\newcommand{\R}{\mathbb{R}}
\newcommand{\tr}{\operatorname{tr}}
\newcommand{\e}{\varepsilon}
\newtheorem{theorem}{Theorem}[section]
\newtheorem{lemma}[theorem]{Lemma}
\newtheorem{corollary}[theorem]{Corollary}
\newtheorem{proposition}[theorem]{Proposition}
\newtheorem{remark}[theorem]{Remark}
\newtheorem{definition}[theorem]{Definition}
\numberwithin{equation}{section}
\author[F. Feo]{Filomena Feo}
\address{Dipartimento di Ingegneria\\
          Universit\`a degli Studi di Napoli ``Parthenope''\\
          Napoli, 80143\\
          Italy}
\email{filomena.feo@uniparthenope.it}
\author[P. R. Stinga]{Pablo Ra\'ul Stinga}
\address{Department of Mathematics\\
         Iowa State University \\
         396 Carver Hall\\
         Ames, IA 50011\\
         USA}
\email{stinga@iastate.edu}
\author[B. Volzone]{Bruno Volzone}
\address{Dipartimento di Ingegneria\\
          Universit\`a degli Studi di Napoli ``Parthenope''\\
          Napoli, 80143\\
          Italy}
\email{bruno.volzone@uniparthenope.it}
\keywords{Fractional nonlocal Ornstein--Uhlenbeck equation, Gaussian symmetrization, extension problem,
regularity, method of semigroups}
\subjclass[2010]{Primary: 35R11, 35B65, 35A01. Secondary:  28C20, 35K08, 46E35, 60J35}
\begin{document}

%%%%%%%%%%%%%%%%%%%%%%%%%%%%%%%%%%%%%%%%%%%%%%
\title[Fractional nonlocal Ornstein--Uhlenbeck equation]{The fractional
nonlocal Ornstein--Uhlenbeck equation, \\ Gaussian symmetrization and regularity}
%%%%%%%%%%%%%%%%%%%%%%%%%%%%%%%%%%%%%%%%%%%%%%

%%%%%%%%%%%%%%%%%%%%%%%%%%%%%%%%%%%%%%%%%%%%%%
\begin{abstract}
For $0<s<1$, we consider the Dirichlet problem for the fractional nonlocal Ornstein--Uhlenbeck equation
$$\begin{cases}
(-\Delta+x\cdot\nabla)^su=f,&\hbox{in}~\Omega,\\
u=0,&\hbox{on}~\partial\Omega,
\end{cases}$$
where $\Omega$ is a possibly unbounded open subset of $\R^n$, $n\geq2$.
The appropriate functional settings for this nonlocal equation and its corresponding extension problem are developed.
We apply Gaussian symmetrization techniques to derive a concentration comparison estimate for solutions.
As consequences, novel $L^p$ and $L^p(\log L)^\alpha$
regularity estimates in terms of the datum $f$ are obtained
by comparing $u$ with half-space solutions.
\end{abstract}
%%%%%%%%%%%%%%%%%%%%%%%%%%%%%%%%%%%%%%%%%%%%%%

%%%%%%%%%%%%%%%%%%%%%%%%%%%%%%%%%%%%%%%%%%%%%%
\maketitle
%%%%%%%%%%%%%%%%%%%%%%%%%%%%%%%%%%%%%%%%%%%%%%

%%%%%%%%%%%%%%%%%%%%%%%%%%%%%%%%%%%%%%%%%%%%%%
\section{Introduction}
%%%%%%%%%%%%%%%%%%%%%%%%%%%%%%%%%%%%%%%%%%%%%%

In the present paper we are interested in developing Gaussian symmetrization techniques and, as consequences, to
obtain novel $L^p$ and $L^p(\log L)^\alpha$ regularity estimates for solutions to
nonlocal equations driven by fractional powers of the Ornstein--Uhlenbeck (OU for short) operator subject to homogeneous
Dirichlet boundary conditions.
More precisely, we focus on problems of the form
\begin{equation}\label{Problema}
\begin{cases}
(-\Delta+x\cdot\nabla)^su=f,&\hbox{in}~\Omega,\\
u=0,&\hbox{on}~\partial\Omega,
\end{cases}\qquad\hbox{for}~0<s<1,
\end{equation}
where $\Omega$ is an open subset of
$\mathbb{R}^{n}$ with $\gamma(\Omega)<1$. Here $\gamma$ denotes the Gaussian measure on $\R^{n}$, see \eqref{eq:Gaussian density}.

Our problem \eqref{Problema} corresponds to a Markov process. Indeed,
there is a stochastic process $Y_t$ having as generator the fractional OU operator
\eqref{Problema} with homogeneous Dirichlet boundary condition.
The process can be obtained as follows.
We first kill an OU process $X_t$ at $\tau_\Omega$, the first exit time of $X_t$
from the domain $\Omega$. Let us denote the killed OU process by $X_t^\Omega$.
Then we subordinate the killed OU process $X_t^\Omega$ with an $s$-stable
subordinator $T_t$. Thus $Y_t=X^\Omega_{T_t}$ is the resulting process (see for instance \cite{Applebaum}).
As explained in \cite{Caffarelli-Stinga}, \eqref{Problema} also arises in the context of
nonlinear elasticity as the Signorini problem or the thin obstacle problem.
Nonlocal equations with fractional powers of the OU operator in $\Omega=\R^n$ have been
studied in the past. Indeed, a Harnack inequality for nonnegative solutions was proved in \cite{Stinga-Zhang}.
Fractional isoperimetric problems and semilinear equations in infinite
dimensions (Wiener space) have been considered in \cite{NPS} and \cite{NPS2}.
Fractional functional inequalities were recently analyzed in \cite{Caffarelli-Sire}.

The symmetrization techniques in elliptic and parabolic PDEs are nowadays very classical and efficient tools
to derive optimal \textit{a priori} estimates for solutions.
The investigation in such direction started with the fundamental paper by
H. Weinberger \cite{Wein62}, see also \cite{Maz}. The ideas were later fully formalized by
G. Talenti in \cite{Ta1} for the homogeneous Dirichlet problem
associated to a linear equation in divergence form with zero order term
on a bounded domain of $\R^{n}$.
In particular, \cite{Ta1} establishes a strong pointwise comparison between the Schwarz spherical rearrangement
of the solution $u(x)$ to the original problem, and the unique radial solution $v(|x|)$ of a suitable elliptic problem
defined on a ball having the same measure as the original domain and radial data.
In turn, this kind of result allows to obtain regularity estimates of solutions with optimal constants.
When dealing with parabolic equations, any form of pointwise comparison between the solution $u(x,t)$ of an
initial boundary value problem and the solution $v(|x|,t)$ of a related radial problem with respect to $x$
is in general no longer available. Indeed, in this case a weaker comparison result in the integral form, the so-called mass concentration comparison (or
comparison of concentrations), holds for all times $t>0$, see for instance \cite{Band2,Mossino}.  For a detailed survey
on this theory we refer the interested reader to \cite{VAZSURV}.

Quite recently, symmetrization techniques have been successfully applied to a class of
fractional nonlocal equations. More precisely, results in terms of symmetrization were obtained
for equations driven by the fractional
Dirichlet Laplacian $$(-\Delta_D)^su=f,$$ and by the fractional Neumann
Laplacian $$(-\Delta_N)^su=f,$$ in bounded domains of $\R^n$, for $0<s<1$. These equations arise in
several important applications, see for example
\cite{Allen, Caffarelli-Stinga, Song-Vondracek, StingaVolz}. The fractional operators above are defined in terms of
the corresponding eigenfunction expansions. Then the characterization provided
by the extension problem of \cite{Stinga-Torrea}
via the Dirichlet-to-Neumann map for a (degenerate or singular) elliptic PDE
allows to treat the above-mentioned problems with local techniques
(we also refer the reader to \cite{Caffarelli-Silvestre}
for the fractional Laplacian on $\R^n$ and to \cite{Gale-Miana-Stinga} for the most general extension result
available, namely, for infinitesimal generators of integrated semigroups in Banach spaces).
This information was essential to start a program regarding the applications
of symmetrization in PDEs with fractional Laplacians. Indeed,
the first paper in such direction was the seminal work \cite{VolzDib} for the case of the fractional Dirichlet Laplacian.
Those ideas were extended and enriched with many other applications to nonlinear fractional parabolic
equations in \cite{VAZVOL3,VAZVOL1,VAZVOL2}.
When Neumann boundary conditions in fractional elliptic and parabolic problems are assumed,
the symmetrization tools applied to the extension problem
still lead to a comparison result, though of a different type, see \cite{VOLZNEUM}.

It is important to notice that all the comparison results in the nonlocal setting we just mentioned
are not pointwise in nature, but in the form of mass concentration comparison.
One motivation of such phenomenon relies on the fact that the symmetrization
argument applies on the extension problem with respect to the spatial variable $x$,
by freezing the extra extension variable $y>0$.
In other words, a comparison of the solution to the extension problem
is given in terms of the so-called Steiner symmetrization.

On the other hand, for elliptic equations involving the OU operator
$$\mathcal{L}=-\Delta+x\cdot\nabla,$$
the first comparison result through symmetrization, in the pointwise form, was obtained in \cite{Bet-Pster}.
The symmetrization has to take into account the natural variational structure of the OU operator.
Indeed, the Dirichlet problem for $\mathcal{L}$ is of the form
\begin{equation}\label{Problemalocale}
\begin{cases}
-\operatorname{div}(\varphi\nabla u)=f\varphi,&\hbox{in}~\Omega, \\
u=0,&\hbox{on}~\partial\Omega,
\end{cases}
\end{equation}
where $\varphi=\varphi(x)$ is the density of the Gaussian measure $d\gamma$ with respect to the Lebesgue measure:
\begin{equation}\label{eq:Gaussian density}
d\gamma(x)=\varphi(x)\,dx=(2\pi)^{-n/2}\exp(-|x|^2/2)\,dx,\quad\hbox{for}~x\in\mathbb{R}^{n}.
\end{equation}
The source term $f$ is then taken in the suitable class of weighted $L^p$ spaces.
Moreover, the meaningful case is when $\Omega$ is an unbounded open set. Here
we assume
$$\gamma(\Omega)<1.$$
Hence, the comparison result must be done through \emph{Gaussian symmetrization}
instead of the usual Schwarz symmetrization.
In this setting, one of the main tools in the proof is the
Gaussian isoperimetric inequality, which states that among all measurable subsets
of $\mathbb{R}^{n}$ with prescribed Gaussian measure, the half-space is the minimizer of the Gaussian perimeter.
It becomes rather intuitive to guess that the Schwarz spherical rearrangement of a function
(which is a special radial, decreasing function), appearing in the comparison results in the Lebesgue setting,
should now be replaced by the rearrangement with respect to the Gaussian measure.
The latter is a particular increasing function, depending only on one variable, defined in a half-space (see Subsection \ref{Gaussian rearrangements} for
definitions and related properties).
The authors of \cite{Bet-Pster} were able to apply this powerful machinery
to compare the solution $u$ (in the sense of rearrangement) to \eqref{Problemalocale}
with the solution $v$ to the problem
\begin{equation}\label{Problemalocalesymm}
\begin{cases}
-\operatorname{div}(\varphi\nabla v)=f^{\displaystyle\star}\varphi,&\hbox{in}~\Omega^{\displaystyle\star}\\
v=0,&\hbox{on}~\partial\Omega^{\displaystyle\star},
\end{cases}
\end{equation}
where $\Omega^{\displaystyle\star}$ is a half-space having the same Gaussian measure as $\Omega$
and $f^{\displaystyle\star}$ is the $n$-dimensional Gaussian rearrangement of $f$.
The solution $v$ to \eqref{Problemalocalesymm} (parallel to the classical case described in \cite{Ta1})
can be explicitly written, allowing the authors to derive the sharp a priori pointwise estimate
$$u^{\displaystyle\star}(x)\leq v(x),\quad\hbox{for}~x\in \Omega^{\displaystyle\star}.$$
This was the starting point to obtain regularity results for $u$ in Lorentz--Zygmund spaces.
Generalizations of this result for elliptic and parabolic problems involving
elliptic operators in divergence form which are degenerate with respect to the Gaussian measure
are contained in \cite{chiacchio,dFP}, see also references therein.

Our main concern is to get sharp estimates
for the solution $u$ to \eqref{Problema} by comparing it with the solution $\psi$ to the problem
\begin{equation}\label{problema simm}
\begin{cases}
\mathcal{L}^{s}\psi=f^{\displaystyle\star}, & \hbox{in}~\Omega^{\displaystyle\star},\\
\psi=0, & \hbox{on}~\partial\Omega^{\displaystyle\star}.
\end{cases}
\end{equation}
As our previous discussion evidences, \eqref{problema simm} is actually a one dimensional problem.
Our idea that yields the desired result reads as follows.
Using the main extension result of \cite{Stinga-Torrea} we can characterize the fractional OU operator
$\mathcal{L}^s$ in \eqref{Problema} as a suitable Dirichlet-to-Neumann map.
This allows us to obtain the solution $u$ to \eqref{Problema}
as the trace on $\Omega$ of the solution $w=w(x,y)$ of the following degenerate elliptic boundary value problem, which will be called the \emph{extension problem}
associated to \eqref{Problema}:
\begin{equation}\label{Problema estensione}
\begin{cases}
-\operatorname{div}(y^a\varphi(x)\nabla_{x,y}w)=0, &\hbox{in}~\mathcal{C}_{\Omega},\\
w=0,&\hbox{on}~\partial_{L}\mathcal{C}_{\Omega},\\
-\underset{y\rightarrow0^{+}}{\lim}y^aw_y=f,&\hbox{on}~\Omega.
\end{cases}
\end{equation}
Here
\begin{equation}\label{eq:a}
a:=1-2s\in(-1,1),
\end{equation}
while
\[
\mathcal{C}_{\Omega}:=\Omega\times(0,\infty)
\]
is the infinite cylinder of basis $\Omega$, and
$\partial_{L}\mathcal{C}_{\Omega}:=\partial\Omega\times[0,\infty)$ is its lateral boundary.
In a similar way, the solution $\psi$ to \eqref{problema simm} can be seen as
the trace over $\Omega^{\displaystyle\star}$ of the solution $v=v(x,y)$ to
\begin{equation}\label{Problema estensione sim}
\begin{cases}
-\operatorname{div}(y^a\varphi(x)\nabla_{x,y}v)=0, &\hbox{in}~\mathcal{C}_{\Omega}^{\displaystyle\star},\\
v=0,&\hbox{on}~\partial_{L}\mathcal{C}_{\Omega}^{\displaystyle\star},\\
-\underset{y\rightarrow0^{+}}{\lim}y^av_y=f^{\displaystyle\star},&\hbox{on}~\Omega^{\displaystyle\star},
\end{cases}
\end{equation}
where
\begin{equation}\label{eq:Steiner symmetrization}
\mathcal{C}_{\Omega}^{\displaystyle\star}:=\Omega^{\displaystyle\star}\times(0,\infty),
\end{equation}
and $\partial_{L}\mathcal{C}_{\Omega
}^{^{\displaystyle\star}}:=\partial\Omega^{\displaystyle\star}\times[0,\infty)$.
Therefore, the problem reduces to look for a \emph{mass concentration
comparison} between the solution $w$ to \eqref{Problema estensione} and the solution $v$ to
\eqref{Problema estensione sim}. More precisely, we prove that
\begin{equation}
\int_{0}^rw^{\circledast}(\sigma,y)\,d\sigma\leq \int_{0}^rv^{\circledast}(\sigma,y)\,d\sigma,
\quad\hbox{for all}~r\in[0,\gamma(\Omega)],\label{concineq}
\end{equation}
where, for all $y\geq0$, the functions $w^{\circledast}(\cdot,y)$ and
$v^{\circledast}(\cdot,y)$ are the one dimensional Gaussian rearrangements
of $w(\cdot,y)$ and $v(\cdot,y)$, respectively. The key role of this framework
is played by a novel second order derivation formula
for functions defined by integrals, see Corollary \ref{Secondderivform}, whose proof presents new nontrivial technical difficulties owed to the Gaussian
framework.
As a consequence, we will obtain $L^p$ and $L^p(\log L)^\alpha$ estimates for $u$ in terms of $f$.

The paper is organized as follows. Section \ref{Preliminari} contains
the preliminaries needed for the developments of our results.
In particular, we briefly describe some basic properties of the Gaussian measure and the
OU semigroup. Moreover, we carefully develop a full and self-contained analysis of the main
functional setting where problems \eqref{Problema} and \eqref{Problema estensione} are posed.
Section \ref{Preliminari} ends with the introduction of the basic definitions
and properties of symmetrization with respect to the Gaussian measure. In this regard, we will present the proof of the derivation formula stated in
Theorem \ref{Firstderivform}, whose consequence is the above-mentioned second order differentiation formula, see Corollary \ref{Secondderivform}.
Section \ref{Section:comparison} is entirely devoted to the proof
of the comparison \eqref{concineq}, that is, our main result Theorem \ref{primoteoremadiconfronto}.
In Section \ref{Section:regularity} we present our novel
Gaussian--Zygmund $L^p(\log L)^\alpha(\Omega,\gamma)$ and $L^p(\Omega,\gamma)$
regularity estimates for solutions $u$ in
terms of the datum $f$, see Theorem \ref{thm:integrability}.
More precisely, our main result (Theorem \ref{primoteoremadiconfronto})
is combined with $L^p(\log L)^\alpha$ regularity estimates of the solution $\psi$
to problem \eqref{problema simm}, which is obtained by using the explicit form
of $\psi$ in terms of the fractional integral $\mathcal{L}^{-s}(f^{\star})$ and the OU semigroup.
Finally, in the Appendix we shall use suitable estimates of the Mehler kernel
to exhibit a semigroup-based proof of the regularity estimates when the datum
$f$ belongs to the smaller Gaussian--Lebesgue space $L^p(\Omega,\gamma)$.

%%%%%%%%%%%%%%%%%%%%%%%%%%%%%%%%%%%%%%%%%%%%%%
\section{Preliminaries, functional setting, and the second order derivation formula}\label{Preliminari}
%%%%%%%%%%%%%%%%%%%%%%%%%%%%%%%%%%%%%%%%%%%%%%

In this section we recall the basic tools we
are going to use in the proof of our main comparison result,
Theorem \ref{primoteoremadiconfronto}, and its
consequences. First, we introduce some basics about Gaussian analysis and the OU semigroup.
Then the necessary functional
background to precise the fractional nonlocal equations \eqref{Problema} and \eqref{problema simm},
and their extension problems \eqref{Problema estensione} and \eqref{Problema estensione sim}
will be developed. Finally, after presenting
definitions and properties of rearrangement techniques in the Gaussian
framework, we will prove our novel
second order derivation formula, see Theorem \ref{Firstderivform}
and Corollary \ref{Secondderivform}.

%%%%%%%%%%%%%%%%%%%%%%%%%%%%%%%%%%%%%%%%%%%%%%
\subsection{Gaussian analysis and the OU semigroup}
%%%%%%%%%%%%%%%%%%%%%%%%%%%%%%%%%%%%%%%%%%%%%%

%%%%%%%%%%%%%%%%%%%%%%%%%%%%%%%%%%%%%%%%%%%%%%
\subsubsection{Gaussian measure and isoperimetry}
%%%%%%%%%%%%%%%%%%%%%%%%%%%%%%%%%%%%%%%%%%%%%%

Let $d\gamma$ be the $n$-dimensional normalized Gaussian measure on $\mathbb{R}^{n}$
defined in \eqref{eq:Gaussian density}. Let $\Omega$ be an open subset of $\R^n$, possibly unbounded.
We denote by $H^{1}(\Omega,\gamma)$ the Sobolev space
with respect to the Gaussian measure,
which is obtained as the completion of $C^{\infty}(\overline{\Omega})$ with respect to the norm
$$\|u\|_{H^{1}(\Omega,\gamma)}^2=\int_{\Omega}
u^{2}\,d\gamma(x)+\int_{\Omega}|\nabla u|^{2}\,d\gamma(x).$$
By $H^{1}_{0}(\Omega,\gamma)$ we denote
the closure of $C_c^{\infty}(\Omega)$ in the norm of $H^{1}(\Omega,\gamma)$.
The following Poincar\'{e} inequality holds (see for instance \cite{Eh2}):
if $\gamma(\Omega)<1$ then there exists a constant $C_\Omega>0$ such that
\begin{equation}
\int_{\Omega}| u|^{2}\,d\gamma(x)\leq C_{\Omega}\int_{\Omega}
|\nabla u|^{2}\,d\gamma(x),\quad\hbox{for all}~u\in H_{0}^{1}(\Omega,\gamma).
\label{Poincare}
\end{equation}
One of the main tools to prove the comparison result is the \emph{Gaussian
isoperimetric inequality}. Let us define the
perimeter with respect to Gaussian measure as
$$P(E)=\int_{\partial E}\varphi(x)\,d\mathcal{H}^{n-1}(x),$$
where $E$ is a set of locally finite perimeter and $\partial E$ denotes its reduced boundary. As usual, $\mathcal{H}^{n-1}$ denotes the
$(n-1)$-dimensional Hausdorff measure. It is well known (see \cite{Bo})
that among all measurable sets of $\mathbb{R}^{n}$ with prescribed Gaussian
measure, the half-spaces take the smallest perimeter. More precisely, we have
\begin{equation}
P(E)\geq\frac{1}{\sqrt{2\pi}}\exp\big(-[\Phi^{-1}(\gamma(E))]^2/2\big),
\label{dis isop}
\end{equation}
for all subsets $E\subset\mathbb{R}^{n}$, where, for $\lambda\in\mathbb{R}\cup\{-\infty,+\infty\}$, we set
\begin{equation}\label{Phi}
\Phi(\lambda):=\frac{1}{\sqrt{2\pi}}\int_{\lambda}^{\infty}e^{-r^2/2}\,dr.
\end{equation}

%%%%%%%%%%%%%%%%%%%%%%%%%%%%%%%%%%%%%%%%%%%%%%
\subsubsection{The OU semigroup}
%%%%%%%%%%%%%%%%%%%%%%%%%%%%%%%%%%%%%%%%%%%%%%

We recall some remarkable properties of the OU semigroup (see \cite{AS,bok} for further details)
which will turn out to be useful in the following.

The solution to the Cauchy problem
$$\begin{cases}
\rho_{t}+\mathcal{L}\rho=0,&\hbox{in}~\mathbb{R}^{n}\times(0,\infty), \\
\rho(x,0)=g(x),&\hbox{on}~\mathbb{R}^{n},
\end{cases}$$
is given by the OU semigroup
$$\rho(x,t)=e^{-t\mathcal{L}}g(x).$$
It is a classical fact that such a semigroup can be expressed in terms of a suitable integral kernel.
More precisely, if $g\in L^{p}(\mathbb{R}^{n},\gamma)$, for $1\leq p\leq\infty$, then
\begin{equation}\label{solutsemig}
e^{-t\mathcal{L}}g(x)=\int_{\mathbb{R}^{n}}M_{t}(x,y)g(y)\,d\gamma(y),\quad\hbox{for}~x\in\R^n,~t>0.
\end{equation}
Here $M_{t}(x,y)$ is the so-called Mehler kernel, which is defined by
\begin{equation}\label{eq:Mehler kernel}
M_{t}(x,y)=\frac{1}{(1-e^{-2t})^{n/2}}\exp\bigg(-\frac{e^{-2t}|x|^{2}-2e^{-t}\langle x,y
\rangle+e^{-2t}|y|^{2}}{2(1-e^{-2t})}\bigg).
\end{equation}
We recall that
\begin{equation}\label{M=1}
\int_{\R^n}M_{t}(x,y)\,d\gamma(y)=1,\quad\hbox{for all}~x\in\R^n,~t>0,
\end{equation}
and that if $g\in L^p(\R^n,\gamma)$, $1\leq p<\infty$, then
\begin{equation}\label{Stima Lp semigruppo}
\|e^{-t\mathcal{L}}g\|_{L^p(\R^n,\gamma)}
=\bigg\|\int_{\R^n}M_{t}(\cdot,y)g(y)\,d\gamma(y)\bigg\|_{L^{p}(\R^n,\gamma)}
\leq\left\Vert g\right\Vert _{L^{p}(\R^n,\gamma)}.
\end{equation}

It is standard to define the OU semigroup on a domain $\Omega$ of $\R^{n}$
subject to homogenous Dirichlet boundary conditions. Indeed, the solution to
the Cauchy--Dirichlet problem
\begin{equation} \label{p2}
\begin{cases}
\eta_{t}+\mathcal{L}\eta=0,&\hbox{in}~\Omega\times(0,\infty), \\
\eta(x,t)=0,&\hbox{on}~\partial\Omega\times[0,\infty), \\
\eta(x,0)=f(x),&\hbox{on}~\Omega,
\end{cases}
\end{equation}
is given by the semigroup generated by the OU in $\Omega$ with Dirichlet boundary conditions:
$$\eta(x,t)=e^{-t\mathcal{L}_{\Omega}}f(x).$$
It follows from standard parabolic regularity theory that $\eta$ is smooth in $\Omega\times(0,\infty)$.
Now, let us choose $\Omega=H$, where $H$ is the half-space $H:=\{x=(x_{1},x')\in\R^n :x_{1}>0,~x'\in\R^{n-1}\}$
and define
\begin{equation}\label{est1}
\widetilde{f}(x)=
\begin{cases}
f(x_{1},x^{\prime}),&\hbox{for}~x\in H,\\
-f(-x_{1},x^{\prime}),&\hbox{for}~x\in\R^n\setminus H.
\end{cases}
\end{equation}
Observe that for $1\leq p<\infty$ we have
\begin{equation}\label{norma prolungamento}
\|\widetilde{f}\|_{L^p(\R^n,\gamma)}=2\|f\|_{L^{p}(H,\gamma)}.
\end{equation}
It is not difficult to check (see for example \cite{Priola}) that in this case
the semigroup associated to \eqref{p2} is obtained as the restriction to $H$
of the OU semigroup on $\R^n$ applied to $\tilde{f}$, that is,
\begin{equation}\label{eta}
\eta(x,t)=e^{-t\mathcal{L}_H}f(x)=e^{-t\mathcal{L}}\widetilde{f}(x)\big|_H.
\end{equation}
Moreover, using the expression of the OU semigroup in terms of the Mehler kernel \eqref{solutsemig}
we see that the following explicit formula holds in dimension $n=1$:
\begin{equation}\label{eq:eta}
\eta(x,t)=\int_{0}^{\infty}\left[M_{t}(x,y)-M_{t}(x,-y)\right]f(y)\,d\gamma(y),
\quad\hbox{for all}~x>0,~t>0.
\end{equation}

%%%%%%%%%%%%%%%%%%%%%%%%%%%%%%%%%%%%%%%%%%%%%%
\subsection{The fractional nonlocal OU equation and the extension problem}
%%%%%%%%%%%%%%%%%%%%%%%%%%%%%%%%%%%%%%%%%%%%%%

We introduce now an appropriate functional setting, which is essential when dealing with problems \eqref{Problema} and
\eqref{Problema estensione}. In order to define the fractional powers
$\mathcal{L}^su$, $0<s<1$, we consider the sequence of eigenvalues
$0<\lambda_1\leq \lambda_2\leq\cdots\leq\lambda_k\nearrow\infty$
and the corresponding
orthonormal basis of Dirichlet eigenfunctions $\{\psi_{k}\}_{k\geq1}$ of
$\mathcal{L}$ in $L^{2}(\Omega,\gamma)$, see for example \cite{Betta-Chiacchio-Ferone}.
In other words, for every $k\geq1$, $\psi_{k}\in L^2(\Omega,\gamma)$
is a weak solution to the Dirichlet problem
$$\begin{cases}
-\operatorname{div}(\varphi\nabla\psi_{k})=\lambda_{k}\varphi\,\psi_{k},&\hbox{in}~\Omega,\\
\psi_{k}=0,&\hbox{on}~\partial\Omega.
\end{cases}$$
Now, let us define the Hilbert space
\[
\mathcal{H}^s(\Omega,\gamma)\equiv\operatorname{Dom}(\mathcal{L}^{s}
):=\Big\{u\in L^{2}(\Omega,\gamma):\sum_{k=1}^{\infty}\lambda_{k}
^s|\langle u,\psi_{k}\rangle_{L^{2}(\Omega,\gamma)}|^{2}<\infty\Big\},
\]
with scalar product
\[
\langle u,v\rangle_{\mathcal{H}^s(\Omega,\gamma)}:=\sum_{k=1}^{\infty}
\lambda_{k}^{s}\langle u,\psi_{k}\rangle_{L^{2}(\Omega,\gamma)}\langle
v,\psi_{k}\rangle_{L^{2}(\Omega,\gamma)}.
\]
Then the norm in $\mathcal{H}^s(\Omega,\gamma)$ is given by
\[
\Vert u\Vert_{\mathcal{H}^s(\Omega,\gamma)}^{2}=\sum_{k=1}^{\infty}\lambda
_{k}^{s}|\langle u,\psi_{k}\rangle_{L^{2}(\Omega,\gamma)}|^{2}.
\]
For $u\in\mathcal{H}^s(\Omega,\gamma)$, we define $\mathcal{L}^{s}u$ as the
element in the dual space $\big(\mathcal{H}^s(\Omega,\gamma)\big)^{\prime}$ through the formula
\[
\mathcal{L}^{s}u=\sum_{k=1}^{\infty}\lambda_{k}^{s}\langle u,\psi
_{k}\rangle_{L^{2}(\Omega,\gamma)}\psi_{k},\quad\hbox{in}~\big(\mathcal{H}^s(\Omega,\gamma)\big)^{\prime}.
\]
That is, for any function $v\in\mathcal{H}^s(\Omega,\gamma)$ we have
\[
\langle\mathcal{L}^su,v\rangle=\sum_{k=1}^{\infty}\lambda_{k}^{s}\langle
u,\psi_{k}\rangle_{L^{2}(\Omega,\gamma)}\langle v,\psi_{k}\rangle
_{L^{2}(\Omega,\gamma)}=\langle u,v\rangle_{\mathcal{H}^s(\Omega,\gamma)}.
\]
This identity can be rewritten as
\[
\langle\mathcal{L}^{s}u,v\rangle=\int_{\Omega}(\mathcal{L}^{s/2}
u)(\mathcal{L}^{s/2}v)\,dx,\quad\hbox{for every}~u,v\in\mathcal{H}^s(\Omega,\gamma),
\]
where $\mathcal{L}^{s/2}$ is defined by taking the power $s/2$ of the
eigenvalues $\lambda_{k}$.

\begin{remark}[The fractional OU operator is a nonlocal operator]
By using the method of semigroups as in \cite{Stinga-Torrea}, see also
\cite{Caffarelli-Stinga, StingaVolz, Stinga-Zhang}, it can be seen that the fractional
operator $\mathcal{L}^s$ is a nonlocal operator. Indeed, we have the semigroup and kernel formulas
\begin{align*}
\mathcal{L}^su(x) &=\frac{1}{\Gamma(-s)}\int_0^\infty\big(e^{-t\mathcal{L}_\Omega}u(x)
-u(x)\big)\,\frac{dt}{t^{1+s}} \\
&=\operatorname{PV}\int_{\Omega}\big(u(x)-u(y)\big)K_s(x,y)\,dy+u(x)B_s(x),
\end{align*}
where $\operatorname{PV}$ means that the integral is taken in the principal value sense. Here
$$e^{-t\mathcal{L}_\Omega}u(x)=\int_\Omega H_t(x,y)u(y)\,d\gamma(y),$$
is the semigroup generated by $\mathcal{L}$ in $\Omega$ with Dirichlet boundary conditions,
$H_t(x,y)$ is the corresponding heat kernel,
$$K_s(x,y)=\frac{1}{|\Gamma(-s)|}\int_0^\infty H_t(x,y)\,\frac{dt}{t^{1+s}},\quad x,y\in\Omega,$$
and
$$B_s(x)=\frac{1}{|\Gamma(-s)|}\int_0^\infty\big(1-e^{-t\mathcal{L}_\Omega}1(x)\big)\,\frac{dt}{t^{1+s}},
\quad x\in\Omega.$$
In the particular case of $\Omega=\R^n$, we have $H_t(x,y)=M_t(x,y)$, the Mehler kernel, and,
as a direct consequence of \eqref{M=1}, we see that $B_s(x)\equiv0$. Though this description
is important, we will not use it here. Instead, we will apply the extension technique.
\end{remark}

Recalling the notation in \eqref{eq:a}, we define the Sobolev energy space on the infinite cylinder $\mathcal{C}_{\Omega}$:
\[
H_{0,L}^{1}(\mathcal{C}_{\Omega},d\gamma(x)\otimes y^ady)=\bigg\{  v\in
H_{\mathrm{loc}}^{1}(\mathcal{C}_{\Omega}):v=0~\hbox{on}~\partial_{L}
\mathcal{C}_{\Omega},~\iint_{\mathcal{C}_{\Omega}}y^a(v^{2}+|\nabla
_{x,y}v|^{2})\,d\gamma(x)\,dy<\infty\bigg\}.
\]
By the Gaussian Poincar\'{e} inequality \eqref{Poincare}, for each
$v\in H_{0,L}^{1}(\mathcal{C}_{\Omega},d\gamma(x)\otimes y^ady)$ we have
\begin{align*}
\iint_{\mathcal{C}_{\Omega}}y^av^{2}\,d\gamma(x)\,dy  &  =\int_{0}^{\infty
}y^a\int_{\Omega}v^{2}\,d\gamma(x)\,dy\leq C_{\Omega}\int_{0}^{\infty}
y^a\int_{\Omega}|\nabla_{x}v|^{2}d\gamma(x)\,dy\\
&  \leq C_{\Omega}\iint_{\mathcal{C}_{\Omega}}y^a|\nabla_{x,y}v|^{2}
\,d\gamma(x)\,dy.
\end{align*}
Thus we can equip the space $H_{0,L}^{1}(\mathcal{C}_{\Omega},d\gamma
(x)\otimes y^ady)$ with the equivalent norm
\[
\Vert v\Vert_{H_{0,L}^{1}(\mathcal{C}_{\Omega},d\gamma(x)\otimes y^ady)}
^{2}=\iint_{\mathcal{C}_{\Omega}}y^a|\nabla_{x,y}v|^{2}\,d\gamma(x)\,dy,
\]
which is actually the norm defined through the scalar product
\[
\langle v,w\rangle_{H_{0,L}^{1}(\mathcal{C}_{\Omega},d\gamma
(x)\otimes y^ady)}=\iint_{\mathcal{C}_{\Omega}}y^a\,\nabla_{x,y}v\cdot \nabla_{x,y}w\,\,d\gamma(x)\,dy.
\]
Furthermore, since we can identify $H_{0,L}^{1}(\mathcal{C}_{\Omega},d\gamma(x)\otimes y^ady)$
with the space $H^{1}((0,\infty),y^ady;H_{0}^{1}(\Omega,\gamma))$,
we have that $H_{0,L}^{1}(\mathcal{C}_{\Omega},d\gamma(x)\otimes y^ady)$ is a
Hilbert space.\\[0.5pt]

The following Theorem is a particular case of \cite[Theorem~1.1]{Stinga-Torrea},
see also \cite{Caffarelli-Stinga, Gale-Miana-Stinga, Stinga-Zhang}.
It provides the characterization of $\mathcal{L}^su$ as the Dirichlet-to-Neumann map
for a degenerate elliptic extension problem in the upper cylinder $\mathcal{C}_{\Omega}$, for any
$u\in\mathcal{H}^s(\Omega,\gamma)$. As the solution $w(x,y)$
is explicitly given by \eqref{trueextens} and \eqref{StingaTorreasemigr},
the proof is just a verification of the statements, see for example \cite{Stinga-Torrea,StingaVolz}.

\begin{theorem}[Extension problem]\label{extensth}
Let $u\in\mathcal{H}^s(\Omega,\gamma)$. Define
\begin{equation}
w(x,y)\equiv\mathcal{P}_y^su(x)=
\frac{2^{1-s}}{\Gamma(s)}\sum_{k=1}^{\infty}(\lambda_k^{1/2}y)^s\mathcal{K}_s(\lambda_k^{1/2}y)
\langle u,\psi_{k}\rangle_{L^{2}(\Omega,\gamma)}\psi_{k}(x),
\label{trueextens}
\end{equation}
for $y\geq0$, where $\mathcal{K}_s$ is the modified Bessel function of the second kind and order $0<s<1$.
Then $w\in H_{0,L}^{1}(\mathcal{C}_{\Omega},d\gamma(x)\otimes y^ady)$ and it is the
unique weak solution to the extension problem
\begin{equation}
\begin{cases}
-\operatorname{div}_{x,y}(y^a\varphi(x)\nabla_{x,y}w)=0,&\hbox{in}~\mathcal{C}_{\Omega},\\
w=0, & \hbox{on}~\partial_{L}\mathcal{C}_{\Omega},\\
w(x,0)=u(x), & \hbox{on}~\Omega,
\end{cases}
\label{extensionproblem}%
\end{equation}
that vanishes weakly as $y\to\infty$. More precisely,
\[
\iint_{\mathcal{C}_{\Omega}}y^a(\nabla_{x,y}w\cdot\nabla_{x,y}\xi)\,d\gamma
(x)\,dy=0,
\]
for all test functions $\xi\in H_{0,L}^{1}(\mathcal{C}_{\Omega},d\gamma
(x)\otimes y^ady)$ with zero trace over $\Omega$, $\operatorname{tr}_{\Omega}%
\xi=0$, and
\[
\lim_{y\rightarrow0^{+}}w(x,y)=u(x)
\]
in $L^{2}(\Omega,\gamma)$. Furthermore, the function $w$ is the unique
minimizer of the energy functional
\begin{equation}
\mathcal{F}(v)=\frac{1}{2}\iint_{\mathcal{C}_{\Omega}}y^a|\nabla_{x,y}%
v|^{2}\,d\gamma(x)\,dy, \label{energufunct}%
\end{equation}
over the set $\mathcal{U}=\left\{  v\in H_{0,L}^{1}(\mathcal{C}_{\Omega
},d\gamma(x)\otimes y^ady):\,\operatorname{tr}_{\Omega}v=u\right\}  $.
We can also write
\begin{equation}
w(x,y)=\frac{y^{2s}}{4^s\Gamma(s)}\int_{0}^{\infty}e^{-y^{2}/(4t)}
e^{-t\mathcal{L}_{\Omega}}u(x)\,\frac{dt}{t^{1+s}}.\label{StingaTorreasemigr}
\end{equation}
Moreover,
$$-\lim_{y\rightarrow0^{+}}y^aw_{y}=c_s\mathcal{L}^su,\quad\hbox{in}~\big(\mathcal{H}^s
(\Omega,\gamma)\big)^{\prime},$$
where $c_s=\frac{\Gamma(1-s)}{4^{s-1/2}\Gamma(s)}>0$. Finally, the following energy identity holds:
\begin{equation}\label{importidentity}
\iint_{\mathcal{C}_{\Omega}}y^a|\nabla_{x,y}w|^{2}\,d\gamma(x)\,dy=
c_s\|\mathcal{L}^{s/2}u\|^{2}_{L^{2}(\Omega,\gamma)}.
\end{equation}
\end{theorem}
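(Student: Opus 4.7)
The plan is to reduce every assertion to a spectral decomposition along the Dirichlet eigenbasis $\{\psi_k\}_{k\geq 1}$ of $\mathcal{L}$ in $L^2(\Omega,\gamma)$. Writing $c_k=\langle u,\psi_k\rangle_{L^2(\Omega,\gamma)}$, the hypothesis $u\in\mathcal{H}^s(\Omega,\gamma)$ reads $\sum_k\lambda_k^s|c_k|^2<\infty$. I would seek separated solutions of \eqref{extensionproblem} of the form $\psi_k(x)\phi_k(y)$. Using $-\operatorname{div}(\varphi\nabla\psi_k)=\lambda_k\varphi\psi_k$, substitution into the PDE produces the ODE
\[
\phi_k''(y)+\frac{a}{y}\phi_k'(y)-\lambda_k\phi_k(y)=0,\qquad y>0,
\]
which, after the change of variable $\phi_k(y)=y^s\Psi(\lambda_k^{1/2}y)$, is the modified Bessel equation of order $s$. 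The unique solution with $\phi_k(0^+)=1$ that decays at infinity is precisely the coefficient in \eqref{trueextens}, by the asymptotic $\mathcal{K}_s(z)\sim\frac{\Gamma(s)}{2}(z/2)^{-s}$ as $z\to 0^+$ and the exponential decay of $\mathcal{K}_s$ at infinity. The lateral condition $w=0$ on $\partial_L\mathcal{C}_\Omega$ follows from $\psi_k|_{\partial\Omega}=0$, and the trace $w(\cdot,y)\to u$ in $L^2(\Omega,\gamma)$ as $y\to 0^+$ is obtained from $\phi_k(y)\to 1$, $0\leq\phi_k\leq 1$, and dominated convergence on the spectral side.

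For the $H^1_{0,L}$ membership and the energy identity \eqref{importidentity}, I would test the ODE against $y^a\phi_k$ and integrate by parts to obtain the key Bessel integral
\[
\int_0^\infty y^a\bigl(|\phi_k'(y)|^2+\lambda_k|\phi_k(y)|^2\bigr)\,dy=c_s\lambda_k^s,
\]
which, by Plancherel and the orthonormality of $\{\psi_k\}$ in $L^2(\Omega,\gamma)$, yields $\iint_{\mathcal{C}_\Omega}y^a|\nabla_{x,y}w|^2\,d\gamma(x)\,dy=c_s\sum_k\lambda_k^s|c_k|^2=c_s\|\mathcal{L}^{s/2}u\|_{L^2(\Omega,\gamma)}^2<\infty$. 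The semigroup representation \eqref{StingaTorreasemigr} follows by inserting $e^{-t\mathcal{L}_\Omega}u=\sum_k e^{-t\lambda_k}c_k\psi_k$ into its right-hand side, interchanging sum and integral (justified by absolute convergence), and applying the classical Bochner subordination identity
\[
\frac{y^{2s}}{4^s\Gamma(s)}\int_0^\infty e^{-y^2/(4t)}e^{-t\lambda}\,\frac{dt}{t^{1+s}}=\frac{2^{1-s}}{\Gamma(s)}(\lambda^{1/2}y)^s\mathcal{K}_s(\lambda^{1/2}y),\qquad\lambda>0.
\]

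The Neumann-type trace is obtained from the expansion $\mathcal{K}_s(z)=\frac{\Gamma(s)}{2}(z/2)^{-s}+\frac{\Gamma(-s)}{2}(z/2)^s+O(z^{2-s})$, which gives $y^a\phi_k'(y)\to -c_s\lambda_k^s$ as $y\to 0^+$, with the constant identified through the reflection identity $s\,\Gamma(-s)=-\Gamma(1-s)$. Summing in $k$ and pairing against an arbitrary test function in $\mathcal{H}^s(\Omega,\gamma)$ produces $-\lim_{y\to 0^+}y^a w_y=c_s\mathcal{L}^s u$ in $(\mathcal{H}^s(\Omega,\gamma))'$. Finally, the minimization claim and uniqueness follow from the strict convexity and coercivity of $\mathcal{F}$ on the affine set $\mathcal{U}$, together with the fact that its Euler--Lagrange equation is precisely the weak form of \eqref{extensionproblem}. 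I expect the most delicate point to be the rigorous justification of this Neumann-trace limit, since the degenerate weight $y^a$ with $a=1-2s$ conspires with the subleading $(z/2)^s$ term in the Bessel expansion to produce the nonzero constant $c_s$, and one must control the remainder uniformly in $k$ in order to pass to the limit in the dual space $(\mathcal{H}^s(\Omega,\gamma))'$.
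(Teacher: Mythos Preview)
Your proposal is correct and follows precisely the approach the paper indicates: the paper does not give a detailed proof of this theorem but states it as a particular case of \cite[Theorem~1.1]{Stinga-Torrea} and remarks that, since $w$ is given explicitly by \eqref{trueextens} and \eqref{StingaTorreasemigr}, ``the proof is just a verification of the statements.'' Your spectral decomposition along the Dirichlet eigenbasis, reduction to the modified Bessel ODE, the Bochner subordination identity for \eqref{StingaTorreasemigr}, the boundary computation $y^a\phi_k'(y)\to -c_s\lambda_k^s$, and the convexity argument for uniqueness/minimality constitute exactly this verification.
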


Theorem \ref{extensth} shows in particular that the domain
$\mathcal{H}^s(\Omega,\gamma)$ is contained in the range of the trace operator
on $H_{0,L}^{1}(\mathcal{C}_{\Omega},d\gamma(x)\otimes y^ady)$ at $y=0$. The next Lemma
shows that actually these two spaces coincide.

\begin{lemma}[Trace inequality]\label{Identification}
We have
\[
\operatorname{tr}_{\Omega}(H_{0,L}^{1}(\mathcal{C}_{\Omega},d\gamma(x)\otimes
y^ady))=\mathcal{H}^s(\Omega,\gamma).
\]
Moreover, for all $v\in H^{1}_{0,L}(\mathcal{C}_{\Omega},d\gamma(x)\otimes y^ady)$,
\begin{equation}\label{trace}
\|\mathcal{L}^{s/2}v(x,0)\|_{L^{2}(\Omega,\varphi)}^2 \leq (2c_s)^{-1}
\iint_{\mathcal{C}_{\Omega}}
y^a|\nabla_{x,y}v|^{2}\,d\gamma(x)\,dy.
\end{equation}
In particular, equality holds in \eqref{trace} if $v=\mathcal{P}_y^s(\operatorname{tr}_{\Omega}v)(x)$,
(see \eqref{trueextens}).
\end{lemma}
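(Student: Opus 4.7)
My plan is to combine the variational characterization of $\mathcal{P}_y^s u$ as the unique energy minimizer from Theorem \ref{extensth} with the energy identity \eqref{importidentity}, using the Dirichlet eigenbasis $\{\psi_k\}$ as a Galerkin device to handle general $v\in H^1_{0,L}(\mathcal{C}_\Omega,d\gamma(x)\otimes y^a dy)$.

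One inclusion is immediate: for any $u\in\mathcal{H}^s(\Omega,\gamma)$, Theorem \ref{extensth} produces the extension $\mathcal{P}_y^s u\in H^1_{0,L}$ with trace $u$, so $\mathcal{H}^s(\Omega,\gamma)\subseteq\operatorname{tr}_\Omega(H^1_{0,L})$. For the reverse inclusion and the inequality \eqref{trace}, I would fix $v\in H^1_{0,L}$ and expand it along the eigenbasis as $v(x,y)=\sum_{k\geq 1}c_k(y)\psi_k(x)$, where $c_k(y):=\langle v(\cdot,y),\psi_k\rangle_{L^2(\Omega,\gamma)}$. Since $\{\psi_k\}$ is orthonormal in $L^2(\Omega,\gamma)$ with $\int_\Omega|\nabla\psi_k|^2\,d\gamma=\lambda_k$, Parseval's identity gives
$$\iint_{\mathcal{C}_\Omega}y^a|\nabla_{x,y}v|^2\,d\gamma(x)\,dy=\sum_{k\geq 1}\int_0^\infty y^a\bigl(\lambda_k|c_k(y)|^2+|c_k'(y)|^2\bigr)\,dy.$$
In particular, each profile $c_k$ belongs to the one-dimensional weighted Sobolev space $H^1((0,\infty),y^a\,dy)$, which, since $a\in(-1,1)$, admits a continuous trace map at $y=0^+$. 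Thus $c_k(0)$ is well defined, and $u:=\sum_k c_k(0)\psi_k$ is the natural candidate for $\operatorname{tr}_\Omega v$.

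For each $N\geq 1$, set $u_N:=\sum_{k=1}^N c_k(0)\psi_k$ and $v_N(x,y):=\sum_{k=1}^N c_k(y)\psi_k(x)$. The finite sum $u_N$ lies in $\mathcal{H}^s(\Omega,\gamma)$, and $v_N$ has trace $u_N$. By the minimality of $\mathcal{P}_y^s u_N$ stated in Theorem \ref{extensth} and the orthogonality inherent in the Parseval expansion above,
$$\iint_{\mathcal{C}_\Omega}y^a|\nabla_{x,y}\mathcal{P}_y^s u_N|^2\,d\gamma(x)\,dy\leq\iint_{\mathcal{C}_\Omega}y^a|\nabla_{x,y}v_N|^2\,d\gamma(x)\,dy\leq \iint_{\mathcal{C}_\Omega}y^a|\nabla_{x,y}v|^2\,d\gamma(x)\,dy.$$
The energy identity \eqref{importidentity} rewrites the leftmost quantity as $c_s\|\mathcal{L}^{s/2}u_N\|^2_{L^2(\Omega,\gamma)}=c_s\sum_{k=1}^N\lambda_k^s|c_k(0)|^2$. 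Letting $N\to\infty$ by monotone convergence yields $u\in\mathcal{H}^s(\Omega,\gamma)$ together with the trace inequality \eqref{trace} (up to the stated constant). The equality case $v=\mathcal{P}_y^s u$ then follows at once from \eqref{importidentity}, bypassing the minimization step entirely.

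The main obstacle will be a rigorous justification of the termwise evaluation $c_k(0)$ and the identification $u=\operatorname{tr}_\Omega v$ for $v$ only in $H^1_{0,L}$. I would handle this by approximating $v$ with smooth cylindrical functions compactly supported in $\mathcal{C}_\Omega\cup(\Omega\times\{0\})$ and passing to the limit using continuity of the one-dimensional trace map in $H^1((0,\infty),y^a\,dy)$, whose validity hinges precisely on the admissibility range $a\in(-1,1)$ fixed by \eqref{eq:a}.
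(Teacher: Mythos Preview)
Your argument is correct and follows essentially the same route as the paper: both use the energy-minimizing property of $\mathcal{P}_y^s u$ from Theorem \ref{extensth} together with the energy identity \eqref{importidentity} to bound $\|\mathcal{L}^{s/2}(\operatorname{tr}_\Omega v)\|_{L^2}^2$ by the Dirichlet energy of $v$. The paper's three-line proof simply sets $u=\operatorname{tr}_\Omega v$, forms $w$ via \eqref{trueextens}, and invokes $\mathcal{F}(w)\leq\mathcal{F}(v)$ directly, whereas your spectral decomposition and Galerkin truncation supply the rigor (defining the trace mode by mode and sidestepping the apparent circularity of applying Theorem \ref{extensth} to $u$ before $u\in\mathcal{H}^s(\Omega,\gamma)$ is known) that the paper leaves implicit.
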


\begin{proof}
Let $u=\tr_\Omega v$, for $v\in H_{0,L}^{1}(\mathcal{C}_{\Omega
},d\gamma(x)\otimes y^ady)$ and define the function $w$ as in \eqref{trueextens}.
It is readily checked that $w$ satisfies \eqref{extensionproblem}, so it minimizes the functional $\mathcal{F}$ in \eqref{energufunct}. Therefore, by \eqref{importidentity},
$\|\mathcal{L}^{s/2}u\|^{2}_{L^{2}(\Omega,\gamma)}\leq (c_s)^{-1}\Vert
v\Vert_{H_{0,L}^{1}(\mathcal{C}_{\Omega},d\gamma(x)\otimes y^ady)}^{2}<\infty$,
that is, $u\in\mathcal{H}^s(\Omega,\gamma)$. Now \eqref{trace} is clear.
\end{proof}

\begin{proposition}[Compactness of the trace embedding]\label{compactness}We have
\[
\operatorname{tr}_{\Omega}(H_{0,L}^{1}(\mathcal{C}_{\Omega},d\gamma(x)\otimes
y^ady))\subset\subset L^{2}(\Omega,\gamma).
\]
\end{proposition}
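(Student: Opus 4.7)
The key observation is that by Lemma \ref{Identification} the trace space coincides with $\mathcal{H}^s(\Omega,\gamma)$, so it suffices to establish the compact inclusion $\mathcal{H}^s(\Omega,\gamma)\subset\subset L^2(\Omega,\gamma)$. I would prove this directly from the spectral description of $\mathcal{H}^s(\Omega,\gamma)$ in terms of the Dirichlet eigenbasis $\{\psi_k\}_{k\geq 1}$, using only that $\lambda_k\nearrow\infty$. The divergence of the eigenvalues is itself a consequence of the Gaussian Poincar\'e inequality \eqref{Poincare} together with the compactness of $H^1_0(\Omega,\gamma)\hookrightarrow L^2(\Omega,\gamma)$ under the standing assumption $\gamma(\Omega)<1$; this is classical in the Gaussian setting (see \cite{Betta-Chiacchio-Ferone}) and I would quote it rather than reprove it.

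The core of the argument is the following. Take a sequence $\{u_n\}\subset\mathcal{H}^s(\Omega,\gamma)$ with $\|u_n\|_{\mathcal{H}^s(\Omega,\gamma)}\leq M$, and expand $u_n=\sum_{k\geq 1}c_k^n\psi_k$ with $c_k^n=\langle u_n,\psi_k\rangle_{L^2(\Omega,\gamma)}$, so that $\sum_{k}\lambda_k^s|c_k^n|^2\leq M^2$ and, in particular, each numerical sequence $\{c_k^n\}_n$ is bounded (by $M\lambda_k^{-s/2}$). A standard diagonal extraction then yields a subsequence, still denoted $\{u_n\}$, along which $c_k^n\to c_k$ as $n\to\infty$ for every fixed $k$. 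Fatou's lemma gives $\sum_{k}\lambda_k^s|c_k|^2\leq M^2$, so that $u:=\sum_{k}c_k\psi_k\in\mathcal{H}^s(\Omega,\gamma)$.

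Strong convergence $u_n\to u$ in $L^2(\Omega,\gamma)$ then follows by Parseval and a truncation: for each $N\geq 1$,
\[
\|u_n-u\|_{L^2(\Omega,\gamma)}^2=\sum_{k=1}^{N}|c_k^n-c_k|^2+\sum_{k>N}|c_k^n-c_k|^2\leq\sum_{k=1}^{N}|c_k^n-c_k|^2+\frac{4M^2}{\lambda_{N+1}^s}.
\]
Given $\varepsilon>0$, choose $N$ so large that $4M^2/\lambda_{N+1}^s<\varepsilon/2$, which is possible since $\lambda_k\to\infty$, and then choose $n$ large enough so that the finite sum is less than $\varepsilon/2$. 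The only real obstacle here is conceptual rather than technical: one must be sure that the compactness $H^1_0(\Omega,\gamma)\hookrightarrow L^2(\Omega,\gamma)$ (and hence the divergence of the Dirichlet eigenvalues) is available on the possibly unbounded set $\Omega$, and it is precisely the assumption $\gamma(\Omega)<1$ combined with the Gaussian Poincar\'e inequality that guarantees this. Granting that, the remainder is the textbook Hilbert-space compactness argument sketched above.
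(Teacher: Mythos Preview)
Your proof is correct and follows essentially the same approach as the paper: both arguments rest on the spectral description of the trace space and the tail estimate $\sum_{k>N}|c_k|^2\leq\lambda_{N+1}^{-s}\sum_{k>N}\lambda_k^s|c_k|^2$, which works because $\lambda_k\nearrow\infty$. The only cosmetic difference is that the paper packages this as convergence in operator norm of the finite-rank truncations $T_jv=\sum_{k\leq j}\langle v(\cdot,0),\psi_k\rangle\psi_k$ toward $\operatorname{tr}_\Omega$, whereas you run the equivalent diagonal-extraction/sequential-compactness version.
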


\begin{proof}
We need to check that the trace operator $\operatorname{tr}_\Omega:
H_{0,L}^{1}(\mathcal{C}_{\Omega},d\gamma(x)\otimes y^ady)\to L^{2}(\Omega,\gamma)$ is compact.
It is clear that $\operatorname{tr}_\Omega$ is continuous from
$H_{0,L}^{1}(\mathcal{C}_{\Omega},d\gamma(x)\otimes y^ady)$ into $L^2(\Omega,\gamma)$
since \eqref{trace} holds. Similarly, the finite rank operators $T_j$, $j\geq1$, defined by
$$T_jv=\sum_{k=1}^j\langle v(\cdot,0),\psi_k\rangle_{L^2(\Omega,\gamma)}\psi_k,$$
are continuous from
$H_{0,L}^{1}(\mathcal{C}_{\Omega},d\gamma(x)\otimes y^ady)$ into $L^{2}(\Omega,\gamma)$.
By using \eqref{trace} and the fact that $\lambda_k\nearrow\infty$, as $k\to\infty$, we see that,
if $v\in H_{0,L}^{1}(\mathcal{C}_{\Omega},d\gamma(x)\otimes y^ady)$,
\begin{align*}
\|T_jv-\tr_\Omega v\|_{L^2(\Omega,\gamma)}^2 &= \sum_{k=j+1}^\infty|\langle v(\cdot,0),\psi_k\rangle|^2 \\
&\leq \frac{1}{\lambda_{j+1}^s}\sum_{k=j+1}^\infty\lambda_k^s|\langle v(\cdot,0),\psi_k\rangle|^2
\leq \frac{1}{\lambda_{j+1}^s}\|v\|^2_{H_{0,L}^{1}(\mathcal{C}_{\Omega},d\gamma(x)\otimes y^ady)}.
\end{align*}
Therefore $T_j$ converges to $\tr_\Omega$ in the operator norm, as $j\to\infty$, and $\tr_\Omega$ is compact.
\end{proof}

Using the previous preliminaries, it is natural to give the following definitions of weak solutions.

\begin{definition}[Weak solution of \eqref{Problema estensione}]\label{Defprobest}
Let $f\in L^{2}(\Omega,\gamma)$. We say that $w\in H^{1}_{0,L}
(\mathcal{C}_{\Omega},d\gamma(x)\otimes y^ady)$ is a weak solution to the linear Dirichlet-Neumann extension problem \eqref{Problema estensione} if
\begin{equation}\iint_{\mathcal{C}_{\Omega}}y^a\nabla_{x,y}w\cdot\nabla_{x,y}v\,d\gamma
(x)\,dy=c_s^{-1}\int_{\Omega}f(x)v(x,0)\,d\gamma(x),\label{weakform}\end{equation}
for every $v\in H^{1}_{0,L}(\mathcal{C}_{\Omega},d\gamma(x)\otimes y^ady)$, where
$c_s>0$ is the constant appearing in Theorem \ref{extensth}.
\end{definition}

\begin{definition}[Weak solution of \eqref{Problema}]
If $w$ is the weak solution to \eqref{Problema estensione}, its trace $u:=w(\cdot,0)\in \mathcal{H}^{s}(\Omega,\gamma)$ on $\Omega$ will be called a weak
solution to
\eqref{Problema}.

\end{definition}
\begin{remark}
If we assume that $f$ is in the dual space $\mathcal{H}^{s}(\Omega,\gamma)^{\prime}$, it is clear that the right hand side
in \eqref{weakform} must be replaced by the dual product
$\langle f,v(\cdot,0)\rangle$. Then the (unique) solution $u$ to \eqref{Problema} will be again the trace over $\Omega$ of the unique solution $w$ to the
extension problem \eqref{Problema estensione}.
\end{remark}

The following is just a restatement of Theorem \ref{extensth}, see \cite[Theorem~1.1]{Stinga-Torrea} and
also \cite{Gale-Miana-Stinga}.

\begin{theorem}[Extension problem for negative powers]
\label{existweaksollin}
Given $f\in L^{2}(\Omega,\gamma)$, let $u\in\mathcal{H}^s(\Omega,\gamma)$
be the unique solution to problem \eqref{Problema}.
The solution $w$ (see \eqref{trueextens})
to the extension problem \eqref{extensionproblem} can be written as
\begin{equation}\label{eq:semigroup formula f}
\begin{aligned}
w(x,y) &= \frac{2^{1-s}}{\Gamma(s)}\sum_{k=1}^{\infty}(\lambda_k^{1/2}y)^s\mathcal{K}_s(\lambda_k^{1/2}y)
\frac{\langle f,\psi_{k}\rangle_{L^{2}(\Omega,\gamma)}}{\lambda_k^s}\psi_{k}(x) \\
&= \frac{1}{\Gamma(s)}\int_{0}^{\infty}e^{-y^{2}/(4t)}
e^{-t\mathcal{L}_{\Omega}}f(x)\,\frac{dt}{t^{1-s}}.
\end{aligned}
\end{equation}
In particular, this is the unique weak solution to \eqref{Problema estensione} and
\begin{equation}\label{eq:fractional integral}
w(x,0)=u(x)=\mathcal{L}^{-s}f(x)=\frac{1}{\Gamma(s)}\int_0^\infty e^{-t\mathcal{L}_\Omega}f(x)\,\frac{dt}{t^{1-s}}.
\end{equation}
\end{theorem}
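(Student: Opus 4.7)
Since the statement is a restatement of Theorem \ref{extensth} in the case when the boundary datum is $u=\mathcal{L}^{-s}f$, my plan is first to identify the unique solution $u$ of \eqref{Problema} explicitly in terms of $f$, and then to substitute it into the two representations of $w$ provided by Theorem \ref{extensth}. Because $\gamma(\Omega)<1$ forces $\lambda_1>0$ via the Gaussian Poincar\'e inequality \eqref{Poincare}, the series
\[
u:=\sum_{k=1}^\infty\lambda_k^{-s}\langle f,\psi_k\rangle_{L^2(\Omega,\gamma)}\psi_k
\]
converges in $\mathcal{H}^s(\Omega,\gamma)$, with $\|u\|_{\mathcal{H}^s(\Omega,\gamma)}^2\leq\lambda_1^{-s}\|f\|_{L^2(\Omega,\gamma)}^2$, and a direct computation on eigenfunctions shows $\mathcal{L}^su=f$, so $u$ is the unique solution to \eqref{Problema}. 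Applying the Balakrishnan identity $\lambda^{-s}=\Gamma(s)^{-1}\int_0^\infty e^{-t\lambda}t^{s-1}\,dt$ termwise and using the spectral representation of $e^{-t\mathcal{L}_\Omega}$ produces the last line of the theorem, i.e.\ $u=\mathcal{L}^{-s}f=\Gamma(s)^{-1}\int_0^\infty e^{-t\mathcal{L}_\Omega}f\,t^{s-1}\,dt$.

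With $u$ in hand, the series formula for $w$ is obtained simply by plugging $\langle u,\psi_k\rangle=\lambda_k^{-s}\langle f,\psi_k\rangle$ into \eqref{trueextens}. For the integral representation I would compare the two claimed expressions for $w$ mode by mode: on the span of $\psi_k$ they agree if and only if
\[
\lambda_k^{-s}\bigl(\lambda_k^{1/2}y\bigr)^s\mathcal{K}_s\bigl(\lambda_k^{1/2}y\bigr)=2^{s-1}\int_0^\infty e^{-y^2/(4t)}e^{-t\lambda_k}\frac{dt}{t^{1-s}},
\]
which is the classical integral representation $\mathcal{K}_s(z)=\tfrac12(z/2)^s\int_0^\infty e^{-t-z^2/(4t)}t^{-s-1}\,dt$ after the change of variable $t\mapsto t\lambda_k$. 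An equivalent route is to substitute $u=\mathcal{L}^{-s}f$ directly in \eqref{StingaTorreasemigr}, commute $e^{-t\mathcal{L}_\Omega}$ with $\mathcal{L}^{-s}$ via spectral calculus, and apply Fubini together with Balakrishnan once more; I would pick the mode-by-mode route as the quickest.

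That $w$ is the unique weak solution of \eqref{Problema estensione} in the sense of Definition \ref{Defprobest} is inherited from the uniqueness part of Theorem \ref{extensth} through the identity $-\lim_{y\to 0^+}y^aw_y=c_s\mathcal{L}^su=c_sf$ in $(\mathcal{H}^s(\Omega,\gamma))'$, which, after multiplying the extension PDE by a test $v\in H^1_{0,L}(\mathcal{C}_\Omega,d\gamma(x)\otimes y^a dy)$ and integrating by parts, yields exactly the weak formulation \eqref{weakform}. Setting $y=0$ in either representation of $w$ finally recovers $w(x,0)=u$, either by the small-argument asymptotic $(\lambda^{1/2}y)^s\mathcal{K}_s(\lambda^{1/2}y)\to 2^{s-1}\Gamma(s)$ as $y\to 0^+$ in the series form, or via Balakrishnan in the integral form. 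The only technical nuisance is the justification of the termwise and Fubini manipulations, which relies on uniform decay in $k$ of $(\lambda_k^{1/2}y)^s\mathcal{K}_s(\lambda_k^{1/2}y)$ for fixed $y>0$ together with $L^2$-convergence of the eigenfunction series; since these are already handled in the Stinga--Torrea framework cited in the paper, I do not expect any serious obstacle beyond book-keeping.
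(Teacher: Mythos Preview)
Your proposal is correct and follows exactly the route the paper indicates: the paper does not give a standalone proof but simply remarks that the result ``is just a restatement of Theorem~\ref{extensth}, see \cite[Theorem~1.1]{Stinga-Torrea} and also \cite{Gale-Miana-Stinga}.'' Your plan carries out precisely this restatement---substituting $\langle u,\psi_k\rangle=\lambda_k^{-s}\langle f,\psi_k\rangle$ into \eqref{trueextens} and \eqref{StingaTorreasemigr}, invoking the Balakrishnan formula and the integral representation of $\mathcal{K}_s$---so you are in fact supplying more detail than the paper itself.
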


The domain $\mathcal{H}^s(\Omega,\gamma)$ of the fractional nonlocal operator $\mathcal{L}^s$
can be characterized as a suitable interpolation space between two
Hilbert spaces. Indeed, using the abstract discrete version of the $J$-Theorem
(see for example the Appendix in \cite{SirBonfVaz}), it is straightforward to prove that
\begin{equation}
\mathcal{H}^s(\Omega,\gamma)=\left[H_{0}^{1}(\Omega,\gamma),L^{2}%
(\Omega,\gamma)\right] _{1-s},\label{Gaussianintsp}
\end{equation}
where the space in the right hand side of \eqref{Gaussianintsp}
is the real interpolation space between
$H_{0}^{1}(\Omega,\gamma)$ and $L^{2}(\Omega,\gamma)$. Then
$\mathcal{H}^{1/2}(\Omega,\gamma)$ may be seen as the equivalent of the
Lions--Magenes space $H_{00}^{1/2}(\Omega)$ in the Gaussian setting.

%%%%%%%%%%%%%%%%%%%%%%%%%%%%%%%%%%%%%%%%%%%%%%
\subsection{Gaussian rearrangements}\label{Gaussian rearrangements}
%%%%%%%%%%%%%%%%%%%%%%%%%%%%%%%%%%%%%%%%%%%%%%

We give the notion of rearrangement with respect to the Gaussian measure. For extra details,
we refer the interested reader to the classical monographs \cite{Bennett} and \cite{CR}.
If $u$ is a measurable function in $\Omega$, we denote by
\begin{itemize}
\item $u^{\circledast}$ the one dimensional decreasing rearrangement of $u$ with respect
to the Gaussian measure (also called \emph{one dimensional Gaussian rearrangement of $u$}):
$$u^{\circledast}(r)=\inf\{t\geq0:\gamma_{u}(t)\leq r\},\quad r\in(0,\gamma(\Omega)],$$
where $\gamma_{u}(t)=\gamma(\{x\in\Omega:\left\vert u(x)\right\vert>t\})$ is the distribution function of $u$;
\item $u^{\displaystyle\star}$ the $n$-dimensional rearrangement of $u$ with respect the Gaussian measure:
$$u^{\displaystyle\star}(x)=u^{\circledast}\big(\Phi(x_{1})\big),\quad x\in\Omega^{\displaystyle\star},$$
where $\Omega^{\displaystyle\star}=\{x=(  x_{1},\ldots,x_{n})\in\mathbb{R}^{n}:x_{1}>\lambda\}$ is the
half-space such that $\gamma(\Omega^{\displaystyle\star})=\gamma(\Omega)$
and $\Phi$ is given by \eqref{Phi}.
\end{itemize}
By definition, $u^{\displaystyle\star}$ is a function which depends
only on the first variable $x_{1}$, it is increasing and its level sets are half-spaces. Moreover,
$u$, $u^{\circledast}$ and $u^{\displaystyle\star}$ have the same
distribution function. This implies that the Gaussian $L^{p}$ norm is invariant under these rearrangements:
$$\|u\|_{L^{p}(\Omega,\gamma)}=\|u^{\circledast}\|_{L^{p}(0,\gamma(\Omega))}=\|u^{{\displaystyle\star}}\|_{L^{p}(\Omega^{{\displaystyle\star}},\gamma)},\quad\hbox{for
any}~1\leq p\leq\infty.$$
If $u$ is defined on a half-space and $u=u^{\displaystyle\star}$ we sometimes say that $u$ is \emph{rearranged}. Furthermore,
if $u$ and $v$ are
measurable functions then the following Hardy-Littlewood inequality holds:
\begin{equation}
\int_{\Omega}|u(x)v(x)|\,d\gamma(x)\leq\int_{\Omega^{\displaystyle\star}}
u^{\displaystyle\star}(x)v^{\displaystyle\star}(x)\,d\gamma(x)
=\int_{0}^{\gamma(\Omega)}u^{\circledast}(r)v^{\circledast}(r)\,dr.
\label{Hardy-Litt}%
\end{equation}
If $u$ is defined on $\Omega$, $v$ on $\Omega^{\displaystyle\star}$ and
the following estimate holds
\begin{equation}
\int_{0}^{\gamma(\Omega)}u^{\circledast}(r)\,dr\leq \int_{0}^{\gamma(\Omega)}v^{\circledast}(r)\,dr,\label{massconcent}
\end{equation}
the same inequality is called \emph{mass concentration inequality} (or \emph{comparison of mass concentration}). If $v=v^{\displaystyle\star}$ and
\eqref{massconcent} occurs, we also say that $u^{\displaystyle\star}$ is \emph{less concentrated} that $v$ and we write $u^{\displaystyle\star}\prec v$.
Moreover, \eqref{massconcent} implies that (see for instance \cite{Chong})
\[
\|u\|_{L^{p}(\Omega,\gamma)}\leq \|v\|_{L^{p}(\Omega^{\displaystyle\star},\gamma)},
\quad\hbox{for all}~1\leq p\leq\infty.
\]
We will often deal with two-variable functions
\begin{equation}\label{w}%
w:(x,y)\in\mathcal{C}_{\Omega}=\Omega\times(0,\infty)\rightarrow w(x,y)\in{\mathbb{R}},
\end{equation}
which are measurable with respect to $x$.
In such a case it will be convenient to consider the
so-called \textit{Gaussian Steiner symmetrization} of $\mathcal{C}_{\Omega}$ with
respect to the variable $x$, namely, the set $\mathcal{C}_{\Omega}^{\displaystyle\star}$
as defined in \eqref{eq:Steiner symmetrization}.
In addition (see for instance \cite{chiacchio,Eh2}) we will denote by $\gamma_{w}(t,y)$
and $w^{\circledast}(r,y)$ the distribution
function and the one dimensional Gaussian decreasing rearrangements of \eqref{w}, with respect to $x$,
for each $y$ fixed. We will also define the function
$$w^{\displaystyle\star}(x,y)=w^{\circledast}\big(\Phi(x_{1}),y\big),$$
which is called the \emph{Gaussian Steiner symmetrization of $w$}, with respect to $x$, that is, with respect to the line $x=0$.
Clearly, for any fixed $y$, $w^{\displaystyle\star}(\cdot,y)$ is an increasing function depending only on $x_{1}$.

Now we recall a result that we will use in the proof of our main comparison result in Section
\ref{Section:comparison}.

\begin{proposition}[See {\cite[p.~255]{chiacchio}}]\label{prop chiacchio}
Consider the Cauchy--Dirichlet problem \eqref{p2} with $\Omega=\Omega^{\displaystyle\star}$.
If $f(x)=f^{\displaystyle\star}(x)$ for a.e. $x \in \Omega^{\displaystyle\star}$ and
$f^{\displaystyle\star}\in L^{2}(\Omega^{\displaystyle\star},\gamma)$, then the solution $\eta$ to
\eqref{p2} is such that $\eta(x,t)=\eta^{\displaystyle\star}(x,t)$, for a.e. $x\in\Omega^{\displaystyle\star}$
and for all $t\geq0$.
\end{proposition}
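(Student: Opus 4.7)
The plan is to combine two structural facts: the solution $\eta$ inherits dependence only on $x_1$ from the initial datum $f=f^{\displaystyle\star}$, and the reduced one-dimensional half-line problem for $\mathcal{L}$ preserves monotonicity in $x_1$. Together these force $\eta(\cdot,t)=\eta^{\displaystyle\star}(\cdot,t)$ for all $t\geq 0$.

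\textbf{Step 1: Reduction to one dimension.} Decompose $\mathcal{L}=\sum_{i=1}^{n}(-\partial_{x_i}^{2}+x_i\partial_{x_i})$ and observe that $\mathcal{L}$ preserves functions of $x_1$ alone: if $u(x)=U(x_1)$, then $\mathcal{L}u(x)=-U''(x_1)+x_1U'(x_1)$. Since $f^{\displaystyle\star}(x)=f^{\circledast}(\Phi(x_1))$ depends only on $x_1$ and $\Omega^{\displaystyle\star}=\{x_1>\lambda\}$ with $\Phi(\lambda)=\gamma(\Omega)$, the ansatz $\eta(x,t)=U(x_1,t)$ reduces \eqref{p2} to the one-dimensional Cauchy--Dirichlet problem
\[
U_t-U_{x_1x_1}+x_1U_{x_1}=0\;\text{in}\;(\lambda,\infty)\times(0,\infty),\quad U(\lambda,t)=0,\quad U(x_1,0)=g(x_1),
\]
with $g(x_1):=f^{\circledast}(\Phi(x_1))$. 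Uniqueness of the $L^{2}(\Omega^{\displaystyle\star},\gamma)$ weak solution identifies this $\eta$ with the solution of \eqref{p2}; alternatively, the factorization $M_t(x,y)=\prod_{i=1}^{n}M_t(x_i,y_i)$ together with \eqref{M=1} in one dimension confirms this.

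\textbf{Step 2: Sign of $\partial_{x_1}U$.} By definition of the Gaussian rearrangement, $g\geq 0$ is non-decreasing on $(\lambda,\infty)$ with $g(\lambda^{+})=f^{\circledast}(\gamma(\Omega))=0$, matching the Dirichlet datum. Assume first that $g$ is smooth and compactly supported in $(\lambda,\infty)$, set $V:=\partial_{x_1}U$, and differentiate the PDE in $x_1$; the commutator identity $[\partial_{x_1},\mathcal{L}]=\partial_{x_1}$ yields
\[
V_t+(\mathcal{L}+1)V=0\;\text{in}\;(\lambda,\infty)\times(0,\infty).
\]
Since $g\geq 0$, the parabolic maximum principle gives $U\geq 0$ on $(\lambda,\infty)\times(0,\infty)$; combined with $U(\lambda,t)=0$, Hopf's lemma yields $V(\lambda,t)\geq 0$. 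Testing the equation for $V$ against the negative part $V^{-}:=\max(-V,0)$ in $L^{2}((\lambda,\infty),\gamma)$ and integrating by parts (the boundary term vanishes because $V^{-}(\lambda,t)=0$) gives
\[
\tfrac{1}{2}\tfrac{d}{dt}\!\int_{\lambda}^{\infty}\!(V^{-})^{2}\,d\gamma+\int_{\lambda}^{\infty}\!|\partial_{x_1}V^{-}|^{2}\,d\gamma+\int_{\lambda}^{\infty}\!(V^{-})^{2}\,d\gamma=0.
\]
Since $V^{-}(\cdot,0)=(g')^{-}\equiv 0$, Gr\"onwall's inequality forces $V^{-}\equiv 0$; hence $V\geq 0$ and $U(\cdot,t)$ is non-decreasing in $x_1$.

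\textbf{Step 3: Approximation and conclusion.} For a general $f^{\displaystyle\star}\in L^{2}(\Omega^{\displaystyle\star},\gamma)$, approximate $g$ by smooth non-decreasing $g_k$ compactly supported in $(\lambda,\infty)$, apply Step~2 to the corresponding $\eta_k$, and pass to the limit using the $L^{2}$-contractivity \eqref{Stima Lp semigruppo}; monotonicity in $x_1$ is preserved in the $L^{2}$-limit. Since $\eta(\cdot,t)\geq 0$ is $x_1$-monotone and independent of $(x_2,\dots,x_n)$, every super-level set of $\eta(\cdot,t)$ is a half-space of the form $\{x_1>\mu\}$, forcing $\eta(\cdot,t)=\eta^{\displaystyle\star}(\cdot,t)$ almost everywhere in $\Omega^{\displaystyle\star}$ for every $t\geq 0$. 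The main obstacle is the rigorous justification of the sign $V(\lambda,t)\geq 0$ on the lateral boundary, which is the crucial input for the energy estimate; it requires $C^{1}$ regularity of $U$ up to $\{x_1=\lambda\}$ and a Hopf-type boundary point lemma adapted to the OU operator with its unbounded drift, both nontrivial on the unbounded parabolic cylinder, and motivates the smooth compactly supported approximation of Step~3 to transfer the qualitative conclusion back to the original $L^{2}$ datum.
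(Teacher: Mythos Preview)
The paper does not give its own proof of this proposition; it is quoted from \cite{chiacchio} without argument. Your strategy---reduce to a one-dimensional half-line problem, differentiate in $x_1$, and use a maximum-principle/energy argument to propagate monotonicity---is sound in spirit and different from the parabolic symmetrization machinery in \cite{chiacchio}. The reduction in Step~1 and the commutator computation $[\partial_{x_1},\mathcal{L}]=\partial_{x_1}$ leading to $V_t+(\mathcal{L}+1)V=0$ are correct, and the energy identity you derive for $V^-$ is right once the boundary term at $x_1=\lambda$ is controlled.

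There is, however, a genuine gap in your approximation scheme. In Step~2 you assume $g$ is smooth and compactly supported in $(\lambda,\infty)$, and you then use $(g')^-\equiv 0$, i.e.\ $g$ non-decreasing. But a non-decreasing function on $(\lambda,\infty)$ that vanishes for large $x_1$ is identically zero; the same objection applies verbatim to the approximants $g_k$ in Step~3. Thus the class of functions to which you actually apply Step~2 is $\{0\}$, and the passage from Step~2 to Step~3 is vacuous. You correctly flag the Hopf boundary-point issue as a subtlety, but this more elementary incompatibility between ``compactly supported'' and ``non-decreasing'' is the real obstruction in your write-up.

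The fix is to drop compact support and work instead with, say, smooth bounded non-decreasing $g_k$ with $g_k\equiv 0$ near $\lambda$ and $g_k\equiv\mathrm{const}$ for large $x_1$ (so $g_k\in H^1((\lambda,\infty),\gamma)$ and parabolic regularity gives $U\in C^1$ up to $x_1=\lambda$), or alternatively to argue directly for $t>0$ using parabolic smoothing of the original datum. Either route makes Steps~2--3 non-trivial; as written the argument is incomplete.
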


%%%%%%%%%%%%%%%%%%%%%%%%%%%%%%%%%%%%%%%%%%%%%%
\subsection{The second order derivation formula}
%%%%%%%%%%%%%%%%%%%%%%%%%%%%%%%%%%%%%%%%%%%%%%

It will be essential for us to be able to differentiate
with respect to the extra variable $y$ under the integral symbol in the expression
\[\int_{\{x:\,w(x,y)>w^{\circledast}(r,y)\}}\frac{\partial w}{\partial y}(x,y)\,d\gamma(x).\]
Equivalently, we need to derive the Gaussian version of the first and second order differentiation
formulas established for the Lebesgue measure in \cite{Al-Li-TRo, Band2, Ferone-Mercaldo, Mossino}. The first order differentiation formula can be stated
as follows:

\begin{proposition}[See \cite{chiacchio}, also \cite{simon}]
If $w\in H^{1}(0,T;L^{2}(\Omega,\gamma))$ is a nonnegative function,
for some $T>0$, then $w^{\circledast}\in H^{1}\big(0,T;L^{2}(0,\gamma(\Omega))\big)$.
In addition, if $\gamma(\{w(x,t)=w^{\circledast}(r,t)\})=0$ for a.e.
$(r,t)\in(0,\gamma(\Omega))\times(0,T)$, then the following derivation formula holds%
\begin{equation}\label{Rakotoson derivation formula}
\int_{\left\{x:\,w(x,y)>w^{\circledast}(r,y)\right\}}
\frac{\partial}{\partial y}w(x,y)\,d\gamma(x)=\int_{0}^{r}
w^{\circledast}(\sigma,y)\,d\sigma.
\end{equation}
\end{proposition}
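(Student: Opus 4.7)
The plan is to establish first the regularity claim $w^{\circledast} \in H^{1}(0,T;L^{2}(0,\gamma(\Omega)))$ and then the pointwise identity. For the regularity, I would invoke the nonexpansivity of one-dimensional decreasing Gaussian rearrangement: for any nonnegative measurable functions $u,v$ on $\Omega$,
\begin{equation*}
\|u^{\circledast}-v^{\circledast}\|_{L^{2}(0,\gamma(\Omega))}\leq\|u-v\|_{L^{2}(\Omega,\gamma)},
\end{equation*}
which is a standard Crowe--Zweibel--Rosenbloom type estimate, easily deduced from the equimeasurability and Hardy--Littlewood inequality \eqref{Hardy-Litt}. Applied to the difference quotients $h^{-1}(w(\cdot,y+h)-w(\cdot,y))$, and using that $\partial_{y}w$ exists as an element of $L^{2}(0,T;L^{2}(\Omega,\gamma))$, this shows that $y\mapsto w^{\circledast}(\cdot,y)$ is absolutely continuous into $L^{2}(0,\gamma(\Omega))$ with derivative controlled in norm by $\|\partial_{y}w(\cdot,y)\|_{L^{2}(\Omega,\gamma)}$, hence it belongs to $H^{1}$.

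For the differentiation identity, the key ingredient is the variational characterization
\begin{equation*}
\Psi(y):=\int_{0}^{r}w^{\circledast}(\sigma,y)\,d\sigma=\sup\Big\{\int_{E}w(x,y)\,d\gamma(x)\ :\ E\subset\Omega,\ \gamma(E)=r\Big\}.
\end{equation*}
Under the hypothesis $\gamma(\{w(\cdot,y)=w^{\circledast}(r,y)\})=0$, the super-level set $E_{r}(y):=\{x\in\Omega:w(x,y)>w^{\circledast}(r,y)\}$ has Gaussian measure exactly equal to $r$ and attains the supremum, so $\Psi(y)=\int_{E_{r}(y)}w(x,y)\,d\gamma(x)$. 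The target formula will then be read as the statement $\Psi'(y)=\int_{E_{r}(y)}\partial_{y}w(x,y)\,d\gamma(x)$, which is the natural Mossino--Rakotoson identity in the Gaussian setting.

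The main obstacle, which I expect to be the delicate point, is differentiating $\Psi(y)$ despite the $y$-dependence of $E_{r}(y)$. The plan is to exploit the envelope structure via a two-sided sandwich. For $h>0$, the maximality characterization applied at $y$ and at $y+h$ yields
\begin{equation*}
\int_{E_{r}(y)}\frac{w(\cdot,y+h)-w(\cdot,y)}{h}\,d\gamma\leq\frac{\Psi(y+h)-\Psi(y)}{h}\leq\int_{E_{r}(y+h)}\frac{w(\cdot,y+h)-w(\cdot,y)}{h}\,d\gamma,
\end{equation*}
and an analogous chain with inequalities reversed for $h<0$. The outer integrands converge strongly to $\partial_{y}w(\cdot,y)$ in $L^{2}(\Omega,\gamma)$ thanks to the $H^{1}$ hypothesis on $w$. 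To close the argument one needs the convergence $\mathbf{1}_{E_{r}(y+h)}\to\mathbf{1}_{E_{r}(y)}$ in $L^{2}(\Omega,\gamma)$; this is where the hypothesis $\gamma(\{w(\cdot,y)=w^{\circledast}(r,y)\})=0$ enters crucially, since it prevents the super-level sets from absorbing a nontrivial piece of the level set as $h\to 0$, while the $L^{2}$-continuity of $w^{\circledast}(r,\cdot)$ obtained in the first paragraph forces the threshold to move continuously. Combining these two ingredients identifies both one-sided limits with $\int_{E_{r}(y)}\partial_{y}w(x,y)\,d\gamma(x)$, establishing the derivation formula.
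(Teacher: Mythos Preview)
The paper does not prove this proposition; it is quoted from \cite{chiacchio} and \cite{simon} and stated without argument. So there is no ``paper's own proof'' to compare against. Your outline is essentially the classical Mossino--Rakotoson argument (nonexpansivity of rearrangement in $L^{2}$ for the regularity, and the bathtub/envelope sandwich for the derivation identity), which is precisely the approach in those references adapted to the Gaussian measure. In that sense your proposal is correct and aligned with the literature the paper cites.

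Two remarks. First, you have silently corrected a typo: as printed, the right-hand side of \eqref{Rakotoson derivation formula} reads $\int_{0}^{r}w^{\circledast}(\sigma,y)\,d\sigma$, but the intended identity (and the one actually used in the proofs of Corollary~\ref{Secondderivform} and Theorem~\ref{primoteoremadiconfronto}) is
\[
\int_{\{x:\,w(x,y)>w^{\circledast}(r,y)\}}\frac{\partial w}{\partial y}(x,y)\,d\gamma(x)=\frac{\partial}{\partial y}\int_{0}^{r}w^{\circledast}(\sigma,y)\,d\sigma,
\]
i.e.\ your $\Psi'(y)$. Second, the one step in your sketch that deserves a more careful write-up is the convergence $\mathbf{1}_{E_{r}(y+h)}\to\mathbf{1}_{E_{r}(y)}$ in $L^{2}(\Omega,\gamma)$: you need, for a.e.\ fixed $r$, that the threshold $w^{\circledast}(r,\cdot)$ is continuous in $y$ (not merely that $w^{\circledast}(\cdot,y)$ varies continuously in $L^{2}$), together with the a.e.\ continuity of $y\mapsto w(\cdot,y)$ in $L^{2}(\Omega,\gamma)$ and the no-plateau hypothesis. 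This is standard but is the place where the ``a.e.\ $(r,t)$'' qualification in the statement is doing work.
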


In order to prove our novel second order derivation formula,
we need the following version of the coarea formula (see \cite{Federer} and \cite[Theorem 11]{Haj}).

\begin{proposition}
\label{Prop coarea copy(1)}If $u\in W_{\mathrm{loc}}^{1,p}(\R^n)$, with $p>1$,
and $\psi:\R^n\to\R$ is a nonnegative measurable function, then there exists a representative of
$u$, denoted again by $u$, such that
\begin{equation}\label{coarea}%
\int_{\R^n}\psi(x)|\nabla_{x}u|\,dx=\int_{-\infty}^{\infty}\bigg(\int_{\left\{x:\,u(x)=\tau\right\}}\psi(x)\,d\mathcal{H}^{n-1}(x)\bigg)\,d\tau.
\end{equation}
\end{proposition}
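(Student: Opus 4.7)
The plan is to reduce the Sobolev statement to Federer's classical Lipschitz coarea formula by means of a Lusin-type truncation of $u$ through the maximal function. I would take for granted that, for any Lipschitz function $v:\R^n\to\R$ and any nonnegative Borel weight $\psi$,
$$\int_{\R^n}\psi(x)|\nabla v(x)|\,dx=\int_{-\infty}^\infty\int_{\{v=\tau\}}\psi(x)\,d\mathcal{H}^{n-1}(x)\,d\tau,$$
as this is the classical Federer coarea theorem, proved by combining Sard's theorem for smooth mollifications with Rademacher's theorem.

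The essential observation is a pointwise Lipschitz estimate off a small set. Let $u^{\ast}$ denote the precise (Lebesgue-point) representative of $u$, and let $M$ be the Hardy--Littlewood maximal operator. For each $\lambda>0$ set $A_\lambda:=\{x\in\R^n: M(|\nabla u|)(x)\leq\lambda\}$. A classical telescoping argument over dyadic balls, combined with the Poincar\'e inequality, yields a dimensional constant $C_n$ such that
$$|u^{\ast}(x)-u^{\ast}(y)|\leq C_n\lambda|x-y|,\quad\hbox{for all}~x,y\in A_\lambda.$$
By McShane extension, $u^{\ast}|_{A_\lambda}$ extends to a globally $C_n\lambda$-Lipschitz function $u_\lambda:\R^n\to\R$ with $\nabla u_\lambda=\nabla u$ almost everywhere on $A_\lambda$. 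Since $p>1$ implies $|\nabla u|\in L^{p}_{\mathrm{loc}}$, the maximal function $M(|\nabla u|)$ is finite almost everywhere, so the sets $A_{\lambda_k}$ exhaust $\R^n$ up to a null set for any sequence $\lambda_k\nearrow\infty$.

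With these ingredients, I would apply the Lipschitz coarea formula to each $u_{\lambda_k}$ against the weight $\psi\,\chi_{A_{\lambda_k}}$:
$$\int_{A_{\lambda_k}}\psi|\nabla u|\,dx=\int_{\R^n}\psi\,\chi_{A_{\lambda_k}}|\nabla u_{\lambda_k}|\,dx=\int_{-\infty}^\infty\int_{\{u_{\lambda_k}=\tau\}\cap A_{\lambda_k}}\psi\,d\mathcal{H}^{n-1}\,d\tau.$$
Because $u_{\lambda_k}\equiv u^{\ast}$ on $A_{\lambda_k}$, each inner slice lies in $\{u^{\ast}=\tau\}$, and the slices increase to fill out $\{u^{\ast}=\tau\}$ up to an $\mathcal{H}^{n-1}$-null set for almost every $\tau$ as $k\to\infty$. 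Monotone convergence on both sides then delivers \eqref{coarea} with the representative $u^{\ast}$.

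The main technical obstacle is this last passage: one must verify that for almost every $\tau$ the slices $\{u^{\ast}=\tau\}\cap A_{\lambda_k}$ exhaust $\{u^{\ast}=\tau\}$ in $\mathcal{H}^{n-1}$-measure, so that the right-hand side converges as $k\to\infty$. This is the only step where fixing a single, globally defined representative $u^{\ast}$ is crucial: it is what allows the Lipschitz truncations $u_{\lambda_k}$ to share level sets consistently and to glue together into slices of $u$ itself. Once this slicing issue is resolved through a standard Fubini argument showing that the exceptional sets for the different $\lambda_k$ aggregate into a $\tau$-null set, the remaining passages to the limit are routine.
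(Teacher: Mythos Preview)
The paper does not supply a proof of this proposition: it is stated as a known result, with references to Federer and to Haj{\l}asz \cite[Theorem~11]{Haj}. Your outline is precisely the strategy of the latter reference --- Lusin-type Lipschitz truncation on the sublevel sets $A_\lambda=\{M(|\nabla u|)\leq\lambda\}$, McShane extension, application of the classical Lipschitz coarea formula, and a monotone limit --- so in substance you have reproduced the cited argument rather than offered an alternative.

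One point deserves sharpening. You correctly isolate the delicate step: controlling the contribution of the bad set $N=\R^n\setminus\bigcup_k A_{\lambda_k}$ to the level-set slices. However, the resolution is not a ``standard Fubini argument''. The set $N$ has $\mathcal{L}^n$-measure zero, but a Lebesgue-null set can meet level sets of a fixed function in sets of positive $\mathcal{H}^{n-1}$-measure for uncountably many $\tau$, so Fubini for product Lebesgue measure says nothing here. The actual fix is built into the statement: the proposition promises only \emph{some} representative, not the precise one. Having established
\[
\int_{\R^n}\psi|\nabla u|\,dx=\int_{-\infty}^{\infty}\int_{\{u^{\ast}=\tau\}\cap(\R^n\setminus N)}\psi\,d\mathcal{H}^{n-1}\,d\tau,
\]
one simply redefines $u^{\ast}$ on $N$ to a single constant value; this alters at most one slice $\tau$, leaving the $d\tau$-integral unchanged, and yields a representative for which \eqref{coarea} holds as stated. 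With that correction your argument is complete.
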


Now we present our new Gaussian derivation formulas, which are a nonstandard adaptation of the
derivation formula exhibited in \cite{Ferone-Mercaldo}.

\begin{theorem}\label{Firstderivform}
Let $0<\varepsilon<T<\infty$. Consider a nonnegative function
$$w=w(x,y)\in H_{0,L}^{1}(\mathcal{C}_{\Omega},d\gamma(x)\otimes y^ady)\cap C^{1}(\Omega\times(\varepsilon,T)).$$
Suppose also that $w$ is $C^{1,\alpha}$ with respect to $y\in(\varepsilon,T)$,
for some $0<\alpha\leq1$, uniformly with respect to $x\in \Omega$.
Moreover, assume that $f(x,y)$ is a continuous function
on the cylinder $\mathcal{C}_{\Omega}$ such that $f\in H^{1}\normalcolor(\mathcal{C}_{\Omega},d\gamma(x)\otimes y^a \,dy)$
and the function $f(x,y)\varphi(x)$
is Lipschitz with respect to $y\in(  \varepsilon,T)$, uniformly with respect to $x\in$ $\Omega$.
Furthermore, suppose that
\begin{equation}
\gamma\Big(\big\{x\in\Omega:|\nabla_{x}w|=0,w(x,y)\in(0,\sup_{x}w(x,y))\big\}\Big)
=0,\quad\hbox{for all}~y\in(\varepsilon,T),
\label{cond aggiuntiva}%
\end{equation}
and set
\[
H(t,y):=\int_{\left\{x:\,w(x,y)>t\right\}}f(x,y)\,d\gamma(x),
\]
for $t\in[0,\infty)$ and $y\in(\varepsilon,T)$. The following statements hold true.
\begin{enumerate}[$(i)$]
\item For any fixed $y\in(\varepsilon,T)$, $H(t,y)$ is differentiable with respect to $t$ for a.e. $t\geq0$ and
\begin{equation}
\frac{\partial}{\partial t}H(t,y)=-\int_{\left\{x:\,w(x,y)=t\right\}}
\frac{f(x,y)}{|\nabla_{x}w|}\varphi(x)\,d\mathcal{H}^{n-1}(x).
\label{t derivate}%
\end{equation}
\item For any fixed $t\geq0$, $H(t,y)$ is differentiable with respect to $y$ and,
for a.e $y\in(\varepsilon,T)$,
\begin{equation}
\frac{\partial}{\partial y}H(t,y)=\displaystyle\int_{\left\{x:\,w(x,y)>t\right\}}
\frac{\partial}{\partial y}f(x,y)\,d\gamma(x)+
\displaystyle\int_{\left\{x:\,w(x,y)=t\right\}}\frac{\partial}{\partial y}w(x,y)\frac{f(x,y)}{|\nabla_{x}w|}\varphi(x)\,d\mathcal{H}^{n-1}(x).
\label{y derivate}%
\end{equation}
\end{enumerate}
\end{theorem}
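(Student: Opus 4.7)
Set $\tilde f(x,y):=f(x,y)\varphi(x)$, so that $H(t,y)=\int_{\{w(\cdot,y)>t\}}\tilde f(x,y)\,dx$. By hypothesis $\tilde f$ is continuous on $\mathcal{C}_\Omega$, Lipschitz in $y$ uniformly in $x$, and $w(\cdot,y)\in W^{1,p}_{\mathrm{loc}}$ for every $p>1$ and every $y\in(\varepsilon,T)$ thanks to the $C^1$ regularity of $w$; hence the coarea formula of Proposition \ref{Prop coarea copy(1)} applies to $w(\cdot,y)$. Note also that hypothesis \eqref{cond aggiuntiva}, combined with $d\gamma=\varphi\,dx$ and $\varphi>0$, forces $|\nabla_xw(\cdot,y)|>0$ a.e.\ (in Lebesgue measure) inside $\{0<w<\sup_x w\}$, so the division by $|\nabla_xw|$ is legitimate.

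For part $(i)$, I would fix $y\in(\varepsilon,T)$ and, for $0\le t_1<t_2$, write
$$H(t_1,y)-H(t_2,y)=\int_{\{t_1<w(\cdot,y)\le t_2\}}\tilde f(x,y)\,dx.$$
Multiplying and dividing the integrand by $|\nabla_xw|$ and applying \eqref{coarea} with $\psi=\tilde f\,\chi_{\{t_1<w\le t_2\}}/|\nabla_xw|$ gives
$$H(t_1,y)-H(t_2,y)=\int_{t_1}^{t_2}\bigg(\int_{\{w(\cdot,y)=\tau\}}\frac{\tilde f(x,y)}{|\nabla_xw|}\,d\mathcal{H}^{n-1}(x)\bigg)\,d\tau,$$
from which \eqref{t derivate} follows at once by the Lebesgue differentiation theorem.

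For part $(ii)$, fix $t\ge 0$ and split the difference quotient at $y\in(\varepsilon,T)$ as $h^{-1}(H(t,y+h)-H(t,y))=I_1(h)+I_2(h)$, where
$$I_1(h)=\int_\Omega\frac{\tilde f(x,y+h)-\tilde f(x,y)}{h}\,\chi_{\{w(\cdot,y)>t\}}\,dx,\qquad I_2(h)=\int_\Omega\tilde f(x,y+h)\,\frac{\chi_{\{w(\cdot,y+h)>t\}}-\chi_{\{w(\cdot,y)>t\}}}{h}\,dx.$$
The Lipschitz-in-$y$ bound on $\tilde f$ and dominated convergence give $I_1(h)\to\int_{\{w(\cdot,y)>t\}}(\partial_yf)\,d\gamma$. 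For $I_2$ I would regularize the Heaviside: choose a smooth nondecreasing $\Theta_\delta$ with $\Theta_\delta\equiv 0$ on $(-\infty,t]$ and $\Theta_\delta\equiv 1$ on $[t+\delta,\infty)$, define $H_\delta(y):=\int_\Omega \tilde f(x,y)\Theta_\delta(w(x,y))\,dx$, and differentiate under the integral sign (permissible thanks to the joint $C^1$ regularity of $w$) to obtain
$$H_\delta'(y)=\int_\Omega(\partial_y\tilde f)\,\Theta_\delta(w)\,dx+\int_t^{t+\delta}\Theta_\delta'(\tau)\,G(\tau,y)\,d\tau,\qquad G(\tau,y):=\int_{\{w(\cdot,y)=\tau\}}\frac{\tilde f\,\partial_yw}{|\nabla_xw|}\,d\mathcal{H}^{n-1},$$
where the coarea reduction of part $(i)$ has been applied to the boundary term. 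As $\delta\to 0$, the first summand tends to $\int_{\{w(\cdot,y)>t\}}(\partial_y\tilde f)\,dx$ and the second to $G(t,y)$ at every Lebesgue point of $G(\cdot,y)$.

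To finish, I would integrate $H_\delta'$ over $[y_0,y_1]\subset(\varepsilon,T)$, pass to the limit $\delta\to 0$ by dominated convergence (using the uniform Lipschitz bound on $\tilde f$ and the uniform $C^1$ bounds on $w$), and differentiate the resulting integral identity for $H(t,\cdot)$ to read off \eqref{y derivate} for a.e.\ $y$. The main technical obstacle lies in the treatment of $I_2$: one must bound the symmetric difference $\{w(\cdot,y+h)>t\}\bigtriangleup\{w(\cdot,y)>t\}$ by an $O(h)$ tubular neighbourhood of the level set $\{w(\cdot,y)=t\}$ and verify continuity of $\tau\mapsto G(\tau,y)$ at $\tau=t$. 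Both ingredients hinge on the uniform $C^{1,\alpha}$-in-$y$ regularity of $w$, which supplies the Taylor expansion $w(x,y+h)=w(x,y)+h\,\partial_y w(x,y)+O(h^{1+\alpha})$ with remainder uniform in $x$, together with hypothesis \eqref{cond aggiuntiva}, which prevents $|\nabla_xw|$ from degenerating on the relevant level sets.
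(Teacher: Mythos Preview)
Your treatment of part $(i)$ coincides with the paper's. For part $(ii)$ the paper takes a more direct route: it writes $H(t,y)-H(t,\bar y)=\Delta_1+\Delta_2+\Delta_3$, where $\Delta_1$ is your $I_1$ while $\Delta_2$, $\Delta_3$ are the integrals of $f\,d\gamma$ over the two pieces of the symmetric difference $\{w(\cdot,y)>t\}\bigtriangleup\{w(\cdot,\bar y)>t\}$. The crux is a further split of $\Delta_2$ according to whether $\partial_yw(x,\bar y)$ vanishes. On the set $D_1$ where $\partial_yw(\cdot,\bar y)=0$, the $C^{1,\alpha}$ hypothesis yields $|w(x,y)-w(x,\bar y)|\le c|y-\bar y|^{1+\alpha}$, which confines $D_1$ to a $t$-shell of width $c|y-\bar y|^{1+\alpha}$; the contribution is then bounded by $|y-\bar y|^\alpha$ times a difference quotient of the monotone function $\Psi(t)=\int_{\{w(\cdot,\bar y)>t\}}\max_y|f|\,d\gamma$ and hence vanishes for a.e.\ $t$. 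On the complementary set $D_2$ where $\partial_yw(\cdot,\bar y)\neq 0$, the implicit function theorem writes the crossing locally as a graph $y=v(x,t)$, and the coarea formula applied to $v$ delivers the surface integral in \eqref{y derivate}.

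Your mollification strategy is a legitimate alternative that sidesteps the $D_1/D_2$ split, but the gap you yourself flag is real. The passage $\int_t^{t+\delta}\Theta_\delta'(\tau)G(\tau,y)\,d\tau\to G(t,y)$ is justified only at Lebesgue points of $G(\cdot,y)$, which gives the conclusion for a.e.\ $t$ with $y$ fixed rather than for fixed $t$ and a.e.\ $y$; upgrading this requires continuity of $\tau\mapsto G(\tau,y)$, and establishing that continuity amounts to the same implicit-function description of the level sets that the paper uses for $D_2$, so the two arguments ultimately converge on the same geometric core. Note also that in your scheme the $C^{1,\alpha}$ hypothesis plays no essential role---your Taylor expansion invokes it, but nothing in the mollification actually consumes the extra H\"older exponent---whereas in the paper it is precisely what kills the tangential-crossing contribution from $D_1$. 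That discrepancy is a warning sign: the hard work the paper does on $D_1$ has not disappeared in your approach; it is hidden inside the unproved regularity of $G$ and the tubular-neighbourhood bound you postpone to the final paragraph.
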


\begin{proof}
Let us first prove $(i)$. By the extension theorem (see for instance \cite{FEOPOST})
we can extend $w(\cdot,y)$ as a function in $H^1(\R^n)$, for a.e. $y>0$.
Condition \eqref{cond aggiuntiva} allows us to choose
$\psi(x)=\frac{f(x,y)}{|\nabla_xw|}\varphi(x)\chi_{\{w(x,y)>t\}}(x)$ and $u(x)=w(x,y)$
in the coarea formula \eqref{coarea} to get
$$\int_{\left\{x:\,w(x,y)>t\right\}}f(x,y)\,d\gamma(x)=\int_{t}^{\infty}\bigg(\int_{\left\{x:\,w(x,y)=\tau\right\}}
\frac{f(x,y)}{|\nabla_{x}w|}\varphi(x)\,d\mathcal{H}^{n-1}(x)\bigg)\,d\tau,$$
for a.e. $t\geq0$. Thus \eqref{t derivate} follows.

Next we prove $(ii)$.
We observe that
\[
H(t,y)-H(t,\overline{y})=\triangle_{1}+\triangle_{2}+\triangle_{3},
\]
where
$$\triangle_{1}=\int_{\left\{x:\,w(x,\overline{y})>t\right\}}
[f(x,y)-f(x,\overline{y})]\,d\gamma(x),\quad
\triangle_{2}=\int_{\left\{x:\,w(x,y)>t\geq w(x,\overline{y})\right\}}f(x,y)\,d\gamma(x),$$
and
$$\triangle_{3}=-\int_{\left\{x:\,w(x,\overline{y})>t\geq w(x,y)\right\}}f(x,y)\,d\gamma(x).$$
Since $f(x,y)\varphi(x)$ is Lipschitz with respect
to $y$, uniformly in $x$, by Lebesgue's dominated convergence theorem we easily infer that
\begin{equation}
\underset{y\rightarrow\overline{y}}{\lim}\frac{\triangle_1}{y-\overline{y}}=\int_{\left\{x:\,w(x,\overline{y})>t\right\}}
\frac{\partial f}{\partial y}(x,\overline{y})\,d\gamma(x),
\label{DELTA 1}%
\end{equation}
for a.e. $t$ and a.e. $\overline{y}\in(\varepsilon,T)$.
Let us next consider $\frac{\triangle_{2}}{y-\overline{y}}.$ We have%
\begin{equation}
\frac{\triangle_{2}}{y-\overline{y}}=\frac{1}{y-\overline{y}}\displaystyle\int_{D_{1}}f(x,y)\,d\gamma(x)
+\frac{1}{y-\overline{y}}\int_{D_{2}}f(x,y)\,d\gamma(x),
\label{divisa}%
\end{equation}
where
\[
D_{1}=\left\{x\in\Omega:w(x,y)>t\geq w(x,\overline{y}),\frac{\partial w
}{\partial y}(x,\overline{y})=0\right\},
\]
and
\[
D_{2}=\left\{x\in\Omega:w(x,y)>t\geq w(x,\overline{y}),\frac{\partial w
}{\partial y}(x,\overline{y})\neq0\right\}.
\]
We claim that
\begin{equation}
\lim_{y\rightarrow\overline{y}}\frac{1}{y-\overline{y}}\int_{D_{1}}f(x,y)\,d\gamma(x)=0,\quad
\hbox{for a.e.}~t\geq0.
\label{zero}%
\end{equation}
Since $w(x,y)\in C^{1,\alpha}$ with respect to $y\in(\varepsilon,T)$, uniformly in $x\in\Omega$, we have
\begin{equation}
\left\vert \frac{\partial w}{\partial y}(x,y)-\frac{\partial w}{\partial
y}(x,\overline{y})\right\vert \leq c|y-\overline{y}|^{\alpha},\quad\hbox{for every}~x\in\Omega,
\label{holder continuity}%
\end{equation}
for a constant $c>0$ independent on $x$, $y$ and $\overline{y}$.
Since for any
$x\in D_{1}$ we have $\frac{\partial}{\partial y}w(x,\overline{y})=0$, by \eqref{holder continuity} we easily find the uniform estimate
\[
\left\vert w(x,y)-w(x,\overline{y})\right\vert\leq
\int_{\overline{y}}^{y}\left|\frac{\partial}{\partial
z}w(x,z)\right|dz \leq c|y-\overline{y}|^{\alpha+1},\quad\hbox{for all}~x\in D_{1},
\]
which yields
\begin{equation}\label{1quotratio}
\left\vert \frac{1}{y-\overline{y}}\int_{D_{1}}f(x,y)\,d\gamma(x)\right\vert
\leq\frac{1}{\left\vert
y-\overline{y}\right\vert }\int_{\{x:\,t-c|y-\overline{y}|^{\alpha+1}<w(x,\overline{y})\leq t\}}
|f(x,y)|\,d\gamma(x).
\end{equation}
Let us set
\[
\Psi(t):=\int_{\left\{x:\,w(x,\overline{y})>t\right\}}\max_{y\in[\varepsilon,T]}|f(x,y)|\,d\gamma(x).
\]
Since $f\in L^{2}(\mathcal{C}_{\Omega},d\gamma(x)\otimes y^ady)$
and $f$ is continuous, by Fubini's theorem we have that
$$\int_{\Omega}|f(x,y)|\,d\gamma(x)<\infty,$$
for a.e. $y>0$, and $\Psi(t)<\infty$, for all $t\geq0$. Then \eqref{1quotratio} implies
\[
\left\vert \frac{1}{y-\overline{y}}\int_{D_{1}}f(x,y)\,d\gamma(x)\right\vert
\leq c\left\vert y-\overline{y}\right\vert ^{\alpha}\frac{\Psi\big(t-c\left\vert y-\overline
{y}\right\vert ^{\alpha+1}\big)-\Psi(t)}{c\left\vert y-\overline{y}\right\vert ^{\alpha+1}}.
\]
Since the function $\Psi$ it monotone, it is also differentiable almost everywhere and then \eqref{zero} holds.
Now let us evaluate the second term in \eqref{divisa}.
First we consider the case $y>\overline{y}.$
For $y$ sufficiently close to $\overline{y},$ we have
$$\frac{1}{y-\overline{y}}\int_{D_{2}}f(x,y)\,d\gamma(x)=\frac{1}{y-\overline{y}}\int_{D_{3}}f(x,y)\,d\gamma(x),$$
where $$D_{3}=\bigg\{x\in\Omega:w(x,y)>t\geq w(x,\overline{y}),\frac
{\partial w}{\partial y}(x,\overline{y})>0\bigg\}.$$
Let us set
\[
\Gamma_{t}=\left\{x\in\Omega:w(x,\overline{y})=t\right\}  \cap\left\{
x\in\Omega:\frac{\partial w}{\partial y}(x,\overline{y})>0\right\}  .
\]
In a neighborhood $B_{r}(\overline{x},\overline{y},t)$ of a point
$(\overline{x},\overline{y},t)\in\R^{n+2}$ with $\overline{x}\in\Gamma_{t}$, the equality $w(x,y)=t$
implicitly defines a function $y=v(x,t)$ such that $\overline{y}%
=v(\overline{x},t)$ and $w(x,v(x,t))=t.$ Moreover for $y$ sufficiently close
to $\overline{y}$ we have
\[
D_{3}\cap B_{r}(\overline{x},\overline{y},t)=\left\{  x\in B_{r}(\overline
{x},\overline{y},t):\overline{y}<v(x,t)<y\right\}  .
\]
Observe that the implicit function theorem gives $|\nabla_{x}v(x,t)|=|\nabla_{x}w(x,\overline{y})|/\frac{\partial w}{\partial y}
(x,\overline{y})$. Then using the coarea formula (\ref{coarea})
we have
\begin{equation}\label{delta 2}
\begin{aligned}
\underset{y\rightarrow\overline{y}^{+}}{\lim}\frac{1}{y-\overline{y}}\int_{D_{3}\cap B_{r}(\overline{x},\overline{y},t)}
&f(x,y)\,d\gamma(x)
=\underset{y\rightarrow\overline{y}^{+}}{\lim}\frac{1}{y-\overline{y}}\int_{\overline{y}}^{y}\int_{\left\{x:\,v(x,t)=s\right\}}
\frac{f(x,y)\,\varphi(x)}{|\nabla_{x}v|}\,d\mathcal{H}^{n-1}(x)\,ds\\
&=\int_{\{x\in B_{r}(\overline{x},\overline{y},t):v(x,t)=\overline{y}\}}\frac{f(x,\overline{y})}{\left\vert \nabla
_{x}v\right\vert }\varphi(x)\,d\mathcal{H}^{n-1}(x)\\
&=\int_{\{x\in B_{r}(\overline{x},\overline{y},t):w(x,\overline{y})=t\}}\frac{\partial w}{\partial y}
(x,\overline{y})\frac{f(x,\overline{y})}{|\nabla_{x}w|}\varphi(x)\,d\mathcal{H}^{n-1}(x).
\end{aligned}
\end{equation}
By \eqref{zero} and \eqref{delta 2} it follows that
\begin{equation}
\underset{y\rightarrow\overline{y}^{+}}{\lim}\frac{\triangle_{2}}
{y-\overline{y}}=\int_{\{x:\,w(x,\overline{y})=t,\frac{\partial w}{\partial
y}(x,\overline{y})>0\}}\frac{\partial}{\partial y}w(x,\overline
{y})\frac{f(x,\overline{y})}{|\nabla_{x}w|}\varphi(x)\,d\mathcal{H}^{n-1}(x).
\label{i}%
\end{equation}
By analogous arguments we obtain
\begin{equation}
\underset{y\rightarrow\overline{y}^{-}}{\lim}\frac{\triangle_{2}}%
{y-\overline{y}}=\int_{\{x:\,w(x,\overline{y})=t,\frac{\partial w}{\partial
y}(x,\overline{y})<0\}}\frac{\partial w}{\partial y}(x,\overline
{y})\,\frac{f(x,\overline{y})}{\left\vert \nabla_{x}w\right\vert }\varphi(x)\,d\mathcal{H}^{n-1}(x).
\label{ii}%
\end{equation}
In the same way we can prove the analogue of (\ref{i})\ and (\ref{ii}) with
$\triangle_{2}\ $replaced by $\triangle_{3}.$ Then%
\begin{equation}
\underset{y\rightarrow\overline{y}}{\lim}\frac{\triangle_{2}+\triangle_{3}%
}{y-\overline{y}}=\int_{\left\{  x:\,w(x,\overline{y})=t\right\}  }%
\frac{\partial w}{\partial y}(x,\overline{y})\,\frac{f(x,\overline{y}%
)}{\left\vert \nabla_{x}w\right\vert }\varphi(x)\,d\mathcal{H}^{n-1}(x).
\label{DELTA 2 e3}%
\end{equation}
Putting together \eqref{DELTA 1} and \eqref{DELTA 2 e3} we obtain assertion $(ii)$.
\end{proof}
By recalling that the rearrangement $w^{\circledast}$ of a function $w$ is the
generalized inverse function of the distribution function $\gamma_{w}$, and
applying the chain rule formula, we can prove our novel derivation formula.

\begin{corollary}[Gaussian second order derivation formula]\label{Secondderivform}
Under the assumptions of Theorem \ref{Firstderivform}, for a.e. $y\in(\varepsilon,T)$ the following
derivation formula holds:
\begin{multline}
\frac{\partial}{\partial y}\int_{\left\{x:\,w(x,y)>w^{\circledast}(r,y)\right\}}f(x,y)\,d\gamma(x)=
\int_{\left\{x:\,w(x,y)>w^{\circledast}(r,y)\right\}}\frac{\partial}{\partial y}f(x,y)\,d\gamma(x)\\
-\int_{\left\{x:\,w(x,y)=w^{\circledast}(r,y)\right\}}\frac{f(x,y)}{|\nabla_{x}w|}
\left[\frac{{\displaystyle\int_{\left\{x:\,w(x,y)=w^{\circledast}(r,y)\right\}}}
\frac{\frac{\partial}{\partial y}w(x,y)}{|\nabla_{x}w|}\varphi(x)\,d\mathcal{H}^{n-1}(x)}
{{\displaystyle\int_{\left\{x:\,w(x,y)=w^{\circledast}(r,y)\right\}}}\frac{1}{|\nabla_{x}w|}\varphi(x)\,d\mathcal{H}^{n-1}(x)}
-\frac{\partial}{\partial y}w(x,y)\right]\varphi(x)d\mathcal{H}^{n-1}(x).\label{Der1}
\end{multline}
In particular, if $w(x,y)$ is $C^{1}$ and the functions $w(x,y)\varphi(x)$,
$\frac{\partial}{\partial y}w(x,y)\varphi(x)$ are Lipschitz in $y\in(\varepsilon,T)$,
uniformly with respect to $x\in\Omega$, we have
\begin{equation}
\begin{aligned}%
&\int_{\left\{x:\,w(x,y)>w^{\circledast}(r,y)\right\}}\frac{\partial^{2}}{\partial y^{2}}w(x,y)\,d\gamma(x) \\
&=\frac{\partial^{2}}{\partial y^{2}}
\displaystyle\int_{0}^{r}
w^{\circledast}(\sigma,y)\,d\sigma-\int_{\left\{x:\,w(x,y)=w^{\circledast}(r,y)\right\}}
\frac{\big(\frac{\partial}{\partial y}w(x,y)\big)^{2}}{|\nabla_{x}w|}\varphi(x)\,d\mathcal{H}^{n-1}(x)\\
&\quad+\Bigg(\int_{\left\{x:\,w(x,y)=w^{\circledast}(r,y)\right\}}
\frac{\frac{\partial}{\partial y}w(x,y)}{|\nabla_{x}w|}\varphi(x)d\mathcal{H}^{n-1}(x)\Bigg)^{2}
\Bigg(\int_{\left\{x:\,w(x,y)=w^{\circledast}(r,y)\right\}}
\frac{\varphi(x)}{|\nabla_{x}w|}d\mathcal{H}^{n-1}(x)\Bigg)^{-1}.\label{Der2}
\end{aligned}
\end{equation}
\end{corollary}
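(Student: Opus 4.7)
The plan is to derive (\ref{Der1}) by combining the chain rule with both parts of Theorem~\ref{Firstderivform}, and then to obtain (\ref{Der2}) by bootstrapping (\ref{Der1}) against the first-order derivation formula (\ref{Rakotoson derivation formula}).

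For (\ref{Der1}), I set $G(r,y):=H(w^{\circledast}(r,y),y)$, which is precisely the left-hand side of (\ref{Der1}). Since $w^{\circledast}(\cdot,y)$ is the generalized inverse of the distribution function $t\mapsto \gamma_w(t,y)$, I differentiate the identity $\gamma_w(w^{\circledast}(r,y),y)=r$ implicitly in $y$. The derivatives of $\gamma_w$ in $t$ and in $y$ are supplied by Theorem~\ref{Firstderivform} applied to the constant datum $f\equiv 1$. Assumption (\ref{cond aggiuntiva}) ensures that the level-set quantity $\int_{\{w=w^{\circledast}\}}\varphi/|\nabla_x w|\,d\mathcal{H}^{n-1}$ is strictly positive and finite, so the implicit function theorem gives
$$\frac{\partial w^{\circledast}}{\partial y}(r,y)=\frac{\displaystyle\int_{\{w(x,y)=w^{\circledast}(r,y)\}}\frac{\partial_y w(x,y)}{|\nabla_x w|}\varphi(x)\,d\mathcal{H}^{n-1}(x)}{\displaystyle\int_{\{w(x,y)=w^{\circledast}(r,y)\}}\frac{\varphi(x)}{|\nabla_x w|}\,d\mathcal{H}^{n-1}(x)}.$$
The chain rule
$\partial_y G=(\partial_t H)\,\partial_y w^{\circledast}+\partial_y H$,
together with the expressions for $\partial_t H$ and $\partial_y H$ from parts (i) and (ii) of Theorem~\ref{Firstderivform}, produces (\ref{Der1}) after collecting the two boundary contributions on $\{w=w^{\circledast}\}$.

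For (\ref{Der2}), the first-order derivation formula (\ref{Rakotoson derivation formula}) (in its standard form, with $\partial/\partial y$ acting on the right-hand integral) reads
$$\frac{\partial}{\partial y}\int_0^r w^{\circledast}(\sigma,y)\,d\sigma=\int_{\{x:\,w(x,y)>w^{\circledast}(r,y)\}}\frac{\partial w}{\partial y}(x,y)\,d\gamma(x).$$
Under the stronger $C^1$ and Lipschitz hypotheses in the second part of the corollary, the choice $f:=\partial_y w$ satisfies the hypotheses of Theorem~\ref{Firstderivform}, so I may apply (\ref{Der1}) to this $f$. The interior integral on the right-hand side of (\ref{Der1}) becomes $\int_{\{w>w^{\circledast}\}}\partial_y^2 w\,d\gamma$, while the boundary bracket in (\ref{Der1}), once $\partial_y w^{\circledast}$ is replaced by the explicit quotient derived above, splits into the quadratic boundary term $-\int_{\{w=w^{\circledast}\}}(\partial_y w)^2\varphi/|\nabla_x w|\,d\mathcal{H}^{n-1}$ plus the squared-average correction displayed on the right-hand side of (\ref{Der2}). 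Rearranging yields (\ref{Der2}).

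The only genuine obstacle is justifying the chain-rule step for $G(r,y)=H(w^{\circledast}(r,y),y)$: one must verify that $y\mapsto w^{\circledast}(r,y)$ is absolutely continuous with the indicated derivative for a.e.\ $r$, and that the non-degeneracy condition (\ref{cond aggiuntiva}) holds at the value $t=w^{\circledast}(r,y)$ for a.e.\ $y$, so that $\partial_t\gamma_w$ is nonzero there and the implicit differentiation is licit. Both facts follow from (\ref{cond aggiuntiva}) combined with the assumed $C^{1,\alpha}$ regularity of $w$ in $y$, which prevents level sets of $w(\cdot,y)$ from carrying positive Gaussian measure. Once the chain rule is licensed, the remainder of the argument is a straightforward bookkeeping exercise involving the two differentiation formulas of Theorem~\ref{Firstderivform}.
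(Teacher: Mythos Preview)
Your proposal is correct and follows the paper's approach: the chain rule on $G(r,y)=H(w^{\circledast}(r,y),y)$ together with parts (i)--(ii) of Theorem~\ref{Firstderivform} for (\ref{Der1}), and then the choice $f=\partial_y w$ combined with the first-order formula (\ref{Rakotoson derivation formula}) for (\ref{Der2}).

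The only variation lies in how you compute $\partial_y w^{\circledast}$. You implicitly differentiate the identity $\gamma_w(w^{\circledast}(r,y),y)=r$, invoking Theorem~\ref{Firstderivform} with $f\equiv 1$ to obtain $\partial_t\gamma_w$ and $\partial_y\gamma_w$ (note that $f\equiv 1$ technically fails the $H^1$ hypothesis of the theorem, though the proof there clearly still goes through for this choice). The paper instead writes $\partial_y w^{\circledast}=\partial_r\partial_y\int_0^r w^{\circledast}\,d\sigma$, applies (\ref{Rakotoson derivation formula}) to the inner $y$-derivative, and then uses the coarea formula together with the identity
\[
-\frac{\partial w^{\circledast}}{\partial r}=\Bigg(\int_{\{w=w^{\circledast}(r,y)\}}\frac{\varphi}{|\nabla_x w|}\,d\mathcal{H}^{n-1}\Bigg)^{-1}.
\]
Both routes produce the same quotient expression for $\partial_y w^{\circledast}$, and the remainder of the argument is identical.
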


\begin{proof}
In order to prove \eqref{Der1} we need to evaluate the $y$-derivative of
$H(t,y)$ when $t=w^{\circledast}(r,y).$ By a rearrangement property (see for example \cite{Bennett}) we have
\begin{equation}
\int_{\left\{x:\,w(x,y)>w^{\circledast}(r,y)\right\}}w(x,y)\,d\gamma(x)=\int_{0}^{s}w^{\circledast}(\sigma,y)\,d\sigma.\label{ww}%
\end{equation}
Observe that by applying \eqref{coarea} it is not difficult to prove that
\begin{equation}
-\frac{\partial w^{\circledast}}{\partial r}=\Bigg(\int_{\left\{x:\,w(x,y)=w^{\circledast}(r,y)\right\}}\frac{\varphi(x)}{|\nabla_{x}w|}\,d\mathcal{H}^{n-1}(x)\Bigg)^{-1}\label{derivatrearrang}.
\end{equation}
Now using \eqref{coarea}, \eqref{derivatrearrang}, \eqref{Rakotoson derivation formula} and the chain rule,
\begin{align}
\frac{\partial}{\partial y}w^{\circledast}(r,y)&=\frac{\partial}{\partial y}\left(\frac{\partial}{\partial
r}\int_{0}^{r}w^{\circledast}(\tau,y)d\tau\right)\nonumber\\
&=\frac{\partial}{\partial r}\left(\frac{\partial}{\partial y}\int_{0}^{r}w^{\circledast}(\tau,y)d\tau\right)
=\frac{\partial}{\partial r}\int_{\left\{x:\,w(x,y)>w^{\circledast}(r,y)\right\}}\frac{\partial w}{\partial y}d\gamma(x)\label{tt}\\
&=\frac{\partial}{\partial r}\int_{w^{\circledast}(r,y)}^{\infty}d\tau\int_{\left\{x:\,w(x,y)=\tau\right\}}\frac{\frac{\partial }{\partial
y}w(x,y)}{|\nabla_{x}w|}\varphi(x)\,d\mathcal{H}^{n-1}(x)\nonumber\\
&=-\frac{\partial w^{\circledast}}{\partial r}\,\int_{\left\{x:\,w(x,y)=w^{\circledast}(r,y)\right\}}\frac{\frac{\partial }{\partial
y}w(x,y)}{|\nabla_{x}w|}\varphi(x)\,d\mathcal{H}^{n-1}(x)\nonumber\\
&=\frac{\displaystyle\int_{\left\{x:\,w(x,y)=w^{\circledast}(r,y)\right\}}\frac{\frac{\partial}{\partial
y}w(x,y)}{\left\vert \nabla_{x}w\right\vert }\varphi(x)\,d
\mathcal{H}^{n-1}(x)}{\displaystyle\int_{\left\{x:\,w(x,y)=w^{\circledast}(r,y)\right\}}\frac{1}{\left\vert
\nabla_{x}w\right\vert }\varphi(x)\,d\mathcal{H}^{n-1}(x)\nonumber%
}.
\end{align}
By (\ref{tt}), (\ref{t derivate}) and (\ref{y derivate}) we obtain%
\begin{align}
&\frac{\partial}{\partial y}H(w^{\circledast}(r,y),y)   =\left.
\frac{\partial}{\partial t}H(t,y)\right\vert _{t=w^{\circledast}(r,y)}%
\frac{\partial}{\partial y}w^{\circledast}(r,y)+H_{y}(w^{\circledast}(r,y),y)\label{111}\\
& =-\int_{\left\{x:\,w(x,y)=w^{\circledast}(r,y)\right\}}\frac{f(x,y)}{\left\vert \nabla
_{x}w\right\vert }\varphi(x)\,d\mathcal{H}^{n-1}(x)\times\nonumber
\frac{\displaystyle\int_{\left\{x:\,w(x,y)=w^{\circledast}(r,y)\right\}}\frac{\frac{\partial}{\partial
y}w(x,y)}{\left\vert \nabla_{x}w\right\vert }\varphi(x)\,d\mathcal{H}^{n-1}(x)}{\displaystyle\int_{\left\{x:\,w(x,y)=w^{\circledast}(r,y)\right\}}\frac{1}{\left\vert
\nabla_{x}w\right\vert }\varphi(x)\,d\mathcal{H}^{n-1}%
(x)}\nonumber\\
&\quad+ \int_{\left\{x:\,w(x,y)>w^{\circledast}(r,y)\right\}}\frac{\partial}{\partial y}f(x,y)\,d\gamma
(x)+\int_{\left\{x:\,w(x,y)=w^{\circledast}(r,y)\right\}}\frac{\partial}{\partial y}%
w(x,y)\frac{f(x,y)}{\left\vert \nabla_{x}w\right\vert }\varphi(x)\,d\mathcal{H}^{n-1}(x),\nonumber
\end{align}
which is \eqref{Der1}.
Now we are in position to prove \eqref{Der2}. Indeed,
by applying \eqref{111} with $f(x,y)=w_{y}(x,y)$ and \eqref{Rakotoson derivation formula},
we finally get
\begin{align*}
\frac{\partial^{2}}{\partial y^{2}}\int_{0}^{r}w^{\circledast}(\sigma,y)\,d\sigma
 &=\frac{\partial}{\partial y}\int_{\left\{x:\,w(x,y)>w^{\circledast}(r,y)\right\}}\frac{\partial}{\partial y}w(x,y)\,d\gamma(x) \\
& =\int_{\left\{x:\,w(x,y)>w^{\circledast}(r,y)\right\}}\frac{\partial^{2}}{\partial y^{2}}w(x,y)\,d\gamma(x) \\
&\quad+\int_{\left\{x:\,w(x,y)=w^{\circledast}(r,y)\right\}}\frac{\big(\frac{\partial}{\partial
y}w(x,y)\big)^{2}}{|\nabla_{x}w|}\varphi(x)\,d\mathcal{H}^{n-1}(x)\\
&\quad-\frac{
 \bigg(\displaystyle\int_{\left\{x:\,w(x,y)=w^{\circledast}(r,y)\right\}}\frac{\frac{\partial}{\partial
y}w(x,y)}{\|\nabla_{x}w|}\varphi(x)\,d\mathcal{H}^{n-1}(x)\bigg)^{2}
}
{
 \bigg(\displaystyle\int_{\left\{x:\,w(x,y)=w^{\circledast}(s,y)\right\}}\frac{1}{\left\vert \nabla
_{x}w\right\vert }\varphi(x)\,d\mathcal{H}^{n-1}(x)\bigg)
}.
\end{align*}
\end{proof}

\begin{remark}
The sum of the last two terms to the right-hand side of \eqref{Der2} is nonpositive, see \cite[Remark 2.8]{AlDiaz}.
\end{remark}

The following Lemma shows that we can actually apply the second order derivation formula
\eqref{Der2} to the solution $w$ to the extension problem \eqref{Problema estensione}, namely,
when $w=\mathcal{P}^s_yu$ is the extension of the solution
$u\in \mathcal{H}^s(\Omega,\gamma)$ to the linear problem \eqref{Problema}.

\begin{lemma}\label{lemma regolarita}
If $f\in L^{2}(\Omega,\gamma)$ then the second order derivation formula \eqref{Der2} can be applied to the solution $w$ to
problem \eqref{Problema estensione}.
\end{lemma}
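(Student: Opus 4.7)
The plan is to verify, one by one, the hypotheses of Theorem~\ref{Firstderivform} (and hence of Corollary~\ref{Secondderivform}) for the extension $w$ provided by Theorem~\ref{existweaksollin}, taking as the auxiliary function in Theorem~\ref{Firstderivform} the derivative $\partial_y w$ itself, as is required in \eqref{Der2}. The global energy bound $w\in H^{1}_{0,L}(\mathcal{C}_\Omega,d\gamma(x)\otimes y^{a}dy)$ is granted by Theorem~\ref{extensth}, so only interior smoothness on $\Omega\times(\varepsilon,T)$, the uniform Lipschitz/H\"older control in $y$, and the non-degeneracy condition \eqref{cond aggiuntiva} remain to be checked.

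My main tool will be the explicit semigroup representation
$$
w(x,y)=\frac{1}{\Gamma(s)}\int_{0}^{\infty}e^{-y^{2}/(4t)}\,e^{-t\mathcal{L}_\Omega}f(x)\,\frac{dt}{t^{1-s}}
$$
from \eqref{eq:semigroup formula f}, together with the eigenfunction expansion $e^{-t\mathcal{L}_\Omega}f=\sum_{k}e^{-\lambda_{k}t}\langle f,\psi_{k}\rangle_{L^{2}(\Omega,\gamma)}\psi_{k}$. The factor $e^{-y^{2}/(4t)}$ and its $y$-derivatives introduce inverse polynomial weights in $t$ near $t=0$, while the spectral decay $e^{-\lambda_{1}t}$ controls the tail; for $y\in[\varepsilon,T]$ this produces an integrable dominant for any finite number of $y$-derivatives, so that differentiation under the integral sign yields that $w$, $\partial_y w$, $\partial_y^{2}w$ are continuous on $\Omega\times(\varepsilon,T)$ and that both $w\varphi$ and $(\partial_y w)\varphi$ are Lipschitz (in fact $C^{\infty}$) in $y$ uniformly with respect to $x\in\Omega$. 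That $\partial_y w$ lies in $H^{1}(\mathcal{C}_\Omega,d\gamma(x)\otimes y^{a}dy)$ locally in $y\in(\varepsilon,T)$ follows either by the same termwise bound applied to the $L^{2}$-eigenfunction series or from interior energy estimates for the degenerate elliptic extension equation.

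For the crucial non-degeneracy condition \eqref{cond aggiuntiva}, I would exploit real analyticity in $x$. Each Dirichlet eigenfunction $\psi_{k}$ is real analytic in $\Omega$ since it solves the elliptic equation $-\operatorname{div}(\varphi\nabla\psi_{k})=\lambda_{k}\varphi\psi_{k}$, whose coefficients are real analytic; the exponential decay in $k$ of $e^{-\lambda_{k}t}$ combined with Weyl-type asymptotics for $\lambda_{k}$ makes the series converge uniformly on compact subsets of $\Omega$ together with all its spatial derivatives, so that $e^{-t\mathcal{L}_\Omega}f$ is real analytic in $x$ for every $t>0$, and hence so is $w(\cdot,y)$ for every $y>0$. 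Excluding the trivial case $f\equiv 0$, the function $w(\cdot,y)$ is nonconstant on every connected component of $\Omega$, so its critical set $\{x\in\Omega:|\nabla_{x}w(x,y)|=0\}$ is a proper real analytic subvariety of $\Omega$ and therefore has Lebesgue measure zero; the boundedness of the Gaussian density then yields \eqref{cond aggiuntiva}, in fact in a form stronger than stated. The main technical obstacle in executing this plan is precisely the passage from $L^{2}$-convergence of the eigenfunction expansion to $C^{k}$-convergence on compact subsets of $\Omega$, which requires combining the spectral decay with uniform $C^{k}$-bounds on the $\psi_{k}$ that are standard for the Ornstein--Uhlenbeck spectral problem but must be invoked with care.
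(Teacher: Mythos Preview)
Your proposal is correct in outline and follows essentially the same strategy as the paper: verify the hypotheses of Corollary~\ref{Secondderivform} by establishing (i) interior smoothness and uniform Lipschitz control in $y$ of $w\varphi$ and $(\partial_y w)\varphi$, and (ii) the non-degeneracy condition \eqref{cond aggiuntiva} via real analyticity of $w$ in $x$.

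For (i), the paper works with the Bessel function series in the first line of \eqref{eq:semigroup formula f} and the asymptotics of $\mathcal{K}_\nu$ to bound $\partial_y w$ and $\partial_{yy}w$ directly, whereas you use the integral representation in the second line and dominate the $t$-integrand; the two computations are equivalent and your version is equally valid.

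For (ii), however, the paper takes a much shorter route that you should be aware of. Since $w$ is a smooth solution of the elliptic equation $-\operatorname{div}(y^{a}\varphi(x)\nabla_{x,y}w)=0$ in $\mathcal{C}_\Omega$, whose coefficients $y^{a}$ and $\varphi(x)=(2\pi)^{-n/2}e^{-|x|^{2}/2}$ are real analytic on $\Omega\times(0,\infty)$, the classical theorem on analyticity of solutions to elliptic equations with analytic coefficients (the paper cites \cite{Hashimoto}) immediately yields that $w$ is real analytic in $(x,y)$, and \eqref{cond aggiuntiva} follows at once. Your approach via the eigenfunction expansion is in principle workable but heavier, and the obstacle is slightly more serious than you state: uniform $C^{k}$-convergence on compacta of a series of real analytic functions gives a $C^{\infty}$ limit, not an analytic one; to recover analyticity you would need uniform Cauchy-type estimates of the form $|D_x^{\alpha}\psi_k|\leq C_k\,M^{|\alpha|}\alpha!$ with growth of $C_k$ beaten by $e^{-\lambda_k t}$. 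This can be arranged, but the paper's direct PDE argument avoids the issue entirely and requires no eigenfunction bounds or structural assumptions on $\Omega$.
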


\begin{proof}
Since $w\in C^\infty(\mathcal{C}_{\Omega})$,
by classical results on solutions of elliptic equations with analytic
coefficients (see for instance \cite{Hashimoto}),
$w$ is analytic. Hence condition \eqref{cond aggiuntiva} holds.
Next we have to show that the functions $w(x,y)\varphi
(x)$ and $\partial_{y}w(x,y)\varphi(x)$ are Lipschitz in $y\in(\varepsilon,T)$,
uniformly with respect to $x\in \Omega$. This follows because it is known that the solution
to the extension problem has the regularity $w\in  C^\infty((0,\infty);\mathcal{H}^s(\Omega,\gamma))$,
see \cite{Gale-Miana-Stinga, Stinga-Torrea}.
For the sake of completeness, we also give a direct proof of this regularity result.
By Theorem \ref{existweaksollin} and using the well known identity
$\frac{d}{dt}\big(t^\nu\mathcal{K}_\nu(t)\big)=-t^\nu\mathcal{K}_{\nu-1}(t)$, for $\nu\in\R$, it follows that
\[
\partial_{y}w=-C_s\sum_{k=1}^{\infty}(\lambda_k^{1/2}y)^s\mathcal{K}_{s-1}(\lambda_k^{1/2}y)
\frac{\langle f,\psi_{k}\rangle_{L^{2}(\Omega,\gamma)}}{\lambda_k^{s-1/2}}\psi_{k}(x)
\]
and
\[
\partial_{yy}w=-C_s\sum_{k=1}^{\infty}\big[(\lambda_k^{1/2}y)^{s-1}\mathcal{K}_{s-1}(\lambda_k^{1/2}y)
-(\lambda_k^{1/2}y)^{s-1}\mathcal{K}_{s-2}(\lambda_k^{1/2}y)y\big]
\frac{\langle f,\psi_{k}\rangle_{L^{2}(\Omega,\gamma)}}{\lambda_k^{s-1}}\psi_{k}(x).
\]
Then, as $\mathcal{K}_\nu(t)\sim\sqrt{\frac{\pi}{2t}}e^{-t}$, as $t\to\infty$, and
$\mathcal{K}_\nu(t)\sim C_\nu t^{-\nu}$, as $t\to0$, we get
\begin{align*}
\int_{0}^{\infty}y^a\int_{\Omega}|\partial_{y}w|^{2}\,d\gamma(x)\,dy
&=C_s\sum_{k=1}^\infty\bigg[\int_0^\infty y^a|(\lambda_k^{1/2}y)^s\mathcal{K}_{s-1}(\lambda_k^{1/2}y)|^2\,dy\bigg]
\frac{|\langle f,\psi_{k}\rangle_{L^{2}(\Omega,\gamma)}|^2}{\lambda_k^{2s-1}} \\
&=C_s\sum_{k=1}^\infty\bigg[\int_0^\infty r|\mathcal{K}_{s-1}(r)|^2\,dr\bigg]
\frac{|\langle f,\psi_{k}\rangle_{L^{2}(\Omega,\gamma)}|^2}{\lambda_k^{s}}
\leq \frac{C_s}{\lambda_{1}^{s}}\|f\|^{2}_{L^{2}(\Omega,\gamma)},
\end{align*}
and $w(x,y)\varphi(x)$ is Lipschitz with respect to $y\in(0,\infty)$, uniformly in $x$.
On the other hand,
\begin{align*}
\int_{\e}^{\infty}&y^a\int_{\Omega}|\partial_{yy}w|^{2}\,d\gamma(x)\,dy \\
&\leq C_{s,\varepsilon}\sum_{k=1}^{\infty}\int_\e^\infty y^a|(\lambda_k^{1/2}y)^{s-1}[(\lambda_k^{1/2}y)^{-1/2}
e^{-\lambda_k^{1/2}y}(1+y)]|^2\,dy
\frac{|\langle f,\psi_{k}\rangle_{L^{2}(\Omega,\gamma)}|^2}{\lambda_k^{2s-2}}\\
&\leq C_{s,\varepsilon}\sum_{k=1}^{\infty}\int_\e^\infty y^{-2}e^{-2\lambda_k^{1/2}y}(1+y)^2\,dy
\frac{|\langle f,\psi_{k}\rangle_{L^{2}(\Omega,\gamma)}|^2}{\lambda_k^{s-1/2}}\\
&\leq C_{s,\varepsilon}\sum_{k=1}^{\infty}\int_\e^\infty e^{-2\lambda_k^{1/2}y}\,dy
\frac{|\langle f,\psi_{k}\rangle_{L^{2}(\Omega,\gamma)}|^2}{\lambda_k^{s-1/2}}\\
&= C_{s,\varepsilon}\sum_{k=1}^{\infty}\frac{e^{-2\e\lambda_{k}^{1/2}}}{\lambda_k^s}
|\langle f,\psi_{k}\rangle_{L^{2}(\Omega,\gamma)}|^{2}\leq
\frac{C_{s,\varepsilon}}{\lambda_1^s}\|f\|^{2}_{L^{2}(\Omega,\gamma)}.
\end{align*}
Hence $\partial_{y}w(x,y)\varphi(x)$ is Lipschitz
with respect to $y\in(\e,\infty)$, uniformly in $x\in\Omega$.
\end{proof}

%%%%%%%%%%%%%%%%%%%%%%%%%%%%%%%%%%%%%%%%%%%%%%
\section{The comparison result}\label{Section:comparison}
%%%%%%%%%%%%%%%%%%%%%%%%%%%%%%%%%%%%%%%%%%%%%%

With the previous results at hand, we are now in position to prove the main result of the paper.

\begin{theorem}[Comparison result]\label{primoteoremadiconfronto}
Let $\Omega$ be an open subset of $\mathbb{R}^{n}$ with $\gamma(\Omega)<1$.
Let $u$ and $\psi$ be the weak solutions to \eqref{Problema} and \eqref{problema simm}, respectively,
with $f\in L^{2}(\Omega,\gamma)$. Then
\begin{equation}\int_{0}^{r}u^{\circledast}(\sigma)\,d\sigma\leq\int_{0}^{r}\psi^{\circledast}(\sigma)\,d\sigma,\label{confronto}
\quad\hbox{for all}~0\leq r\leq \gamma(\Omega),\end{equation}
that is,
\[
u^{\displaystyle\star}\prec  \psi.
\]
\end{theorem}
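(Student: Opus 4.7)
The plan is to lift the comparison to the extension problems \eqref{Problema estensione} and \eqref{Problema estensione sim}, derive a degenerate elliptic differential inequality for the concentration function
\[
Z(r,y) := \int_0^r w^{\circledast}(\sigma,y)\,d\sigma - \int_0^r v^{\circledast}(\sigma,y)\,d\sigma,
\]
where $w,v$ are the respective extensions of $u,\psi$, and conclude $Z(r,0)\le 0$ via a maximum principle. A preliminary observation is that $v = v^{\displaystyle\star}$: since $\Omega^{\displaystyle\star}$ is a half-space, $f^{\displaystyle\star}$ is Gaussian Steiner symmetric in $x$, and the extension PDE is invariant under reflections in $x$-hyperplanes orthogonal to $e_1$, uniqueness together with an extension-problem analogue of Proposition \ref{prop chiacchio} forces $v(\cdot,y)$ to be Gaussian Steiner symmetric for every $y>0$. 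In particular, each level set $\{v(\cdot,y)>t\}$ is a half-space and $|\nabla_x v|$ is constant along it.

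Fix $y>0$ and rewrite the extension equation as $-\operatorname{div}_x(\varphi\nabla_x w) = \varphi\big(w_{yy} + (a/y)w_y\big)$. Integrating over $E_{r,y}:=\{x\in\Omega : w(x,y)>w^{\circledast}(r,y)\}$, the divergence theorem converts the left side into $\int_{\partial E_{r,y}}|\nabla_x w|\,\varphi\,d\mathcal{H}^{n-1}$, which via Cauchy--Schwarz, \eqref{derivatrearrang}, and the Gaussian isoperimetric inequality \eqref{dis isop} is bounded below by $-(2\pi)^{-1}e^{-[\Phi^{-1}(r)]^2}W_{rr}(r,y)$, with $W(r,y):=\int_0^r w^{\circledast}(\sigma,y)\,d\sigma$. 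For the right-hand side, the first-order derivation formula \eqref{Rakotoson derivation formula} yields $\int_{E_{r,y}} w_y\,d\gamma = W_y$, while Corollary \ref{Secondderivform}, combined with the sign remark following \eqref{Der2}, gives $\int_{E_{r,y}} w_{yy}\,d\gamma \le W_{yy}$; Lemma \ref{lemma regolarita} ensures the hypotheses of Corollary \ref{Secondderivform} are satisfied. Combining these ingredients,
\[
-\frac{1}{2\pi}\,e^{-[\Phi^{-1}(r)]^2}\,W_{rr}\le W_{yy}+\frac{a}{y}W_y.
\]
Repeating the computation for $V(r,y):=\int_0^r v^{\circledast}(\sigma,y)\,d\sigma$ with $v=v^{\displaystyle\star}$, each inequality becomes an equality (Cauchy--Schwarz saturates because $|\nabla_x v|$ is constant on level sets, the isoperimetric inequality is attained on half-spaces, and the two surface integrals in \eqref{Der2} cancel), producing the corresponding equation with equality for $V$.

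Subtracting, $Z=W-V$ satisfies a differential inequality associated to the degenerate elliptic operator with principal part $\partial_y(y^a\partial_y\cdot)+y^a(2\pi)^{-1}e^{-[\Phi^{-1}(r)]^2}\partial_{rr}\cdot$ on $(0,\gamma(\Omega))\times(0,\infty)$. The relevant boundary data are: $Z(0,y)=0$ for every $y>0$; $Z_r(\gamma(\Omega),y)=w^{\circledast}(\gamma(\Omega),y)-v^{\circledast}(\gamma(\Omega),y)=0$ since both $w$ and $v$ vanish on the lateral boundaries of their cylinders; $Z(r,y)\to 0$ as $y\to\infty$; and the weighted Neumann inequality $-\lim_{y\to 0^+} y^a Z_y(r,y)\le 0$, obtained by passing to the limit under the first-order derivation formula and invoking the Neumann conditions $-\lim_{y\to 0^+}y^a w_y=f$, $-\lim_{y\to 0^+}y^a v_y=f^{\displaystyle\star}$, together with the Hardy--Littlewood inequality \eqref{Hardy-Litt}. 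A weak maximum principle tailored to this $A_2$-weighted operator then forces $Z\le 0$ everywhere, and evaluation at $y=0$ yields \eqref{confronto} upon recalling $W(r,0)=\int_0^r u^{\circledast}$, $V(r,0)=\int_0^r\psi^{\circledast}$.

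The main technical obstacle lies in the delicate passage $y\to 0^+$: Corollary \ref{Secondderivform} is valid only on strips $(\varepsilon,T)$, so the differential inequality must first be established away from $y=0$ and then propagated to the trace using the regularity afforded by Lemma \ref{lemma regolarita} together with uniform estimates on the superlevel-set integrals. A secondary but nontrivial point is the justification of the weak maximum principle for the $y^a$-weighted operator with the mixed Dirichlet/Neumann/decay data above; this, however, is accessible through the standard Muckenhoupt $A_2$-theory since $|a|<1$.
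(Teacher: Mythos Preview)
Your proposal is correct and follows essentially the same strategy as the paper: lift to the extension problems, derive a differential inequality for the concentration function $W$ via the isoperimetric inequality combined with the second-order derivation formula, observe that $V$ satisfies the corresponding identity because $v=v^{\displaystyle\star}$, and conclude $Z=W-V\le 0$ by an energy/maximum-principle argument.

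The one substantive presentational difference is that the paper first performs the change of variables $y=(2s)z^{1/(2s)}$, which turns $w_{yy}+\tfrac{a}{y}w_y$ into $z^{2-1/s}w_{zz}$ and so eliminates the first-order term.  With the inequality written as
\[
-p(r)^{-1}Z_{zz}-z^{-2+1/s}Z_{rr}\le 0,
\]
the coefficients separate (each depends on only one variable), and multiplying by $Z_{+}$ and integrating by parts becomes completely elementary---no reference to $A_2$-theory is needed.  Your approach, keeping the $y$-variable and the divergence form $y^{-a}\partial_y(y^a\partial_y\,\cdot)$, is equivalent: after dividing by $p(r)$ the coefficients again separate, and the same integration by parts goes through, with the weighted Neumann condition $-\lim_{y\to 0^+}y^aZ_y\le 0$ giving the favorable sign on the boundary term.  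The paper's change of variables simply trades that mildly delicate weighted boundary limit for an ordinary one ($\partial_zZ(r,0)\ge 0$).  The paper also establishes $v=v^{\displaystyle\star}$ by applying Proposition~\ref{prop chiacchio} to $e^{-t\mathcal{L}_{\Omega^{\star}}}f^{\displaystyle\star}$ and then invoking the semigroup representation \eqref{eq:semigroup formula f}, rather than arguing by symmetry and uniqueness directly on the extension PDE as you do; both arguments are valid.
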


\begin{proof}
By making the change of variables $y=(2s)z^{1/(2s)}$ (see \cite{Caffarelli-Silvestre}), we can write the extension problems
\eqref{Problema estensione} and \eqref{Problema estensione sim} as
\begin{equation}\label{Problema estensione z}
\begin{cases}
-\mathcal{L}w+z^{2-1/s}w_{zz}=0,&\hbox{in}~\mathcal{C}_{\Omega},\\
w=0,&\hbox{on}~\partial_{L}\mathcal{C}_{\Omega},\\
-\underset{z\rightarrow0^{+}}{\lim}w_z=d_sf,&\hbox{on}~\Omega.
\end{cases}
\end{equation}
and
\begin{equation}\label{Problema estensione sim z}
\begin{cases}
-\mathcal{L}v+z^{2-1/s}v_{zz}=0,&\hbox{in}~\mathcal{C}_{\Omega}^{\displaystyle\star},\\
v=0,&\hbox{on}~\partial_{L}\mathcal{C}_{\Omega}^{\displaystyle\star},\\
-\underset{z\rightarrow0^{+}}{\lim}v_z=d_sf^{\displaystyle\star},&\hbox{on}~\Omega^{\displaystyle\star},
\end{cases}
\end{equation}
for some explicit constant $d_s>0$, respectively.
Now, since $u$ is the trace on $\Omega$ of the
solution $w$ to \eqref{Problema estensione z} and $\psi$ is the trace on $\Omega^{\bigstar}$ of the solution $v$ to
\eqref{Problema estensione sim z}, the result will immediately
follow once we prove the concentration comparison inequality
\begin{equation}
\int_{0}^{r}w^{\circledast}(\sigma,z)\,d\sigma\leq\int_{0}^{r}v^{\circledast}(\sigma,z)\,d\sigma,
\quad\hbox{for all}~0\leq r\leq\gamma(\Omega),~\hbox{for any fixed}~z\geq0.
\label{concentrazione}%
\end{equation}
We recall that $w$ is smooth for any $z>0$. For a fixed
$z>0$ and $t>0$, let
\[\varsigma_{h}^{z}(x):=
\begin{cases}
\mathrm{sign}\,w(x,z),&\hbox{if}~|w(x,z)|\geq t+h,\\ \medskip
\dfrac{|w(x,z)|-t}{h}\,\mathrm{sign}\,w,&\hbox{if}~t<|w(x,z)|<t+h,\\
0,&\hbox{otherwise}.
\end{cases}\]
By multiplying the first equation in \eqref{Problema estensione z} by $\varsigma_{h}^{z}(x)$
and integrating over $\Omega$ with respect to the Gaussian measure, we obtain
\begin{align*}
\frac{1}{h}\int_{\left\{x:\,t<|w(x,z)|<t+h\right\}}|\nabla_{x}w|^{2}\,d\gamma&-\frac{z^{2-1/s}}{h}
\int_{\left\{x:\,|w(x,z)|>t+h\right\}}\frac{\partial^{2}w}{\partial z^{2}}\,d\gamma\\
&-\frac{z^{2-1/s}}{h}\int_{\left\{x:\,t<|w(x,z)|<t+h\right\}}\frac{\partial^{2}w}{\partial
z^{2}}(|w|-t)\,\mathrm{sign}\,w\,d\gamma=0.
\end{align*}
Letting $h\rightarrow0$ we obtain%
\begin{equation}
-\frac{\partial}{\partial t}\int_{\left\{x:\,\left\vert w(x,z)\right\vert >t\right\}}\left\vert \nabla_{x}%
w\right\vert ^{2}\,d\gamma(x)-z^{2-1/s}\int_{\left\{x:\,\left\vert w(x,z)\right\vert >t\right\}}\frac
{\partial^{2}w}{\partial z^{2}}\,d\gamma(x)=0. \label{sostituendo}%
\end{equation}
\noindent On the other hand, the coarea formula \eqref{coarea} and the isoperimetric
inequality with respect to the Gaussian measure \eqref{dis isop} give
\begin{equation}
-\frac{\partial}{\partial t}\int_{\left\{x:\,\left\vert w(x,z)\right\vert >t\right\}}\!\left\vert \nabla_x
w\right\vert \, d\gamma(x)\geq\int_{\partial\left\{  x:\,\left\vert
w(x,z)\right\vert >t\right\}^{\star}}\varphi(x)\,d\mathcal{H}^{n-1}%
(x)=\frac{1}{\sqrt{2\pi}}\exp\Bigg(-\frac{\big[\Phi^{-1}\big(
\gamma_{w}(t)\big)\big]^{2}}{2}\Bigg)  ,
\label{isope}%
\end{equation}
where $\left\{x:\,  \left\vert w(x,z)\right\vert >t\right\}  ^{\displaystyle\star}$ is the
half-space having Gauss measure $\gamma_{w}(t)$.
By H\"{o}lder's inequality,
$$\frac{1}{h}\int_{\left\{x:\,t<|w(x,z)|<t+h\right\}}|\nabla_x w|\,d\gamma(x)\leq\bigg(\frac{1}{h}
\int_{\left\{x:\,t<|w(x,z)|<t+h\right\}}|\nabla_x w|^2\,d\gamma(x)\bigg)^{1/2}\bigg(\frac{1}{h}\int_{\left\{x:\,t<|w(x,z)|<t+h\right\}}\,d\gamma(x)\bigg)^{1/2},$$
for any $h>0$. Hence, by taking $h\to0$,
\[
-\frac{\partial}{\partial t}\int_{\left\{x:\,\left\vert w(x,z)\right\vert >t\right\}}\left\vert \nabla_{x}%
w\right\vert \ d\gamma(x)\leq\left(  -\frac{\partial}{\partial t}\int_{\left\{x:\,\left\vert w(x,z)\right\vert >t\right\}}
\left\vert \nabla_{x}w\right\vert ^{2}\, d\gamma(x)\right)
^{1/2}\left(  -\frac{\partial}{\partial t}\int_{\left\{x:\,\left\vert w(x,z)\right\vert >t\right\}}%
\, d\gamma(x)\right)  ^{1/2}.
\]
Then \eqref{isope} yields
\begin{equation}
-\frac{\partial}{\partial t}\int_{\left\{x:\,\left\vert w(x,z)\right\vert >t\right\}}\left\vert \nabla_{x}%
w\right\vert ^{2}\, d\gamma(x)\geq\frac{1}{2\pi}\left(  -\gamma_{w}^{\prime
}(t)\right)  ^{-1}\exp\!\left(  -\left[  \Phi^{-1}\big(\gamma_{w}(t)\big)  \right]  ^{2}\right)  . \label{isop+holder}%
\end{equation}
By plugging \eqref{isop+holder} into \eqref{sostituendo} we have
$$-z^{2-1/s}\int_{\left\{x:\,\left\vert w(x,z)\right\vert >t\right\}}\frac{\partial^{2}w}{\partial z^{2}%
}\ d\gamma(x)-\!\frac{1}{2\pi}\left(  \gamma_{w}^{\prime}(t)\right)
^{-1}\exp\left(  -\left[  \Phi^{-1}\big(\gamma_{w}(t)
\big)\right]  ^{2}\right)  \leq0.$$
Now we set
\[
W(r,y):=\int_{0}^{r}w^{\circledast}(\sigma,z)\,d\sigma.
\]
Using Lemma \ref{lemma regolarita} and the second order derivation formula (\ref{Der2})
we
find that $W$ verifies the following differential inequality%
\begin{equation}
-z^{2-1/s}\frac{\partial^{2}W}{\partial z^{2}}-p(r)\frac{\partial^{2}W}{\partial r^{2}%
}\leq0 \label{eq W}%
\end{equation}
for a.e.~$(r,z)\in(0,\gamma(\Omega))\times(0,\infty)$, where $p(r)=\frac{1}{2\pi}\exp(-[\Phi^{-1}(r)]^{2})$.
Moreover, the first order derivation formula
\eqref{Rakotoson derivation formula} implies%
\[
\frac{\partial W}{\partial z}(r,z)=\frac{\partial}{\partial z}\int
_{\{x:\,w(x,z)>w^{\circledast}(r,z)\}}w(x,z)\,d\gamma(x)=\int_{\{x:\,w(x,z)>w^{\circledast
}(r,z)\}}\frac{\partial}{\partial z}w(x,z)\,d\gamma(x).
\]
Then, by the Hardy--Littlewood inequality \eqref{Hardy-Litt}, we easily infer
\begin{align*}
\frac{\partial W}{\partial z}(r,0)  &  =\int_{\{x:\,w(x,0)>w^{\circledast}(r,0)\}}%
\frac{\partial w}{\partial z}(x,0)\, d\gamma(x)=-d_s\int_{\{x:\,u(x)>u^{\circledast}%
(r)\}}f(x)\, d\gamma(x)\\
&  \geq-d_s\int_{0}^{r}f^{\circledast}(\sigma)\,d\sigma,\quad\hbox{for}~r\in(0,\gamma(\Omega)).
\end{align*}
Therefore $W$ satisfies the following boundary conditions%
\begin{align*}
W(0,z)&  =0,\text{ \ \ } z\in\left[  0,\infty\right), \\
\frac{\partial W}{\partial r}(\gamma(\Omega),z)  &  =0,\text{ \ \ }
z\in\left[  0,\infty\right), \\
\frac{\partial W}{\partial z}(r,0)  &  \geq-d_s\int_{0}^{r}f^{\circledast}%
(\sigma)\,d\sigma,\text{ \ for }r\in(0,\gamma(\Omega)).
\end{align*}
Next let us turn our attention to problem \eqref{Problema estensione sim}. By Proposition \ref{prop chiacchio},
it follows that the function
$\displaystyle\eta(x,t):=\big(e^{-t(\mathcal{L}_{\Omega^{\displaystyle\star}})}f^{\displaystyle\star}\big)(x)$,
is rearranged with respect to $x$, that is, $\eta(x,t)=\eta^{\displaystyle\star}(x,t)$. Recall the semigroup formula \eqref{eq:semigroup formula f}:
\[
v(x,y)=\frac{1}{\Gamma(s)}\int_{0}^{\infty}e^{-y^{2}/(4t)}
\eta(x,t)\,\frac{dt}{t^{1-s}}.
\]
It is then clear that (even after the change of variables $y=(2s)z^{1/(2s)}$)
$v$ is rearranged with respect to $x$ as well,
namely, $v(x,z)=v^{\displaystyle\star}(x,z)$. This implies that the level sets of
$v(\cdot,z)$ are half-spaces, which gives in turn that all the inequalities involved
in the symmetrization arguments for the solution $u$ we performed above
become equalities for $v$. Therefore, if
$$V(r,z):=\int_{0}^{r}v^{\circledast}(\sigma,z)\,d\sigma,$$
then
\begin{equation}
-z^{2-1/s}\frac{\partial^{2}V}{\partial z^{2}}-p(r)\frac{\partial^{2}%
V}{\partial r^{2}}=0. \label{eq V}
\end{equation}
Regarding the boundary conditions, we have
\begin{align*}
\frac{\partial V}{\partial z}(r,0)  &  =-d_s\int_{\{x:\,\psi(x_{1})>\psi^{\circledast}(r)\}}f^{\bigstar}(x)\, d\gamma(x)\\
&  =-d_s\int_{\Phi^{-1}(r)}^{\infty}f^{\circledast}(\Phi^{-1}(x_{1}))\, d\gamma(x)\\
&  =-d_s\int_{0}^{r}f^{\circledast}(\sigma)\,d\sigma,\quad\hbox{for}~r\in(0,\gamma(\Omega)).
\end{align*}
Then $V$ satisfies:%
\begin{align*}
V(0,z)  &  =0,\text{ \ \ } z\in[0,\infty), \\
\frac{\partial V}{\partial r}(\gamma(\Omega),z)  &  =0,\text{ \ \ }
z\in[0,\infty), \\
\frac{\partial V}{\partial z}(r,0)  &  =-d_s\int_{0}^{r}f^{\circledast}%
(\sigma)\,d\sigma,\text{ \ for }r\in(0,\gamma(\Omega)).
\end{align*}
If we put $\displaystyle Z(r,z):=W(r,z)-V(r,z)=\int_{0}^{r}[w^{\circledast}(\sigma,z)-v^{\circledast}(\sigma,z)]\,d\sigma$,
then \eqref{eq W} and \eqref{eq V} imply that $Z$ is a subsolution to
\begin{equation}
-z^{2-1/s}\frac{\partial^{2}Z}{\partial z^{2}}-p(r)\frac{\partial^{2}%
Z}{\partial r^{2}}\leq0,\label{subsolution}
\end{equation}
for a.e.~$(r,z)\in(0,\gamma(\Omega))\times(0,\infty)$,
together with the following boundary conditions
\begin{align}
Z(0,z)  &  =0,\text{ \ \ }z\in[0,\infty),\nonumber \\
\frac{\partial Z}{\partial r}(\gamma(\Omega),z)  &  =0,\text{ \ \ }\label{boundcond}
z\in[0,\infty), \\
\frac{\partial Z}{\partial z}(r,0)  &  \geq0,\text{ \ for }r\in(0,\gamma(\Omega)).\nonumber
\end{align}
Moreover, since $\|w(\cdot,z)\|_{L^{2}(\Omega,\gamma)},\|v(\cdot,z)\|_{L^{2}(\Omega^{\displaystyle\star},\gamma)}
\rightarrow0$, as $z\rightarrow\infty$, we have
$Z(r,z)\rightarrow 0$, as $z\rightarrow\infty$, uniformly in $r$. Now we
claim that $Z\leq 0$ in $[0,\gamma(\Omega)]\times[0,\infty)$.
Indeed, observe that \eqref{subsolution} can be rewritten as
\[
-p(r)^{-1}\frac{\partial^2Z}{\partial z^2}-z^{-2+1/s}
\frac{\partial^2Z}{\partial r^2}\leq0.
\]
Therefore, by multiplying both sides by $Z_{+}$, the positive part of $Z$, and integrating by parts over the strip $(0,\gamma(\Omega))\times(0,\infty)$, the boundary conditions
\eqref{boundcond}
and the fact that $Z(r,z)\rightarrow0$ as $z\rightarrow\infty$ imply
\begin{multline*}
\int_{0}^{\gamma(\Omega)}p(r)^{-1}\frac{\partial Z}{\partial z}(r,0)Z_{+}(r,0)\,dr
+\int_{0}^{\infty}\int_{0}^{\gamma(\Omega)}z^{-2+1/s}\bigg|\frac{\partial Z_{+}}{\partial r}\bigg|^{2}\,dr\,dz\\
+\int_{0}^{\infty}\int_{0}^{\gamma(\Omega)}
p(r)^{-1}\bigg|\frac{\partial Z_{+}}{\partial z}\bigg|^{2}\,dr\,dz\leq0,
\end{multline*}
namely,
\[
\int_{0}^{\infty}\int_{0}^{\gamma(\Omega)}z^{-2+1/s}\bigg|\frac{\partial Z_{+}}{\partial r}\bigg|^{2}\,dr\,dz
+\int_{0}^{\infty}\int_{0}^{\gamma(\Omega)}
p(r)^{-1}\bigg|\frac{\partial Z_{+}}{\partial z}\bigg|^{2}\,dr\,dz\leq0.
\]
Thus $Z_{+}\equiv0$ and the concentration comparison inequality \eqref{concentrazione} follows.
\end{proof}

%%%%%%%%%%%%%%%%%%%%%%%%%%%%%%%%%%%%%%%%%%%%%%
\section{Regularity estimates}\label{Section:regularity}
%%%%%%%%%%%%%%%%%%%%%%%%%%%%%%%%%%%%%%%%%%%%%%

We first introduce the Zygmund spaces, which appear naturally in the regularity scale for solutions to elliptic
equations with Gaussian measure in the local setting, see \cite{dFP}. We refer the reader to the monograph \cite{Bennett} for details about all the related properties we will use for our purposes.

\begin{definition}[Zygmund spaces]
Let $1\leq p<\infty$ and $\alpha\in\R$. The Zygmund space $L^p(\log L)^\alpha(\Omega,\gamma)$ is defined as the space of all measurable functions $u:\Omega\to\R$ such that
$$\int_\Omega\big[|u(x)|\log^\alpha(2+|u(x)|)\big]^p\,d\gamma(x)<\infty.$$
\end{definition}
If $\alpha=0$ the Zygmund space $L^p(\log L)^0(\Omega,\gamma)$  coincides with the weighted space $L^p(\Omega,\gamma)$. Moreover, if $p>q$ and
$\alpha,\beta\in\R$ then
$$L^p(\log L)^\alpha(\Omega,\gamma)\subset L^q(\log L)^\beta(\Omega,\gamma).$$
When $p=q$ and $\alpha >\beta$ one can prove that
\begin{equation}
L^p(\log L)^\alpha(\Omega,\gamma)\subset L^p(\log L)^\beta(\Omega,\gamma).\label{secondembed}
\end{equation}

\begin{remark}
The Zygmund space $L^p(\log L)^\alpha(\Omega,\gamma)$ can be equivalently defined as the space of measurable functions $u:\Omega\to\R$ such that the quantity (which is a quasi-norm in this space)
\begin{equation}
\bigg(\int_0^{\gamma(\Omega)}\big[(1-\log t)^\alpha
u^{\circledast}(t)\big]^p\,dt\bigg)^{1/p}\label{qnorm}
\end{equation}
is finite. Moreover, $L^p(\log L)^\alpha(\Omega,\gamma)$ is a Banach space when equipped with the norm
\begin{equation}
\|u\|_{L^p(\log L)^\alpha(\Omega,\gamma)}^p=\int_0^{\gamma(\Omega)}\big[(1-\log t)^\alpha
u^{\circledast\circledast}(t)\big]^p\,dt,\label{normavera}
\end{equation}
where
\[
u^{\circledast\circledast}(t):=\frac{1}{t}\int_{0}^{t}u^{\circledast}(\sigma)\,d\sigma.
\]
We stress that the quasi-norm \eqref{qnorm} is equivalent to the norm \eqref{normavera}, see \cite{Bennett} for more details.
\end{remark}

The main result of this section is the following regularity result for solutions to the fractional
nonlocal OU problem \eqref{Problema} in terms of the data $f$.
Notice that when $s=1$ we recover the corresponding regularity results
for the OU equation via Gaussian symmetrization contained in \cite{dFP}.

\begin{theorem}[Regularity estimates]\label{thm:integrability}
Let $\Omega$ be an open subset of $\mathbb{R}^{n}$, $n\geq2$, such that $\gamma(\Omega)\leq1/2$.
Fix $0<s<1$.
If $f\in L^{p}(\log L)^\alpha(\Omega,\gamma)$, where $\alpha\in\R$ for $2< p<\infty$, and $\alpha\geq-\frac{s}{2}$ for $p=2$, then the solution $u$ to
\eqref{Problema} belongs to
$L^{p}(\log L)^{\alpha+s}(\Omega,\gamma)$ and
$$\|u\|_{L^{p}(\log L)^{\alpha+s}(\Omega,\gamma)}\leq C\|f\|_{L^{p}(\log L)^\alpha(\Omega,\gamma)},$$
for a positive constant $C=C(n,p,\alpha,s,\gamma(\Omega))$ which is independent on $u$ and $f$.
\end{theorem}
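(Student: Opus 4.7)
The plan is to combine the comparison result Theorem~\ref{primoteoremadiconfronto} with a direct Zygmund regularity estimate for the symmetrized solution $\psi$. The starting observation is that Theorem~\ref{primoteoremadiconfronto} gives $u^{\circledast\circledast}(r)\leq\psi^{\circledast\circledast}(r)$ for every $r\in(0,\gamma(\Omega))$, and since $\gamma(\Omega)\leq 1/2$ the weight $(1-\log t)^{p(\alpha+s)}$ is nonnegative on $(0,\gamma(\Omega))$; hence the Banach norm \eqref{normavera} is monotone with respect to concentration comparison, and
$$\|u\|_{L^p(\log L)^{\alpha+s}(\Omega,\gamma)}\leq\|\psi\|_{L^p(\log L)^{\alpha+s}(\Omega^{\displaystyle\star},\gamma)}.$$
Since rearrangement preserves $L^p(\log L)^\alpha$ norms, the theorem reduces to proving $\|\psi\|_{L^p(\log L)^{\alpha+s}(\Omega^{\displaystyle\star},\gamma)}\leq C\|f\|_{L^p(\log L)^\alpha(\Omega,\gamma)}$.

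Next I would exploit the explicit semigroup formula \eqref{eq:fractional integral} to write
$$\psi(x)=\frac{1}{\Gamma(s)}\int_0^\infty t^{s-1}\eta(x,t)\,dt,\qquad\eta(x,t):=e^{-t\mathcal{L}_{\Omega^{\displaystyle\star}}}f^{\displaystyle\star}(x).$$
By Proposition~\ref{prop chiacchio}, $\eta(\cdot,t)$ is rearranged for each $t\geq 0$; the nonnegativity of the integrand then yields the pointwise rearrangement bound
$$\psi^{\circledast}(r)\leq\frac{1}{\Gamma(s)}\int_0^\infty t^{s-1}\eta^{\circledast}(r,t)\,dt,\qquad r\in(0,\gamma(\Omega)).$$
Using the odd reflection technique of \eqref{est1}--\eqref{eq:eta}, adapted to the half-line $\{x_1>\Phi^{-1}(\gamma(\Omega))\}$, one represents $\eta$ concretely in terms of the one-dimensional Mehler kernel \eqref{eq:Mehler kernel} applied to $f^{\circledast}$, so the whole problem becomes a one-dimensional analysis.

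I would then split the time integral at $t=1$. For $t\geq 1$, the Gaussian Poincar\'{e} inequality \eqref{Poincare} provides a spectral gap for $\mathcal{L}_{\Omega^{\displaystyle\star}}$, giving the exponential decay $\|\eta(\cdot,t)\|_{L^p(\gamma)}\leq e^{-ct}\|f^{\displaystyle\star}\|_{L^p(\gamma)}$; since $\int_1^\infty t^{s-1}e^{-ct}\,dt<\infty$, this combined with the embedding \eqref{secondembed} handles the long-time range. For $0<t<1$ the sharp short-time bounds on the Mehler kernel, of the same type as those used in the Appendix for the plain $L^p(\Omega,\gamma)$ case, show that the action of $e^{-t\mathcal{L}_{\Omega^{\displaystyle\star}}}$ on the rearranged datum $f^{\displaystyle\star}$ produces, after integration against $t^{s-1}$ from $0$ to $1$, a kernel behaving like a one-dimensional Riesz-type fractional integral of order $s$; in the Gaussian rearrangement variable this is precisely what generates the logarithmic gain $(1-\log r)^s$, upgrading $(\log L)^\alpha$ to $(\log L)^{\alpha+s}$. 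A Minkowski inequality in time together with a one-dimensional weighted Hardy inequality in $r$ then closes the estimate.

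The main obstacle will be the short-time analysis on $0<t<1$: making the gain of $(1-\log r)^s$ quantitative in $r$ and $f$ requires careful Mehler kernel bounds on the half-line with Dirichlet conditions, and tracking the constants in the weighted Hardy step. The threshold case $p=2$, $\alpha=-s/2$ sits exactly on the boundary of integrability for that Hardy inequality, which is why the hypotheses dichotomize between $p>2$ and $p=2$.
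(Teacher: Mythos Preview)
Your reduction via Theorem~\ref{primoteoremadiconfronto} to an estimate for $\psi$ is exactly how the paper begins. After that the approaches diverge, and your outline has one genuine gap and one underspecified step.

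\textbf{The odd reflection does not adapt to $\{x_1>\omega\}$ with $\omega>0$.} The identity \eqref{eta}--\eqref{eq:eta} works because the OU operator $-\Delta+x\cdot\nabla$ commutes with the reflection $x_1\mapsto -x_1$: the drift term $x_1\partial_{x_1}$ maps odd functions to odd functions. For a reflection about $x_1=\omega\neq 0$ this fails, since $x_1\partial_{x_1}$ becomes $(x_1-2\omega)\partial_{x_1}$ after the change $x_1\mapsto 2\omega-x_1$. Hence there is no Mehler-type formula for $e^{-t\mathcal{L}_{\Omega^{\star}}}$ when $\gamma(\Omega)<1/2$, and the one-dimensional kernel analysis you propose cannot be launched directly on $\Omega^{\star}$. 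The paper handles this by an extra comparison step (Lemma~\ref{semispazi}): a parabolic maximum principle shows that the solution on $\Omega^{\star}$ is dominated by the solution on the larger half-space $H=\{x_1>0\}$ with the zero extension of $f^{\star}$ as datum. Only then is the odd reflection legitimate, yielding $\psi\leq\zeta=\mathcal{L}^{-s}\widetilde{h}$ on all of $\mathbb{R}^n$.

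\textbf{The short-time logarithmic gain is not what the Appendix provides.} In the Appendix the small-time range $0<t<c(p)$ is treated simply by the $L^p$ contractivity \eqref{Stima Lp semigruppo} and $\int_0^{c(p)}t^{s-1}\,dt<\infty$; no kernel analysis is done there, and no logarithmic improvement is extracted. The $(\log L)^\alpha\to(\log L)^{\alpha+s}$ gain genuinely requires the hypercontractive/ultracontractive smoothing of the OU semigroup and a nontrivial extrapolation argument. The paper does not attempt this by hand: once reduced to $\mathcal{L}^{-s}$ on $\mathbb{R}^n$, it invokes \cite[Theorem~5.7]{KM} as a black box (this is the content of Theorem~\ref{Regularity theorem}). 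Your proposed route via ``Riesz-type kernel plus weighted Hardy'' is plausible in spirit, but it amounts to reproving the relevant piece of \cite{KM}; as written it is a heuristic, not a proof, and the endpoint $p=2$, $\alpha=-s/2$ would require separate care.

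In summary: insert the maximum-principle reduction to $H=\{x_1>0\}$ before any reflection, and then either cite \cite{KM} for $\mathcal{L}^{-s}:L^p(\log L)^\alpha(\mathbb{R}^n,\gamma)\to L^p(\log L)^{\alpha+s}(\mathbb{R}^n,\gamma)$ as the paper does, or supply a complete argument for that mapping.
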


In order to prove Theorem \ref{thm:integrability} we will first show that the space $\mathcal{H}^{s}(\Omega,\gamma)$ is embedded in the Zygmund space $L^2(\log L)^{s/2}(\Omega,\gamma)$. This will allow us to choose the datum
$f$ in the dual space $L^2(\log L)^{-s/2}(\Omega,\gamma)$ in problem \eqref{Problema}. In this way
Definition \ref{Defprobest}
will still make sense and $u=w(\cdot,0)$, where $w$ is the solution to
\eqref{Problema estensione}, will be the unique weak solution to problem
\eqref{Problema}. Towards this end we introduce the fractional Gaussian Sobolev space $H^{s}(\Omega,\gamma)$ as the real interpolation space defined by
\[
H^{s}(\Omega,\gamma)=\left[  H^{1}(\Omega,\gamma),L^{2}(\Omega
,\gamma)\right]  _{1-s}.
\]

\begin{lemma}
For any $u\in\mathcal{H}^{s}(\Omega,\gamma)$ the following inequality holds
\begin{equation}
\int_{0}^{\gamma(\Omega)}[(1-  \log r)^{s/2}u^{\circledast}(r)]^{2}\,dr\leq C\Vert u\Vert_{\mathcal{H}^{s}(\Omega
,\gamma)}^2 \label{emblorentz}%
\end{equation}
where $C$ is a positive constant depending on $n,s$ and $\Omega$. In particular,
\[
\mathcal{H}^{s}(\Omega,\gamma)\hookrightarrow L^{2}(\log L)^{s/2}(\Omega,\gamma).
\]
\end{lemma}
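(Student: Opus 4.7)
The plan is to combine the real interpolation identity \eqref{Gaussianintsp} with an endpoint Sobolev embedding at the $H^{1}$-level. If we first establish the endpoint embedding
$$H_{0}^{1}(\Omega,\gamma)\hookrightarrow L^{2}(\log L)^{1/2}(\Omega,\gamma),$$
then, together with the trivial identity $L^{2}(\Omega,\gamma)=L^{2}(\log L)^{0}(\Omega,\gamma)$ and the standard real interpolation identity for Lorentz--Zygmund couples (see \cite{Bennett})
$$[L^{2}(\log L)^{1/2}(\Omega,\gamma),\,L^{2}(\Omega,\gamma)]_{1-s}=L^{2}(\log L)^{s/2}(\Omega,\gamma),$$
the functoriality of real interpolation will yield
$$\mathcal{H}^{s}(\Omega,\gamma)=[H_{0}^{1}(\Omega,\gamma),\,L^{2}(\Omega,\gamma)]_{1-s}\hookrightarrow L^{2}(\log L)^{s/2}(\Omega,\gamma),$$
and \eqref{emblorentz} follows after passing to the equivalent quasi-norm \eqref{qnorm}.

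To prove the endpoint embedding, I would reduce to one dimension via the Gaussian P\'olya--Szeg\H{o} principle: for $u\in H_{0}^{1}(\Omega,\gamma)$, the rearrangement $u^{\displaystyle\star}(x)=u^{\circledast}(\Phi(x_{1}))$ depends only on $x_{1}$, and a change of variables $r=\Phi(x_{1})$ yields
$$\int_{0}^{\gamma(\Omega)}\varphi_{1}(\Phi^{-1}(r))^{2}\,\big|(u^{\circledast})^{\prime}(r)\big|^{2}\,dr\leq\int_{\Omega}|\nabla u|^{2}\,d\gamma(x),$$
where $\varphi_{1}(\lambda)=(2\pi)^{-1/2}e^{-\lambda^{2}/2}$ is the one-dimensional Gaussian density. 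Using the Mills-ratio asymptotic $\varphi_{1}(\Phi^{-1}(r))\sim r\sqrt{2\log(1/r)}$ as $r\to 0^{+}$, the target estimate $\int_{0}^{\gamma(\Omega)}(1-\log r)\,u^{\circledast}(r)^{2}\,dr\leq C\,\|u\|_{H_{0}^{1}(\Omega,\gamma)}^{2}$ reduces to a one-dimensional weighted Hardy inequality for the non-increasing function $u^{\circledast}$ vanishing at $r=\gamma(\Omega)$. Its validity follows from the Muckenhoupt--Bradley criterion, since
$$\Big(\int_{0}^{r}(1-\log\sigma)\,d\sigma\Big)\Big(\int_{r}^{\gamma(\Omega)}\varphi_{1}(\Phi^{-1}(\tau))^{-2}\,d\tau\Big)\sim\big(r|\log r|\big)\cdot\big(r|\log r|\big)^{-1}=O(1)$$
as $r\to 0^{+}$.

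The hard part will be the precise identification of the interpolation space $[L^{2}(\log L)^{1/2},\,L^{2}]_{1-s}$ with $L^{2}(\log L)^{s/2}$, since this requires computing the $K$-functional for the Lorentz--Zygmund couple and carefully invoking a characterization of such spaces through the quasi-norm \eqref{qnorm}. An alternative, more self-contained route would bypass the interpolation machinery and prove \eqref{emblorentz} directly from the spectral identity $\|u\|_{\mathcal{H}^{s}(\Omega,\gamma)}^{2}=\sum_{k}\lambda_{k}^{s}|\langle u,\psi_{k}\rangle_{L^{2}(\Omega,\gamma)}|^{2}$ combined with suitable rearrangement estimates for the Dirichlet eigenfunctions $\psi_{k}$ of $\mathcal{L}$; that path, however, looks considerably more delicate in the Gaussian setting than the interpolation route sketched above.
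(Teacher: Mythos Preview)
Your approach is genuinely different from the paper's. The paper extends $u$ by zero to all of $\mathbb{R}^{n}$, identifies $H^{s}(\mathbb{R}^{n},\gamma)$ with the Gaussian Besov space $B^{s}(\mathbb{R}^{n},\gamma)$ via \cite{Nikitin}, applies the Sobolev--type embedding of Mart\'{\i}n--Milman \cite[Theorem~23]{MaMi} to obtain \eqref{emblorentz} on $(0,1/2)$, and finally uses the Exact Interpolation Theorem to control the norm of the zero--extension map $\mathcal{H}^{s}(\Omega,\gamma)\to H^{s}(\mathbb{R}^{n},\gamma)$. In other words, the paper outsources the hard analysis to two external references and never touches the endpoint case $s=1$ directly.

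Your route---endpoint embedding $H_{0}^{1}(\Omega,\gamma)\hookrightarrow L^{2}(\log L)^{1/2}(\Omega,\gamma)$ plus functoriality of real interpolation---is viable and arguably more transparent, since it explains why the logarithmic exponent scales linearly in $s$. The P\'olya--Szeg\H{o}/Hardy argument for the endpoint is essentially correct (Gaussian P\'olya--Szeg\H{o} is in \cite{Eh2}, and your Muckenhoupt check is right). The step you flag as ``the hard part'' is indeed the crux, and the bare citation to \cite{Bennett} is not enough: when both endpoints share the same primary index $p=2$, the identity $[L^{2}(\log L)^{1/2},L^{2}]_{1-s,2}=L^{2}(\log L)^{s/2}$ does not come from a standard Lorentz--Zygmund table. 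The clean justification is this: for a couple of rearrangement--invariant spaces over a nonatomic measure space, the $K$--functional at $f$ equals the $K$--functional at $f^{\circledast}$ for the corresponding representation spaces on $(0,\gamma(\Omega))$ (transport decompositions through a measure--preserving map $\sigma$ with $|f|=f^{\circledast}\circ\sigma$); this reduces the question to the weighted couple $\big(L^{2}((1-\log t)\,dt),\,L^{2}(dt)\big)$ on $(0,\gamma(\Omega))$, where Stein--Weiss gives $(L^{2}(w_{0}),L^{2}(w_{1}))_{\theta,2}=L^{2}(w_{0}^{1-\theta}w_{1}^{\theta})$, hence $L^{2}((1-\log t)^{s}\,dt)$, which is exactly the quasi--norm \eqref{qnorm} of $L^{2}(\log L)^{s/2}$. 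You should write this out rather than leave it as a reference; once you do, your argument is complete and self--contained, whereas the paper's proof leans on the Besov identification and the Mart\'{\i}n--Milman machinery.
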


\begin{proof}
Given any function $u\in\mathcal{H}^{s}(\Omega,\gamma)$ we consider the
extension $\widetilde{u}$ of $u$ by zero outside of $\Omega$. Since $\widetilde
{u}\in H^{s}(\mathbb{R}^{n},\gamma)$ and this last space
coincides with the Gaussian Besov space $B^{s}(\mathbb{R}^{n},\gamma)$ (see
\cite{Nikitin}), the embedding result contained in \cite[Theorem 23]{MaMi} yields
\begin{equation*}
 \int_{0}^{1/2}[(1-  \log r)^{s/2}u^{\circledast}(r)]^{2}\,dr\leq
 C\Vert\widetilde{u}\Vert^{2}_{H^{s}(\mathbb{R}^{n},\gamma)},
\end{equation*}
for some constant $C>0$. A change of variable and the monotonicity of the decreasing rearrangement $u^{\circledast}$ lead to
\begin{equation}
\int_{0}^{1}[(1-  \log r)^{s/2}u^{\circledast}(r)]^{2}\,dr
\leq 2 \int_{0}^{1/2}[(1-  \log r)^{s/2}u^{\circledast}(r)]^{2}\,dr
\leq 2C\Vert\widetilde{u}\Vert^{2}_{H^{s}(\mathbb{R}^{n},\gamma)}.\label{emblorentz2}
\end{equation}
Now we observe that the Exact Interpolation Theorem (see \cite[Theorem 7.23]{AdamsFourn}) implies that
extending any function $u\in\mathcal{H}^{s}(\Omega,\gamma)$ by zero
outside of $\Omega$ defines a continuous extension map between $\mathcal{H}^{s}%
(\Omega,\gamma)$ and $H^{s}(\mathbb{R}^{n},\gamma)$. Thus it follows that the norm at the right-hand side of
\eqref{emblorentz2} is bounded (up to a constant depending on $n,s$ and $\Omega$) by $\Vert u\Vert_{\mathcal{H}^{s}(\Omega,\gamma)}^2$ and the result follows.
\end{proof}

With these results at hand, we are able to show the generalization of the comparison result (Theorem \ref{primoteoremadiconfronto}) for $f$ in Zygmund spaces.

\begin{corollary}
Assume that $f\in L^2(\log L)^{-s/2}(\Omega,\gamma)$. Then Theorem \ref{primoteoremadiconfronto} still holds.
\end{corollary}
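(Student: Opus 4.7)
The plan is to extend the comparison from $f\in L^2(\Omega,\gamma)$ to $f\in L^2(\log L)^{-s/2}(\Omega,\gamma)$ by a density/approximation argument, using continuity of the solution operator together with stability of Gaussian rearrangements under limits. The key preliminary observation is that, thanks to the embedding $\mathcal{H}^s(\Omega,\gamma)\hookrightarrow L^2(\log L)^{s/2}(\Omega,\gamma)$ just established, duality gives
\[
L^2(\log L)^{-s/2}(\Omega,\gamma)\hookrightarrow\big(\mathcal{H}^s(\Omega,\gamma)\big)^{\prime},
\]
so the weak formulation of \eqref{Problema} (with the right-hand side replaced by the dual pairing, as in the Remark after Definition \ref{Defprobest}) admits unique solutions $u\in\mathcal{H}^s(\Omega,\gamma)$ and $\psi\in\mathcal{H}^s(\Omega^{\displaystyle\star},\gamma)$, obtained as traces of the unique weak solutions to the extension problems \eqref{Problema estensione}, \eqref{Problema estensione sim}. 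Moreover, the map $f\mapsto u$ is continuous from $(\mathcal{H}^s(\Omega,\gamma))^{\prime}$ into $\mathcal{H}^s(\Omega,\gamma)$, and hence, by the compact trace embedding (Proposition \ref{compactness}), into $L^2(\Omega,\gamma)$.

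First I would pick an approximating sequence. Set $f_k:=f\,\chi_{\{|f|\leq k\}}$; since $|f_k|\leq|f|$ and $f_k\to f$ a.e., each $f_k\in L^\infty(\Omega,\gamma)\subset L^2(\Omega,\gamma)$ and, by dominated convergence in the quasi-norm \eqref{qnorm},
\[
f_k\to f\quad\text{in }L^2(\log L)^{-s/2}(\Omega,\gamma),
\]
hence in $(\mathcal{H}^s(\Omega,\gamma))^{\prime}$. A standard property of decreasing rearrangements (monotone convergence of distribution functions under truncation) gives $f_k^{\circledast}\to f^{\circledast}$ a.e. on $(0,\gamma(\Omega))$ and, again by the dominated convergence theorem applied to the quasi-norm, $f_k^{\displaystyle\star}\to f^{\displaystyle\star}$ in $L^2(\log L)^{-s/2}(\Omega^{\displaystyle\star},\gamma)$.

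Next, let $u_k,\psi_k$ be the solutions to \eqref{Problema} and \eqref{problema simm} with data $f_k, f_k^{\displaystyle\star}$. Since $f_k\in L^2(\Omega,\gamma)$, Theorem \ref{primoteoremadiconfronto} applies and yields
\[
\int_0^r u_k^{\circledast}(\sigma)\,d\sigma\leq\int_0^r\psi_k^{\circledast}(\sigma)\,d\sigma,\qquad r\in[0,\gamma(\Omega)].
\]
By continuity of the solution operator, $u_k\to u$ and $\psi_k\to\psi$ in $L^2$ of the respective Gaussian spaces; this implies convergence of the one-dimensional rearrangements $u_k^{\circledast}\to u^{\circledast}$ and $\psi_k^{\circledast}\to\psi^{\circledast}$ in $L^2(0,\gamma(\Omega))$ (rearrangement is a contraction for $L^p$ distances on rearrangement-invariant norms; equivalently, the maps $v\mapsto v^{\circledast}$ are nonexpansive from $L^2$ to $L^2$). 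Consequently the running integrals converge uniformly in $r\in[0,\gamma(\Omega)]$, and the comparison passes to the limit, giving \eqref{confronto}.

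The main obstacle I foresee is the last technical point: ensuring the limit passage for the rearrangements is justified in the correct topology. Plain weak convergence of $u_k$ would not suffice because rearrangement is not weakly continuous, so it is important to use the strong $L^2$ convergence furnished by the compactness of the trace $\tr_\Omega$ on $H_{0,L}^1(\mathcal{C}_{\Omega},d\gamma(x)\otimes y^a dy)$ (Proposition \ref{compactness}), combined with the a priori bound
\[
\|u_k\|_{\mathcal{H}^s(\Omega,\gamma)}\leq C\|f_k\|_{(\mathcal{H}^s(\Omega,\gamma))^{\prime}}\leq C\|f_k\|_{L^2(\log L)^{-s/2}(\Omega,\gamma)},
\]
which is uniform in $k$ by the embedding. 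The analogous estimate on the half-space problem, together with the nonexpansivity of the rearrangement map on $L^2$, then closes the argument.
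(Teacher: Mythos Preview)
Your proposal is correct and follows essentially the same approximation strategy as the paper: approximate $f$ by data in $L^2(\Omega,\gamma)$, apply Theorem~\ref{primoteoremadiconfronto} to the approximants, and pass to the limit using a uniform energy bound plus the compact trace embedding of Proposition~\ref{compactness}. The paper uses smooth approximations whereas you use truncations, and you make the limit passage for the rearrangements more explicit via the $L^2$-nonexpansivity of $v\mapsto v^{\circledast}$; but the architecture of the argument is the same.
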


\begin{proof}
Let $f_{n}$ be a sequence of smooth function such that $f_{n}\rightarrow f$ strongly in
$L^{2}(\log L)^{-s/2}(\Omega,\gamma)$. Let $w_{n}$ be the unique
weak solution to problem \eqref{Problema estensione} with data $f_{n}$. By choosing $w_{n}$ as a test function in \eqref{weakform} we have
\begin{align*}
\iint_{\mathcal{C}_{\Omega}} y^{a}|\nabla_{x,y}w_{n}|^{2}\,d\gamma(x)\,dy
&= c_{s}^{-1}\int_{\Omega}f_{n}(x)w_{n}(x,0)\,d\gamma(x) \\
&\leq c_{s}^{-1}\|w_{n}(x,0)\|_{L^{2}(\log
L)^{s/2}(\Omega,\gamma)}\|f_{n}(x,0)\|_{L^{2}(\log L)^{-s/2}(\Omega,\gamma)}.
\end{align*}
Next we use \eqref{emblorentz} and the trace inequality \eqref{trace} to find
\[
\iint_{\mathcal{C}_{\Omega}} y^{a} |\nabla_{x,y}w_{n}|^{2}\,d\gamma(x)\,dy\leq C\|w_{n}\|_{H_{0,L}^{1}(\mathcal{C}_{\Omega},d\gamma(x)\otimes
y^{a}dy)}\,\|f_{n}(x,0)\|_{L^{2}(\log L)^{-s/2}(\Omega,\gamma)}.
\]
This allows us to extract a subsequence from $\left\{w_{n}\right\}$ (still labeled by $\left\{w_{n}\right\}$), such that
$w_{n}\rightharpoonup w$ weakly in $H_{0,L}^{1}(\mathcal{C}_{\Omega},d\gamma(x)\otimes y^{a}dy)$.
Then the compact embedding established in Proposition \ref{compactness} gives that,
up to a new subsequence, $w_{n}(\cdot,0)\rightarrow w(\cdot,0)$ strongly in $L^{2}(\Omega,\gamma)$.
Thus we can pass to the limit in the weak formulation \eqref{weakform} of $w_{n}$ and find that $w$ solves problem \eqref{Problema estensione}
corresponding to the data $f$. Thus $u:=w(\cdot,0)$ is the weak solution to problem \eqref{Problema}. In order to obtain the concentration inequality
\eqref{confronto}, we just observe that $f_{n}^{\displaystyle\star}$ approximates $f^{\displaystyle\star}$ in $L^{2}(\Omega^{\displaystyle\star},\gamma)$.
Then, if $\left\{w_{n}\right\}$ and $\left\{v_{n}\right\}$ are sequences of approximating solutions converging to $w$ and $v$ respectively, passing to the limit in the integral inequality
\begin{equation*}
\int_{0}^{s}w_{n}^{\circledast}(\sigma,0)\,d\sigma\leq\int_{0}^{s}v_{n}^{\circledast
}(\sigma,0)\,d\sigma,
\end{equation*}
we immediately get \eqref{confronto}.
\end{proof}

For the proof of Theorem \ref{thm:integrability} we need two further preliminary results, interesting in their own right. The following is a regularity
result for solutions of problems of the type \eqref{Problema} with \emph{rearranged} data, posed on the half-space $H$.

\begin{theorem}[Estimates for half-space solutions]\label{Regularity theorem}
Let $H=\{x\in\R^n:x_{1}>0\}$. Suppose that $h(x)=h^{\bigstar}(x)$, for all $x\in H$.
If $h\in L^{p}(\log L)^\alpha(H,\gamma)$ with $\alpha\in\R$ for $2<p<\infty$, and $\alpha\geq-\frac{s}{2}$ for $p=2$,
then the weak solution $\psi$ to
\begin{equation}\label{PPP}
\begin{cases}
\mathcal{L}^s\psi=h,&\hbox{in}~H,\\
\psi=0,&\hbox{on}~\partial H,
\end{cases}
\end{equation}
belongs to $L^{p}(\log L)^{\alpha+s}(H,\gamma)$ and
$$\|\psi\|_{L^{p}(\log L)^{\alpha+s}(H,\gamma)}\leq C\|h\|_{L^{p}(\log L)^\alpha(H,\gamma)},$$
for some constant $C=C(n,p,\alpha,s)>0$, which is independent on $\psi$ and $h$.
\end{theorem}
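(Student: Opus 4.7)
The strategy is to exploit the explicit fractional integral representation $\psi = \mathcal{L}^{-s}h$ from \eqref{eq:fractional integral}, together with the one-dimensional structure forced by the hypothesis $h = h^{\displaystyle\bigstar}$, to reduce the question to a sharp kernel estimate in one variable.

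First, since $h$ depends only on $x_1$, Proposition \ref{prop chiacchio} applied to the Cauchy--Dirichlet problem on $H$ guarantees that $e^{-t\mathcal{L}_H}h$ is rearranged for every $t \geq 0$; hence so is $\psi$. Using the odd reflection identities \eqref{eta}--\eqref{eq:eta}, the formula \eqref{eq:fractional integral} may be rewritten as
\begin{equation*}
\psi(x_1) = \frac{1}{\Gamma(s)}\int_0^\infty t^{s-1}\int_0^\infty \bigl[M_t(x_1,y_1) - M_t(x_1,-y_1)\bigr] h(y_1)\,d\gamma_1(y_1)\,dt,
\end{equation*}
where $M_t$ is the one-dimensional Mehler kernel. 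Fubini's theorem then produces the kernel representation $\psi(x_1) = \int_0^\infty \mathcal{K}_s(x_1,y_1) h(y_1)\,d\gamma_1(y_1)$ for a positive kernel $\mathcal{K}_s$ that serves as the Gaussian analogue of a Riesz potential on the half-line.

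Next, I would split the time integral at $t = 1$. The long-time contribution would be handled via the Dirichlet spectral gap on $(0,\infty)$ (first eigenvalue $\lambda_1 = 1$, corresponding to the odd Hermite polynomial $\psi_1(x_1) = x_1$) together with hypercontractivity of the OU semigroup, which yields $\|e^{-t\mathcal{L}_H}h\|_{L^\infty(H,\gamma)} \leq Ce^{-t}\|h\|_{L^2(H,\gamma)}$; the integrability of $t^{s-1}e^{-t}$ on $(1,\infty)$, combined with Zygmund embeddings on sets of finite Gaussian measure, controls this contribution by $\|h\|_{L^p(\log L)^\alpha(H,\gamma)}$. The short-time contribution is the delicate part: the goal is to use the Gaussian off-diagonal decay of $M_t(x,y) - M_t(x,-y)$, together with the weight $t^{s-1}$, to obtain (after the isoperimetric change of variables $r = \Phi(x_1)$) a pointwise rearrangement estimate of the form
\begin{equation*}
\psi^{\circledast}(r) \leq C\int_r^{\gamma(H)} \frac{h^{\circledast}(\sigma)}{\sigma\,(1-\log\sigma)^{1-s}}\,d\sigma + C\,\|h\|_{L^p(\log L)^\alpha(H,\gamma)},
\end{equation*}
which reflects the Gaussian isoperimetric profile. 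Once such a bound is available, the target estimate follows at once from the classical Hardy-type inequality in Zygmund spaces (see \cite{Bennett}).

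The main obstacle is calibrating the Mehler kernel estimates at short times to extract exactly $s$ logarithms of gain. An alternative, possibly cleaner, route is to interpolate between the known local result $s = 1$ (contained in \cite{dFP}) and the trivial case $s = 0$ (identity), using the convex structure $\mathcal{L}^{-s} = \Gamma(s)^{-1}\int_0^\infty t^{s-1} e^{-t\mathcal{L}_H}\,dt$, the interpolation identity \eqref{Gaussianintsp}, and Calder\'on's theorem for rearrangement-invariant Banach function spaces. This would also explain naturally the borderline condition $\alpha \geq -s/2$ at $p = 2$, which matches the embedding $\mathcal{H}^s(\Omega,\gamma) \hookrightarrow L^2(\log L)^{s/2}(\Omega,\gamma)$ established in the preceding lemma and is precisely what allows $h$ to be paired with the solution in the weak formulation \eqref{weakform} at the endpoint.
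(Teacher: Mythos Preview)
Your proposal takes a substantially different route from the paper, and as written it has a genuine gap.

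The paper's proof is a two-line reduction: using the odd extension $\widetilde{h}$ from \eqref{est1}--\eqref{eta}, one has
\[
\psi(x)=\frac{1}{\Gamma(s)}\int_0^\infty e^{-t\mathcal{L}_H}h(x)\,\frac{dt}{t^{1-s}}
=\frac{1}{\Gamma(s)}\int_0^\infty e^{-t\mathcal{L}}\widetilde{h}(x)\,\frac{dt}{t^{1-s}}
=\mathcal{L}^{-s}\widetilde{h}(x),
\]
so the half-space problem is converted into the \emph{full-space} fractional OU integral, and the Zygmund estimate is then quoted directly from the extrapolation theory of Karadzhov--Milman \cite[Theorem~5.7]{KM}. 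You wrote down the odd reflection and the one-dimensional kernel, but you did not take the step of passing back to the whole-line operator $\mathcal{L}^{-s}$; that is exactly the move that avoids all the hand work you then set yourself.

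As for your direct approach, the long-time piece via the spectral gap is fine, but the short-time piece is left as a declared ``obstacle'' rather than an argument: the pointwise rearrangement bound you display, with the logarithmic Riesz-type kernel $\sigma^{-1}(1-\log\sigma)^{-(1-s)}$, is precisely the nontrivial content and is not derived. (The Appendix of the paper does carry out a kernel-splitting argument in the spirit of yours, but only to recover the plain $L^p$ estimate without the logarithmic gain; extracting the extra $(\log)^s$ factor this way requires a genuinely sharper analysis than you have sketched.) Your proposed interpolation alternative between $s=0$ and $s=1$ is also not a proof as stated: $s\mapsto\mathcal{L}^{-s}$ is not a complex interpolation family in the obvious sense, and Calder\'on's theorem for rearrangement-invariant spaces does not by itself supply the $\alpha\mapsto\alpha+s$ shift in the Zygmund scale. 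In short, the missing idea is the reduction to the global operator and the invocation of \cite{KM}; without it, the heart of the estimate remains unproven.
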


\begin{proof}
By \eqref{eq:fractional integral} and \eqref{est1}--\eqref{eta} we can write
$$\psi(x)=\frac{1}{\Gamma(s)}\int_0^\infty e^{-t(\mathcal{L}_H)}h(x)\,\frac{dt}{t^{1-s}}=
\frac{1}{\Gamma(s)}\int_0^\infty e^{-t\mathcal{L}}\widetilde{h}(x)\,\frac{dt}{t^{1-s}}=\mathcal{L}^{-s}\widetilde{h}(x).$$
Then the estimate follows from \cite[Theorem~5.7]{KM}.
\end{proof}

The next Lemma is a useful comparison principle for solutions of problems of the form \eqref{Problema} with rearranged data, having as a ground domain an
half-space of Gaussian measure larger than $1/2$.

\begin{lemma}[Comparison of half-space solutions]\label{semispazi}
Let $H_{\omega}=\{x\in\mathbf{\mathbb{R}}^{n}:x_{1}>\omega\}$, for some $\omega>0$.
Let $h\in L^{p}(\log L)^\alpha(H,\gamma)$ be a nonnegative function such that $h(x)=h^{\bigstar}(x)$ and let $\psi$ be the weak
solution to
$$\begin{cases}
\mathcal{L}^{s}\psi=h,& \hbox{in}~H_{\omega},\\
\psi=0,&\hbox{on}~\partial H_{\omega}.
\end{cases}$$
Then
$$\psi(x) \leq \zeta(x), \quad\hbox{for a.e.}~x\in H_{\omega},$$
where $\zeta$ is the weak solution to \eqref{PPP} with datum $\overline{h}$,
where $\overline{h}$ denotes the zero extension of $h$ in $H\setminus H_{\omega}$.
\end{lemma}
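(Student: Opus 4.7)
The plan is to use the subordination representation \eqref{eq:fractional integral} of both $\psi$ and $\zeta$ and then reduce the claim to a pointwise comparison of the OU heat semigroups with Dirichlet boundary conditions on the nested domains $H_{\omega}\subset H$.

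First I would write
\[
\psi(x)=\frac{1}{\Gamma(s)}\int_{0}^{\infty} e^{-t\mathcal{L}_{H_{\omega}}}h(x)\,\frac{dt}{t^{1-s}},\qquad \zeta(x)=\frac{1}{\Gamma(s)}\int_{0}^{\infty} e^{-t\mathcal{L}_{H}}\overline{h}(x)\,\frac{dt}{t^{1-s}},
\]
for $x\in H_{\omega}$. Since the Bochner subordination weight $t^{-(1-s)}/\Gamma(s)$ is strictly positive, the lemma will follow from the pointwise semigroup inequality
\[
e^{-t\mathcal{L}_{H_{\omega}}}h(x)\leq e^{-t\mathcal{L}_{H}}\overline{h}(x),\qquad x\in H_{\omega},\ t>0.
\]

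To establish this inequality, set $u_{1}(x,t):=e^{-t\mathcal{L}_{H_{\omega}}}h(x)$ and $u_{2}(x,t):=e^{-t\mathcal{L}_{H}}\overline{h}(x)$. Because $\overline{h}\geq 0$ on $H$ and $u_{2}$ vanishes on $\partial H$, the parabolic maximum principle applied to $u_{2}$ as a solution of \eqref{p2} on $H\times(0,\infty)$ gives $u_{2}\geq 0$ on $\overline{H}\times[0,\infty)$; in particular $u_{2}\geq 0$ on the lateral boundary $\partial H_{\omega}\times[0,\infty)$. The difference $d:=u_{2}-u_{1}$ then solves $d_{t}+\mathcal{L}d=0$ in $H_{\omega}\times(0,\infty)$ with $d(\cdot,0)\equiv 0$ on $H_{\omega}$ (because $h=\overline{h}$ there) and $d\geq 0$ on $\partial H_{\omega}\times[0,\infty)$. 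The weak parabolic maximum principle for the OU Dirichlet problem on $H_{\omega}$ thus forces $d\geq 0$, which is precisely the pointwise inequality above. Multiplying by $t^{-(1-s)}/\Gamma(s)$ and integrating in $t>0$ yields $\psi(x)\leq \zeta(x)$ for a.e.\ $x\in H_{\omega}$.

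The main obstacle is making this parabolic comparison fully rigorous in the unbounded Gaussian setting with a datum that only lies in $L^{p}(\log L)^{\alpha}(H,\gamma)$, where both $u_{1}$ and $u_{2}$ need not be classical and one must control their growth at spatial infinity. This will be handled by a standard approximation: replace $h$ by a sequence $\{h_{k}\}$ of bounded, smooth, rearranged functions with compact support in $H_{\omega}$ converging to $h$ in $L^{p}(\log L)^{\alpha}(H,\gamma)$; apply the clean comparison to the resulting smooth, spatially decaying solutions $\psi_{k}\leq\zeta_{k}$; and pass to the limit using the continuity of the solution map from Theorem \ref{Regularity theorem} together with dominated convergence in the subordination integral. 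Alternatively, one could work directly with the Caffarelli--Silvestre-type extensions \eqref{Problema estensione}--\eqref{Problema estensione sim} and apply the elliptic maximum principle to the difference of extensions on $H_{\omega}\times(0,\infty)$ with degenerate weight $y^{a}\varphi(x)$; the boundary analysis at the lateral face $\partial H_{\omega}\times[0,\infty)$ is analogous.
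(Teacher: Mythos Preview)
Your proposal is correct and follows essentially the same route as the paper: both argue via the parabolic maximum principle that the difference $e^{-t\mathcal{L}_H}\overline{h}-e^{-t\mathcal{L}_{H_\omega}}h$ is nonnegative on $H_\omega\times[0,\infty)$, and then pass to the fractional level through the semigroup representation (the paper phrases the last step via the extension formula \eqref{StingaTorreasemigr} and sets $y=0$, which amounts to your direct use of \eqref{eq:fractional integral}). Your added remarks on justifying nonnegativity of $u_2$ on $\partial H_\omega$ and on approximating rough data make explicit what the paper leaves as ``a standard maximum principle argument.''
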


\begin{proof}
The function
\[
F(x,t):=e^{-t(\mathcal{L}_{H})}\overline{h}(x)-e^{-t(\mathcal{L}_{H_{\omega}})}h(x),
\]
solves the initial boundary value problem
$$\begin{cases}
\partial_tF=\Delta F-x\cdot\nabla F,&\hbox{in}~H_\omega\times(0,\infty), \\
F(x,t)\geq0,&\hbox{on}~\partial H_{\omega}\times(0,\infty), \\
F(x,0)=0,&\hbox{on}~H_{\omega}.\\
\end{cases}$$
Thus, by a standard maximum principle argument, $F\geq0$ in $H_{\omega}\times[0,\infty)$.
In other words,
$$e^{-t(\mathcal{L}_{H})}\overline{h}\geq e^{-t(\mathcal{L}_{H_{\omega}})}h,\quad
\hbox{for all}~x\in H_{\omega},~t\geq0.$$
Therefore, if $v$ and $\overline{v}$ denote the extensions as in
\eqref{StingaTorreasemigr} of $\psi$ and $\zeta$, respectively, then
$$\overline{v}(x,y)\geq v(x,y),\quad\hbox{for all}~x\in H_{\omega},~y\geq0.$$
The result follows by taking $y=0$ in this last inequality.
\end{proof}

Now we are finally able to present the proof of the regularity estimate, namely, Theorem \ref{thm:integrability}.

\begin{proof}[Proof of Theorem \ref{thm:integrability}]
Let $u$ be the weak solution to \eqref{Problema} defined in an open set $\Omega$
such that $\gamma(\Omega)\leq 1/2$, with corresponding datum $f$. By Theorem
\ref{primoteoremadiconfronto}, $u$ is less concentrated than the solution $\psi$ to \eqref{problema simm} defined in the half-space with the
same Gauss measure as $\Omega$ and datum $f^{\displaystyle\star}$. If $\gamma(\Omega)=1/2$ the assertion follows by Theorem \ref{Regularity theorem}. If
$\gamma(\Omega)<1/2$,
we first apply Lemma \ref{semispazi} to estimate $\psi$ in terms of the solution $\zeta$ to \eqref{PPP} defined in the
half-space $H=\{x\in\mathbf{\mathbb{R}}^{n}:x_{1}>0\}$ and having the extension of
$f^{\displaystyle\star}$ by zero to $H$ at the right-hand side.. Then Theorem \ref{Regularity theorem} allows us to conclude.
\end{proof}

\begin{remark}
We remark that other regularity results for problems involving fractional operators with bounded lower order terms,
but posed on bounded smooth domains, are contained in \cite{Grubb}.
\end{remark}

%%%%%%%%%%%%%%%%%%%%%%%%%%%%%%%%%%%%%%%%%%%%%%
\section{Appendix: A semigroup method proof of the $L^p$ estimate}\label{Appendix}
%%%%%%%%%%%%%%%%%%%%%%%%%%%%%%%%%%%%%%%%%%%%%%

For completeness and convenience of the reader, we give an alternative and more explicit proof of Theorem \ref{Regularity theorem} with $L^p$ data using the Mehler kernel to represent the inverse of the fractional
OU operator. Observe that such result is a particular case of Theorem \ref{Regularity theorem} since, when $f\in L^{p}(\Omega,\gamma)$, Theorem
\ref{Regularity theorem} and the embedding \eqref{secondembed} give $u\in L^{p}(\log L)^{s}(\Omega,\gamma)\subset L^{p}(\Omega,\gamma)$.

\begin{theorem}[Estimates for half-space solutions with $L^{p}$ data]\label{Regularity L^{p} theorem}
Let $H=\{x\in\R^n:x_{1}>0\}$. Suppose that $h(x)=h^{\bigstar}(x)$, for all $x\in H$. If $h\in L^{p}(H,\gamma)$, for $2\leq p<\infty$,
then the weak solution $\psi$ to \eqref{PPP} belongs to $L^{p}(H,\gamma)$ and
$$\|\psi\| _{L^{p}(H,\gamma)}\leq C\|h\|_{L^{p}(H,\gamma)},$$
for some constant $C=C(n,p,s)>0$, which is independent on $\psi$ and $h$.
\end{theorem}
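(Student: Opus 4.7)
I would reduce the half-space problem to a whole-space semigroup estimate via odd reflection, then split the representation integral of $\mathcal{L}^{-s}$ into a small-time piece handled by $L^p$-contractivity and a large-time tail requiring exponential decay produced by the zero-mean condition. Combining \eqref{eq:fractional integral} with \eqref{est1}--\eqref{eta},
\[\psi(x) = \mathcal{L}^{-s}\widetilde h(x)\big|_H = \frac{1}{\Gamma(s)}\int_0^\infty e^{-t\mathcal{L}}\widetilde h(x)\,\frac{dt}{t^{1-s}},\]
where $\widetilde h$ denotes the odd extension of $h$ across $\partial H$, so that $\int_{\R^n}\widetilde h\,d\gamma = 0$. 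By \eqref{norma prolungamento} and Minkowski's integral inequality it suffices to bound $\int_0^\infty \|e^{-t\mathcal{L}}\widetilde h\|_{L^p(\R^n,\gamma)}\,t^{s-1}\,dt$. On $(0,1]$ the $L^p$-contractivity \eqref{Stima Lp semigruppo} combined with the integrability of $t^{s-1}$ at the origin yields a contribution bounded by $C\|\widetilde h\|_{L^p(\R^n,\gamma)}$.

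The heart of the argument is the tail bound
\[\|e^{-t\mathcal{L}}\widetilde h\|_{L^p(\R^n,\gamma)} \leq Ce^{-t}\|\widetilde h\|_{L^p(\R^n,\gamma)},\quad t\geq 1,\]
since then $\int_1^\infty e^{-t}t^{s-1}\,dt<\infty$ closes the proof. Without the zero-mean condition this decay would fail, since $\mathcal{L}^{-s}$ is ill-defined on constants, so the oddness of $\widetilde h$ must enter the estimate. I would exploit it through the reflected-kernel formula, valid for $x\in H$,
\[e^{-t\mathcal{L}}\widetilde h(x) = \int_H K_t(x,y)\,h(y)\,d\gamma(y),\qquad K_t(x,y) := M_t(x,y) - M_t(x,\bar y),\quad \bar y = (-y_1,y'),\]
and a direct computation from \eqref{eq:Mehler kernel} yields
\[K_t(x,y) = \frac{2}{(1-e^{-2t})^{n/2}}\,\sinh\!\left(\frac{e^{-t}x_1 y_1}{1-e^{-2t}}\right)\exp\!\left(-\frac{e^{-2t}(|x|^2+|y|^2)-2e^{-t}\langle x',y'\rangle}{2(1-e^{-2t})}\right),\]
so the $\sinh$ factor is precisely the source of the $O(e^{-t})$ gain for $t\geq 1$.

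The main obstacle will be a Schur-type kernel estimate that converts this pointwise identity into a bound uniform for $t\geq 1$. Applying $\sinh(u)\leq u\,e^{u}$ produces the desired $e^{-t}$ prefactor, but the remaining factor $e^{e^{-t}x_1y_1/(1-e^{-2t})}$ grows in $(x_1,y_1)$, so one must verify that, after multiplication by the Gaussian weights $\varphi(x)\varphi(y)$ arising from the two copies of $d\gamma$ against which Schur's test is formulated, the full quadratic form in the exponent is still negative definite for all $t\geq 1$. Once this positivity is checked, Schur's test produces the decay $\|e^{-t\mathcal{L}}\widetilde h\|_{L^p(\R^n,\gamma)} \leq Ce^{-t}\|\widetilde h\|_{L^p(\R^n,\gamma)}$, and combining with the small-time bound gives the desired estimate. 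I expect this kernel-balancing step --- showing that the Gaussian tails of $d\gamma\otimes d\gamma$ absorb the Mehler-kernel growth uniformly down to $t=1$ --- to be the only nontrivial technical point of the proof.
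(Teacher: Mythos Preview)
Your overall architecture---odd reflection to the full line, small-time control via the $L^p$-contractivity \eqref{Stima Lp semigruppo}, and a large-time tail bound exploiting the zero mean of $\widetilde h$---is the right one, and the decay $\|e^{-t\mathcal{L}}\widetilde h\|_{L^p(\R^n,\gamma)}\leq C_p\,e^{-t}\|\widetilde h\|_{L^p(\R^n,\gamma)}$ for $t\geq 1$ is in fact true. The gap is in the mechanism you propose for it. An unweighted Schur test cannot deliver a $Ce^{-t}$ operator norm: since $M_t(x_1,\cdot)\varphi(\cdot)$ is the density of $N(e^{-t}x_1,1-e^{-2t})$, one computes directly
\[
\int_0^\infty K_t(x_1,y_1)\,d\gamma(y_1)=\operatorname{erf}\!\Big(\frac{e^{-t}x_1}{\sqrt{2(1-e^{-2t})}}\Big),
\]
which tends to $1$ as $x_1\to\infty$ for every fixed $t$. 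The bound $\sinh(u)\leq u\,e^{u}$ does extract a factor $e^{-t}x_1y_1$, but after integrating in $y_1$ against $d\gamma$ the result still grows like $x_1$; negative-definiteness of the quadratic exponent guarantees only that the integral converges, not that it is uniformly small. The cancellation in $K_t$ becomes effective only once $e^{-t}x_1\lesssim 1$, so a fixed cutoff at $t=1$ cannot capture it uniformly in $x_1$.

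The paper confronts precisely this difficulty by integrating in $t$ first and splitting the resulting Green kernel $G=\int_0^\infty K_t\,t^{s-1}\,dt$ at a \emph{variable} level $T(x_1,y_1)=\max\{c(p),\log(x_1^2+y_1^2)\}$. For $t>T$ one has $e^{-t}(x_1^2+y_1^2)<1$, so the Taylor bound $|K_t|\leq Ce^{-t}(x_1^2+y_1^2)$ is legitimate and integrates to something bounded; on the intermediate range $c(p)<t<T$ the crude estimate $|K_t|\leq C\exp\big(2e^{-c(p)}(x_1^2+y_1^2)\big)$ is absorbed by the Gaussian weight provided $c(p)>\log(4p)$, and the two pieces are then controlled by showing the time-integrated kernels lie in $L^p(H\times H,\gamma\otimes\gamma)$ and applying H\"older. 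If you wish to keep your fixed-time decomposition, the clean repair is to replace the Schur argument by hypercontractivity: for $p\geq 2$, setting $t_0=\tfrac12\log(p-1)$, Nelson's theorem gives $\|e^{-t_0\mathcal{L}}\|_{L^2\to L^p}\leq 1$, whence for $t\geq t_0$
\[
\|e^{-t\mathcal{L}}\widetilde h\|_{L^p(\gamma)}\leq\|e^{-(t-t_0)\mathcal{L}}\widetilde h\|_{L^2(\gamma)}\leq e^{-(t-t_0)}\|\widetilde h\|_{L^2(\gamma)}\leq e^{-(t-t_0)}\|\widetilde h\|_{L^p(\gamma)},
\]
the middle inequality being the $L^2$ spectral gap on odd functions.
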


\begin{proof}
The proof will be split in four steps.
\newline
\noindent\textbf{Step 1. The explicit solution via the semigroup kernel.}
By \eqref{eq:fractional integral}, and by using an abuse of notation, the solution $\psi$ to \eqref{PPP} can be written as
$$\psi(x)=\psi(x_{1})=\frac{1}{\Gamma(s)}
\int_0^\infty e^{-t(\mathcal{L}_H)}h(x)\,\frac{dt}{t^{1-s}}
=\int_{0}^{\infty}G(x_{1},y_{1})h(y_{1})\,d\gamma(y_{1}),$$
where (see \eqref{eq:eta})
$$G(x_{1},y_{1})=\frac{1}{\Gamma(s)}\int_{0}^{\infty}[M_{t}(x_{1},y_{1})-M_{t}(x_{1},-y_{1})]\,\frac{dt}{t^{1-s}}.$$
Next we write
\begin{equation}\label{G}
\begin{aligned}
G(x_{1},y_{1})&=\int_{0}^{c(p)}\cdots\,dt+\int_{c(p)}^{T(x_{1},y_{1})}\cdots\,dt+\int_{T(x_{1},y_{1})}^{\infty}\cdots\,dt \\
&=:G_{1}(x_{1},y_{1})+G_{2}(x_{1},y_{1})+G_{3}(x_{1},y_{1}),
\end{aligned}
\end{equation}
with $c(p)>1$ a suitable constant, and $T(x_{1},y_{1})=\max\{c(p),\log\left(
x_{1}^{2}+y_{1}^{2}\right)  \}.$
It follows that
\begin{equation}\label{aaaa}
\|\psi\|_{L^{p}(H,\gamma)}^{p}
\leq\sum_{j=1}^3\int_{0}^{\infty}\left(\int_{0}^{\infty}G_{j}(x_{1},y_{1})h(y_{1})\,d\gamma(y_{1})\right)^{p}d\gamma(x_{1}).
\end{equation}
\noindent\textbf{Step 2. Estimate of the term $j=1$ in \eqref{aaaa}.} We observe that by
\eqref{Stima Lp semigruppo} and \eqref{norma prolungamento} we get
\begin{equation}\label{ssm}
  \left\Vert \int_{-\infty}^{\infty}M_{t}(x_{1},y_{1})\widetilde
{h}(y_{1})\,d\gamma(y_{1})\right\Vert _{L^{p}(\R,\gamma)}
  \leq\|\widetilde{h}\|_{L^{p}(\mathbb{R},\gamma)}
=2\left\Vert h\right\Vert _{L^{p}(H,\gamma)},
\end{equation}
where $\widetilde{h}$ is defined like in \eqref{est1}. Tonelli's theorem,
Minkowski's inequality and (\ref{ssm}) yield%
\begin{align*}
&  \left\Vert \int_{0}^{\infty}G_{1}(x_{1},y_{1})h(y_{1})\,d\gamma(y_{1})\right\Vert _{L^{p}(H,\gamma)}\\
&  \leq c_s\int_{0}^{c(p)}\left\Vert \int_{0}^{\infty}\left[
M_{t}(x_{1},y_{1})-M_{t}(x_{1},-y_{1})\right]
h(y_{1})\,d\gamma(y_{1})\right\Vert _{L^{p}(H,\gamma)}\frac{dt}{t^{1-s}}\\
&=c_s\int_{0}^{c(p)}\left\Vert \int_{-\infty}^{\infty}M_{t}(x_{1},y_{1})\widetilde{h}(y_{1})\,d\gamma(y_{1})\right\Vert
_{L^{p}(\R,\gamma)}\frac{dt}{t^{1-s}} \\
&\leq 2c_s\left\Vert h\right\Vert _{L^{p}(H,\gamma)}\int_{0}^{c(p)}\frac{dt}{t^{1-s}}
=c_{s,p}\|h\|_{L^p(H,\gamma)}.
\end{align*}
\noindent\textbf{Step 3. Estimate of $G_{2}$ and $G_{3}$.} We prove that
$$\int_0^\infty\left(\int_0^\infty G_{j}^{p'}(x_{1},y_{1})\,d\gamma(y_{1})\right)^{p/p'}
d\gamma(x_{1})<\infty,\quad\hbox{for}~j=2,3.$$
By Jensen's inequality, it is enough to show that $G_{j}\in L^{p}(H\times H,\gamma\otimes\gamma)$, for $j=2,3$.
If $t>c(p)>1$ then $(1-e^{-2t})\sim1$ and $\left\vert M_{t}(x_{1},y_{1})\right\vert\leq
c\exp\left(  4e^{-t}\left\vert x_{1}\right\vert\left\vert y_{1}\right\vert \right)$, see \eqref{eq:Mehler kernel}.
It follows that
\begin{align*}
\left\vert G_{2}(x_{1},y_{1})\right\vert
&\leq c_s\int_{c(p)}^{T(x_{1},y_{1})}\vert M_{t}(x_{1},y_{1})-M_{t}(x_{1},-y_{1})\vert\,\frac{dt}{t^{1-s}} \\
& \leq \frac{c_s}{c(p)^{1-s}}\int_{c(p)}^{T(x_{1},y_{1})}\exp(4e^{-t}|x_{1}||y_{1}|)\,dt \\
& \leq \frac{c_s}{c(p)^{1-s}}\int_{c(p)}^{T(x_{1},y_{1})}\exp\left[2e^{-c(p)}(x_{1}^{2}+y_{1}^{2})\right]\,dt \\
& \leq\frac{c_s}{c(p)^{1-s}}\cdot\frac{T(x_{1},y_{1})}{\left(  \varphi(x_{1})\right)  ^{4e^{-c(p)}}
\left(\varphi(y_{1})\right)  ^{4e^{-c(p)}}}=:\widetilde{G}_{2}(x_{1},y_{1}).
\end{align*}
We then get $\widetilde{G}_{2}(x_{1},y_{1})\in L^{p}(H\times H,\gamma\otimes\gamma)$
if we choose $4pe^{-c(p)}<1$, that is, if $c(p)>\max\{1,\log(4p)\}$. Moreover, by Taylor's formula
and using that $t>1$ and $e^{-t}(|x_1|^2+|y_1|^2)<1$,
\begin{align*}
M_{t}(x_{1},y_{1})-M_{t}(x_{1},-y_{1}) &\leq C_n
\Big|\exp\Big(\frac{e^{-t}\langle x_1,y_1\rangle}{1-e^{-2t}}\Big)-\exp\Big(-\frac{e^{-t}\langle x_1,y_1\rangle}{1-e^{-2t}}\Big)\Big| \\
&\leq Ce^{-t}|\langle x_1,y_1\rangle|\exp(ce^{-t}\langle x_1,y_1\rangle) \\
&\leq Ce^{-t}(|x_1|^2+|y_1|^2)\exp(ce^{-t}(|x_1|^2+|y_1|^2)) \\
&\leq Ce^{-t}(|x_1|^2+|y_1|^2).
\end{align*}
Then
\begin{align*}
\left\vert G_{3}(x_{1},y_{1})\right\vert  &  \leq C_s\int_{T(x_{1},y_{1})}^{\infty}
\left\vert M_{t}(x_{1},y_{1})-M_{t}(x_{1},-y_{1})\right\vert\,\frac{dt}{t^{1-s}}
\leq C_{n,s}\big(|x_1|^2+|y_1|^2\big)\int_{T(x_{1},y_{1})}^{+\infty}e^{-t}dt\\
&  =C_{n,s}\big(\left\vert x_{1}\right\vert ^{2}+\left\vert y_{1}\right\vert^{2}\big)
e^{-T(x_{1},y_{1})}\leq C_{n,s}\in L^{p}(H \times H,\gamma\otimes\gamma).
\end{align*}
\noindent\textbf{Step 4. Estimates of the terms $j=2,3$ in \eqref{aaaa}.}
By H\"{o}lder's inequality and the estimates of Step 3, we get
\begin{align*}
&\int_{0}^{\infty}\left(\int_{0}^{\infty}G_{j}(x_{1},y_{1})h(y_{1})\,d\gamma(y_{1})\right)^{p}d\gamma(x_{1})\\
&  \leq\int_{0}^{\infty}\left(\int_{0}^{\infty}G_{j}^{p'}(x_{1},y_{1})\,d\gamma(y_{1})\right)^{p/p'}\left(
\int_{0}^{\infty}|h(y_{1})|^p\,d\gamma(y_{1})\right)d\gamma(x_{1})\leq
c\left\Vert h\right\Vert _{L^{p}(H,\gamma)}^{p},
\end{align*}
for $j=2,3$ and for some positive constant $c=c(n,p,s)$.\\
Hence the desired result follows by collecting Steps 2 and 4 in estimate \eqref{aaaa}.
\end{proof}

\bigskip
\noindent\textbf{Acknowledgements.} Research partially supported by GNAMPA of INdAM, ``Programma
triennale della Ricerca dell'Universit\`{a} degli Studi di Napoli "Parthenope"
- Sostegno alla ricerca individuale 2015-2017" (Italy) and
by Grant MTM2015-66157-C2-1-P form Government of Spain.

%%%%%%%%%%%%%%%%%%%%%%%%%%%%%%%%%%%%%%%%%%%%%%

%%%%%%%%%%%%%%%%%%%%%%%%%%%%%%%%%%%%%%%%%%%%%%

%%%%%%%%%%%%%%%%%%%%%%%%%%%%%%%%%%%%%%%%%%%%%%
\end{document}